\DeclareMathAlphabet{\mathpzc}{OT1}{pzc}{m}{it}
\newtheorem{theorem}{Theorem}[section]
\newtheorem*{claim*}{Claim}
\newtheorem{lemma}[theorem]{Lemma}
\newtheorem{lem}[theorem]{Lemma}
\newtheorem{cor}[theorem]{Corollary}
\newtheorem{prop}[theorem]{Proposition}
\newtheorem{thm}[theorem]{Theorem}
\theoremstyle{definition}
\newtheorem{definition}[theorem]{Definition}
\theoremstyle{remark}
\newtheorem{Rmk}[theorem]{Remark}
\newtheorem{Q}[theorem]{Open problem}
\numberwithin{equation}{section}
\newcommand{\op}{\operatorname}
\newcommand{\bb}{\mathbb}
\newcommand{\Ga}{\Gamma}
\newcommand{\ga}{\gamma}
\newcommand{\la}{\lambda}
\newcommand{\La}{\Lambda}
\newcommand{\ba}{\backslash}
\newcommand{\cal}{\mathcal}
\newcommand{\br}{\mathbb R}
\newcommand{\SO}{\op{SO}}
\newcommand{\SU}{\op{SU}}
\newcommand{\Sp}{\op{Sp}}
\newcommand{\G}{\Gamma}
\newcommand{\m}{\mathsf{m}}
\newcommand{\T}{\op{T}}\newcommand{\F}{\cal{F}}
\renewcommand{\frak}{\mathfrak}
\newcommand{\e}{\epsilon}
\newcommand{\be}{\begin{equation}}
\newcommand{\ee}{\end{equation}}
\newcommand{\BR}{\op{BR}}
\newcommand{\inte}{\op{int}}
\renewcommand{\L}{\mathcal L}
\newcommand{\fa}{\mathfrak a}
\newcommand{\fg}{\mathfrak{g}}
\newcommand{\fp}{\mathfrak{p}}
\renewcommand{\T}{\mathsf{T}}
\newcommand{\fk}{\mathfrak{k}}
\newcommand{\N}{\mathbb N}
\renewcommand{\e}{\varepsilon}
\renewcommand{\epsilon}{\varepsilon}
\newcommand{\pc}{P^{\circ}}
\newcommand{\E}{\mathcal E}
\newcommand{\Om}{\Omega}
\renewcommand{\u}{\mathsf u}
\newcommand{\R}{\cal R}
\renewcommand{\v}{\mathsf v}
\newcommand{\PSL}{\op{PSL}}
\renewcommand{\r}{\mathsf r}
\renewcommand{\d}{\mathsf d}
\newcommand{\tOm}{\tilde{\Omega}}
\newcommand{\injrad}{\operatorname{rad_{\mathrm{inj}}}}
\begin{document}

\title[Invariant measures and rank dichotomy]
{Horospherical invariant measures and a rank dichotomy for Anosov groups}
\author{Or Landesberg}
\address{Mathematics department, Yale university, New Haven, CT 06520}
\email{or.landesberg@yale.edu}
\author{Minju Lee}
\address{Mathematics department, Yale university, New Haven, CT 06520, and Department of Mathematics, University of Chicago, Chicago, IL 60637 (current address)}
\email{minju1@uchicago.edu}
\author{Elon Lindenstrauss}
\address{Mathematics department, Hebrew University, Jerusalem, Israel}
\email{elon@math.huji.ac.il}
\author{Hee Oh}
\address{Mathematics department, Yale university, New Haven, CT 06520}
\email{hee.oh@yale.edu}
\thanks{This work was partially supported by ISF-Moked grant 2095/19 (Landesberg), ERC 2020 grant no.~833423 (Lindenstrauss) and the NSF grant 0003086 (Oh)}
\begin{abstract}
Let $G=\prod_{i=1}^{\mathsf r} G_i$ be a product of simple real algebraic groups of rank one and  $\Gamma$ an Anosov subgroup of $G$  with respect to a minimal parabolic subgroup.
 For
 each $\v$ in the interior of a positive Weyl chamber, let $\mathcal R_\v\subset \Gamma\backslash G$ denote the Borel subset of all points 
with recurrent $\exp (\mathbb R_+ \v)$-orbits.   
For a maximal horospherical subgroup $N$ of $G$, we show that the $N$-action on ${\mathcal R}_\v$ is uniquely ergodic if $\mathsf r=\op{rank}(G)\le 3$ and $\v$  belongs to the interior of the limit cone of $\Gamma$,
and that there exists no $N$-invariant {Radon} measure on $\mathcal R_\v$  otherwise.
\end{abstract}

\maketitle

\tableofcontents
\section{Introduction}
Let $G$ be a connected semisimple real algebraic group, and $\Gamma<G$ be a Zariski dense discrete subgroup.
Let $N$ be a maximal horospherical subgroup of $G$, which is unique up to conjugation.
We are interested in the study of $N$-invariant ergodic Radon measures on the quotient space $\Gamma\ba G$ (from now on, all measures we will consider are implicitly assumed to be Radon measures).
When $\Gamma$ is a uniform lattice in $G$, 
the $N$-action on $\Gamma\ba G$ is known to be uniquely ergodic, that is,
 there exists  a unique  $N$-invariant ergodic measure on $\Gamma\ba G$, up to proportionality,  which is the $G$-invariant measure. This
 result is due to Furstenberg \cite{Fu} for $G=\PSL_2(\br)$ and Veech \cite{Vee} in general.
   Dani \cite{Da} classified all $N$-invariant ergodic measures for a general lattice $\Ga$. Later, Ratner \cite{Ra} gave a complete classification of all invariant ergodic measures for any unipotent subgroup action when $\G$ is a lattice of $G$.

When $G$ is of rank one
and $\Gamma$ is geometrically finite,
there exists a unique $MN$-invariant ergodic measure on $\Ga\ba G$, not supported on a closed $MN$-orbit, where $M$ is a maximal compact subgroup of the normalizer of $N$, called the Burger-Roblin measure. 
 This result is due to Burger \cite{Bu} for convex cocompact subgroups of $\PSL_2(\br)$
with critical exponent bigger than $1/2$, and to Roblin \cite{Ro} in general. 
For $G\not\simeq \op{SL}_2(\br)$, Winter \cite{Win} showed that the Burger-Roblin measure is $N$-ergodic, and hence the $N$-action on $\Gamma\ba G$ 
is essentially uniquely ergodic. This relies on the fact that $M$ is connected. Indeed, for $G\simeq \op{SL}_2(\br)$ where $M=\{\pm e\}$,  the Burger-Roblin measure has one or two $N$-ergodic components depending on $\G$ (cf. \cite[Thm. 7.14]{LO2}).

For geometrically infinite groups, there may be a continuous family of $N$-invariant ergodic measures,
as first discovered by Babillot and Ledrappier (\cite{Bab}, \cite{BL}).  See (\cite{Sa1}, \cite{Sa2}, \cite{Led}, \cite{LS}, \cite{OP}, \cite{LL}, \cite{L}) for partial classification results in the rank one case. 

 In this paper, we obtain a measure classification result for the $N$-action on Anosov homogeneous spaces $\G\ba G$ which surprisingly depends on the rank of $G$: on the recurrent set in an interior direction of the limit cone of $\Gamma$, the $N$-action is uniquely ergodic  if
 $\op{rank} G\le 3$, and admits no invariant measure if
 $\op{rank} G> 3$.

 When the rank of $G$ is one, the class of Anosov subgroups coincides with that of Zariski dense convex cocompact subgroups. To define it in general,
  let $P$ be a minimal parabolic subgroup of $G$. Let $\cal F$ denote the Furstenberg boundary $G/P$, and $\cal F^{(2)}$ the unique open $G$-orbit in $\cal F\times \cal F$.
A Zariski dense discrete subgroup $\Ga<G$ is called an {\it Anosov subgroup} (with respect to $P$) if it is a  finitely generated word hyperbolic group which admits a $\Ga$-equivariant embedding $\zeta$ of the Gromov boundary $\partial \Ga$  into $\cal F$ such that $(\zeta(x),\zeta(y))\in\cal F^{(2)}$ for all $x\ne y$ in $\partial\Ga$. First introduced by Labourie \cite{La} as the images of
 Hitchin representations of surface groups, this definition  is due to Guichard and Wienhard \cite{GW}. The class of Anosov groups in particular includes any Zariski dense Schottky subgroup (cf. \cite{Qu}, \cite[Lem. 7.2]{ELO}).

Let  $P=AMN$ be the Langlands decomposition of $P$, so that $A$ is a maximal real split torus of $G$, $M$
is a compact subgroup which commutes with $A$ and $N$ is the unipotent radical of $P$.
Fix a positive Weyl chamber $\fa^+\subset\fa=\log A$, and 
denote  by $\cal L_\Gamma\subset \fa^+$ the limit cone of $\Gamma$, i.e.,
$\L_\Ga$ is the smallest closed cone of $\fa^+$ which contains the Jordan projection of $\G$ (see \eqref{Jordan} for definition). It is known that if $\Gamma$ is Zariski dense,
$\L_\Ga$ is a convex cone with non-empty interior \cite[Thm. 1.2]{Be}.
We denote by $\La\subset \F$ the limit set of $\Ga$, which is the unique $\Gamma$-minimal closed subset of $\F$. Then
 $$\cal E:=\{[g]\in \Gamma\ba G: gP\in \Lambda\}$$ is the unique $P$-minimal closed subset of $\Gamma\ba G$.
For each vector $\mathsf v\in \inte \fa^+$, define the following  directional recurrent subset of $\E$:
\be\label{eu} \R_{\mathsf v}=\{x\in \Gamma\ba G: x \exp (t_i \v) \text{ is bounded  for some $t_i\to +\infty$}\}.\ee
It is easy to see that $\R_\v=\emptyset$ unless $\v\in \L_\Ga$.
Since  $\v\in \inte \fa^+$ and $AM$ centralizes $\exp (\br \v)$, $\R_{\mathsf v}$ is a $P$-invariant  dense Borel subset of $\cal E$ when it is non-empty. In particular,
 $\R_{\mathsf v}$ is either  co-null or null  for any $N$-invariant ergodic measure on $\G\ba G$.
 We are interested in understanding $N$-invariant ergodic measures supported on~$\R_\v$.
 
 In the rest of the introduction, 
 we assume that 
 $$G=\prod_{i=1}^{\mathsf r} G_i$$ where each $G_i$ is a rank one simple real algebraic group; hence $\r=\op{rank} G$. While $G_i$ can be isomorphic to $\op{PSL}_2(\br)$, we exclude the case when $G_i$ is isomorphic to $\op{SL}_2(\br)$ in order to ensure that $P$ is connected. We let $\G<G$ be an Anosov subgroup. For each $\v\in \inte \L_\Ga$, we denote by $m_{\v}^{\BR}$ the $MN$-invariant Burger-Roblin measure for the direction $\v$ 
(see \eqref{eq.BR}). For Anosov subgroups, it was shown by Lee and Oh that the family $\{\m_\v^{\BR}:\v\in \inte \L_\Ga\}$
gives all $N$-invariant ergodic and $P$ quasi-invariant measures on $\cal E$, up to proportionality (\cite{LO1}, \cite{LO2}).

 The main result of this paper is as follows: 
 \begin{thm} \label{m1} Let $\Ga<G$ be an Anosov subgroup and  $\v\in \inte\fa^+$.
 \begin{enumerate}
\item For $\r\le 3$ and $\v\in \inte \L_\Ga$, the $N$-action on $\R_\v$ is uniquely ergodic.
More precisely, $\m_{\v}^{\BR}$ is the unique $N$-invariant measure supported on
$\R_{\mathsf v}$, up to proportionality.  

\item For $\r> 3$ or $\v\notin \inte\L_\Ga$, there exists no $N$-invariant measure supported on
$\R_\v$.
\end{enumerate}
 \end{thm}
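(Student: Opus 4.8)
The plan is to reduce an arbitrary $N$-invariant Radon measure supported on $\R_\v$ to the Lee--Oh classification and then pin down its direction, the rank entering only through \cite{BLLO}. In detail: (a) show that every $N$-invariant Radon measure supported on $\R_\v$ is $P$-quasi-invariant, hence by \cite{LO1,LO2} proportional to $\m_{\v'}^{\BR}$ for some $\v'\in\inte\L_\Ga$; (b) show that being supported on $\R_\v$ forces $\v'=\v$; and (c) invoke \cite{BLLO} to decide whether $\m_\v^{\BR}$ is itself supported on $\R_\v$. So let $\mu$ be a nonzero $N$-invariant Radon measure with $\mu(\Ga\ba G\setminus\R_\v)=0$; since $\R_\v$ is Borel and $N$-invariant we may pass to the $N$-ergodic decomposition, assume $\mu$ is $N$-ergodic and still supported on $\R_\v$, and reassemble at the end. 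Granting (a) and (b), the classification forces $\mu\propto\m_\v^{\BR}$ with $\v\in\inte\L_\Ga$. When $\r\le 3$ and $\v\in\inte\L_\Ga$, the measure $\m_\v^{\BR}$ is by \cite{BLLO} a nonzero $N$-invariant measure supported on $\R_\v$, so it is the only one up to proportionality, which is (1). When $\v\notin\inte\L_\Ga$, the conclusion $\v\in\inte\L_\Ga$ is absurd; and when $\r>3$, the conclusion $\mu\propto\m_\v^{\BR}$ contradicts $\m_\v^{\BR}(\R_\v)=0$ from \cite{BLLO}; so in both cases no such $\mu$ exists, which is (2).

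For (a), the crucial step, set $a_t=\exp(t\v)$; since $\v\in\inte\fa^+$, $\{a_t\}$ normalizes $N$ and acts on it without nontrivial fixed vectors. Each push-forward $(a_t)_*\mu$ is again $N$-invariant and $N$-ergodic, hence, by the dichotomy for $\sigma$-finite ergodic invariant measures, either equivalent to $\mu$ or mutually singular with it. The key point is that $\mu$ is in fact quasi-invariant under the whole flow $\{a_t\}$: if not, the $\v$-flow would act on $\mu$ in a purely dissipative way, which contradicts the defining property of $\R_\v$ that $\mu$-almost every point returns to a fixed compact set along some sequence $t_i\to+\infty$. I expect making this incompatibility rigorous to be the main difficulty; it should come from a Hopf-type ratio argument played between the $N$-action (whose conditional measures are Haar) and the recurrence of the $\v$-flow, adapting to the higher-rank Anosov setting the techniques developed for horospherically invariant Radon measures on geometrically infinite rank-one quotients (cf.\ \cite{L}, \cite{LL}). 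Once $\mu$ is $\{a_t\}$-quasi-invariant, its Radon--Nikodym cocycle along $\exp(\br\v)$ is an additive character, identifying $\mu$ with a conformal density for the linear form tangent to the growth indicator $\psi_\Ga$ at $\v$; quasi-invariance then propagates from the direction $\v$ to all of $A$, and to $M$ (which is connected and centralizes $A$), so $\mu$ is $P$-quasi-invariant and \cite{LO1,LO2} applies.

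For (b), I would argue contrapositively. Since $\R_\v$ is $P$-invariant and $\m_{\v'}^{\BR}$ is $N$-ergodic, $\m_{\v'}^{\BR}(\R_\v)\in\{0,\,\|\m_{\v'}^{\BR}\|\}$, so it suffices to show $\m_{\v'}^{\BR}(\R_\v)=0$ for $\v'\ne\v$, i.e.\ that $\m_{\v'}^{\BR}$-almost every point leaves every compact set under $\exp(\br_+\v)$. This is a directional drift statement: for a $\m_{\v'}^{\BR}$-generic point $x=\Ga g$, the Anosov structure and the construction of $\m_{\v'}^{\BR}$ from the $\v'$-conformal density make the orbit $x\exp(\br_+\v)$ run essentially along the direction $\v'$, so for $\v\ne\v'$ it diverges linearly from every compact set; passing this from the $A$-invariant Bowen--Margulis--Sullivan measure to $\m_{\v'}^{\BR}$ again uses only $P$-invariance of $\R_\v$. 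Combining (a), (b), \cite{LO1,LO2} and \cite{BLLO} as above completes the argument. The principal obstacle is the dissipativity-versus-recurrence dichotomy in (a); step (b) is more geometric but still requires a form of the linear divergence uniform over the relevant part of the limit set.
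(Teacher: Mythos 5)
Your overall architecture is exactly the paper's: pass to the $N$-ergodic decomposition, prove $P$-quasi-invariance, invoke the Lee--Oh classification to get $\mu\propto\m_{\v'}^{\BR}$ with $\v'\in\inte\L_\Ga$, and then use the recurrence dichotomy of \cite{BLLO} both to force $\br\v'=\br\v$ and to produce the rank cutoff. In fact your step (b) need not be argued separately: the cited theorem of \cite{BLLO} already states that $\m_{\v'}^{\BR}(\R_\u)=0$ whenever $\br\u\ne\br\v'$, so the ``directional drift'' argument you sketch is subsumed in that reference, exactly as in the paper. Also note the measures here are infinite Radon measures, so the dichotomy is ``$\R_\v$ null or co-null,'' not a statement about total mass.

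The genuine gap is inside your step (a). You propose first to obtain quasi-invariance under the single one-parameter group $\exp(\br\v)$ by a recurrence-versus-dissipativity argument, and then assert that quasi-invariance ``propagates from the direction $\v$ to all of $A$, and to $M$.'' In higher rank this propagation is precisely the hard point and does not follow from anything you have said: $A$ is $\r$-dimensional, quasi-invariance under one line in $A$ gives no control in the transverse directions, and the Lee--Oh classification (Theorem \ref{class}) needs quasi-invariance under all of $P^\circ=AM^\circ N$. The paper's mechanism is different and essentially unavoidable: directional recurrence puts entire conjugates $h_0^{-1}\Ga h_0$ inside the scenery limit $\cal S_\mu(\v)=\limsup_{t}\op{Stab}_G(xa_t)$; the main technical result (Theorem \ref{mp}) shows that the generalized Jordan projection $\la(h)\in AM$ of every loxodromic $h$ in this set whose attracting and repelling fixed points are in general position with $(e^+,e^-)$ stabilizes $[\mu]$; and only then does Zariski density, via Benoist and Guivarch--Raugi (Theorem \ref{GGRR}), guarantee that these Jordan projections generate a dense subgroup of $AM^\circ$. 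Crucially these $\la(h)$ point in many different directions of $\fa^+$, not just along $\v$ --- that is what makes full $A$-quasi-invariance attainable; your intermediate claim that $\exp(\br\v)$-quasi-invariance alone identifies $\mu$ with a conformal density tangent to $\psi_\Ga$ at $\v$ is likewise unsupported. Finally, even the one-direction recurrence-to-quasi-invariance step is not a soft Hopf ratio argument here: since $N=\prod N_i$ may be non-abelian and is scaled anisotropically by $\op{Ad}_{a_t}$, the paper must construct $a_t$-conjugation-invariant quasi-balls with the Besicovitch property (Le Donne--Rigot) and replace the unavailable ratio ergodic theorem by a covering/double-counting argument.
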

 
This theorem uses the result by
Burger, Landesberg, Lee and Oh \cite{BLLO} that
 $\R_\v$ is a co-null (resp. null) set for $\m_\v^{\BR}$ for $\r\le 3$ (resp. $\r> 3$), which was developed simultaneously, in part for the purpose of this work.

We note that the unique ergodicity as in (1) implies that  $\m_\v^{\BR}$ is $N$-ergodic, reproving some special cases of \cite[Thm. 1.1]{LO2}. When $\r=1$ and $\Gamma$ is a convex cocompact subgroup of $G$,  this theorem recovers the unique ergodicity of the $N$-action on
$\cal E$.

We deduce  the following classification of $N$-ergodic measures supported on the directional recurrent set $$\R:=\cup_{\v\in \inte\fa^+} \R_\v.$$
A measure $\mu$ on $\Ga\ba G$ is said to be supported on $\R$ if the complement of $\R$ is contained in a $\mu$-null set.
\begin{cor}\label{m2} The space $\mathcal M$ of all $N$-invariant ergodic measures supported on $\R$ is given by
\begin{equation*}
    \mathcal M=\begin{cases} \mathcal \{\m_{\v}^{\BR}: \v\in \inte\L_\Ga\}&\text{for $\r\le 3$} \\
    \emptyset &\text{for $\r> 3$}. \end{cases}
\end{equation*}
\end{cor}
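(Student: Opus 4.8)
The plan is to deduce Corollary~\ref{m2} from the directional classification of Theorem~\ref{m1}, the main step being to show that \emph{any $N$-invariant ergodic measure $\mu$ supported on $\R$ is supported on a single directional set $\R_{\v_0}$ with $\v_0\in\inte\fa^+$.} Granting this: if $\r>3$, then $\R_{\v_0}$ carries no $N$-invariant measure by Theorem~\ref{m1}(2), a contradiction, so $\mathcal M=\emptyset$; if $\r\le 3$, the same part of Theorem~\ref{m1} excludes $\v_0\notin\inte\L_\Ga$, hence $\v_0\in\inte\L_\Ga$ and Theorem~\ref{m1}(1) gives $\mu\propto\m_{\v_0}^{\BR}$, so $\mathcal M\subseteq\{\m_\v^{\BR}:\v\in\inte\L_\Ga\}$. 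For the reverse inclusion when $\r\le 3$: by \cite{LO1,LO2} every $\m_\v^{\BR}$ with $\v\in\inte\L_\Ga$ is $N$-invariant and $N$-ergodic and distinct $\v$ yield non-proportional measures, while by \cite{BLLO} the set $\R_\v\subseteq\R$ is $\m_\v^{\BR}$-conull; hence $\m_\v^{\BR}\in\mathcal M$.

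To carry out the reduction, fix a norm on $\fa$, put $S=\{\v\in\fa^+:\|\v\|=1\}$, and assign to each $x\in\G\ba G$ its \emph{recurrence spectrum}
\[
\mathcal D(x)\ :=\ \{\,\v\in S\cap\inte\fa^+\ :\ x\in\R_\v\,\},
\]
so that $x\in\R$ exactly when $\mathcal D(x)\ne\emptyset$, and $\mathcal D(x)\subseteq\L_\Ga$ because $\R_\v=\emptyset$ unless $\v\in\L_\Ga$; since $\Ga$ is Anosov we have $\L_\Ga\setminus\{0\}\subseteq\inte\fa^+$, so $\mathcal D(x)$ lies in a fixed compact subset of the open chamber. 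The decisive property is that $\mathcal D$ is constant along $N$-orbits: for $n\in N$ and $\v\in\inte\fa^+$ one has $\exp(-t\v)\,n\,\exp(t\v)\to e$ as $t\to+\infty$ (here $N$ is the expanding horospherical subgroup of $\{\exp(t\v):t>0\}$), so that
\[
xn\exp(t\v)=\bigl(x\exp(t\v)\bigr)\bigl(\exp(-t\v)\,n\,\exp(t\v)\bigr)
\]
stays in a compact set along a sequence $t_i\to+\infty$ if and only if $x\exp(t\v)$ does; thus $\R_\v$ is $N$-invariant and $\mathcal D(xn)=\mathcal D(x)$ (the analogous computation with $A$ and the compact group $M$ recovers the $P$-invariance of $\R_\v$). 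Assuming, as discussed below, that $x\mapsto\mathcal D(x)$ is a Borel map into the standard Borel space of closed subsets of $S$ with its Effros structure, $N$-ergodicity of $\mu$ produces a fixed closed $D_0\subseteq S\cap\inte\fa^+$ with $\mathcal D(x)=D_0$ for $\mu$-a.e.\ $x$, and $D_0\ne\emptyset$ because $\mu$-a.e.\ point lies in $\R$; taking any $\v_0\in D_0$ then gives $x\in\R_{\v_0}$ for $\mu$-a.e.\ $x$, i.e.\ $\mu$ is supported on $\R_{\v_0}$, which is the asserted reduction. (Combining this with Theorem~\ref{m1}(1) and the non-proportionality of the $\m_\v^{\BR}$ even forces $D_0$ to be a single point; equivalently, for $\m_\v^{\BR}$-a.e.\ point the only recurrent direction is $\v$.)

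The step I expect to be the genuine obstacle is verifying that the recurrence spectrum $\mathcal D$ is closed-valued and Borel. Borel-measurability is routine once one rewrites the defining condition as
\[
x\in\R_\v\iff \liminf_{t\to+\infty}\d\bigl(x_0,\;x\exp(t\v)\bigr)<\infty
\]
for a fixed basepoint $x_0\in\G\ba G$: the right-hand side is Borel in $(x,\v)$ because $(x,t,\v)\mapsto x\exp(t\v)$ is continuous, whence $\{(x,\v):x\in\R_\v\}$ is Borel and measurability into the Effros Borel space follows. The substantive point is that $\mathcal D(x)$ is \emph{closed} (so that a point of the almost-everywhere value $D_0$ is a bona fide recurrence direction, not merely a limit of such); I would deduce this from the description of the sets $\R_\v$ developed in the body of the paper together with \cite{BLLO}, where membership $x\in\R_\v$ is characterized through accumulation directions of Cartan projections along the geodesic of $\Ga$ attached to $xP$ — and a set of accumulation directions is automatically closed. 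This is where the Anosov hypothesis is used essentially; granting it, the corollary follows softly from ergodicity and Theorem~\ref{m1}.
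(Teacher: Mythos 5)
Your global strategy coincides with the paper's: everything reduces to showing that an $N$-invariant ergodic measure supported on $\R$ is already supported on a single $\R_{\v_0}$, after which Theorem \ref{m1} (plus, for the reverse inclusion when $\r\le 3$, the recurrence statement of \cite{BLLO} and the ergodicity of the measures $\m_\v^{\BR}$) finishes the argument exactly as you describe. The paper carries out this reduction in Lemma \ref{Polish}: for $U$ ranging over a countable basis of balls in the unit sphere $S$ of $\fa^+$ it considers $\R(U):=\bigcup_{\u\in U}\R_\u$, proves each such set is universally measurable via the Measurable Projection Theorem of Castaing--Valadier \cite{CV}, deduces from ergodicity that each is null or co-null, and extracts a nested sequence of co-null sets $\R(U_{k,i_k})$ with $\bigcap_k U_{k,i_k}=\{\v\}$. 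Your Effros--Borel, set-valued reformulation is a repackaging of the same idea, but as written it has two gaps, one technical and one substantive.

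The technical one: $\{x:\mathcal D(x)\cap U\ne\emptyset\}=\R(U)$ is the projection to $\Ga\ba G$ of the Borel set $W\cap(\Ga\ba G\times U)$, hence a priori only analytic, not Borel, so Effros measurability does not simply ``follow'' from $W$ being Borel; this is exactly why the paper invokes the Measurable Projection Theorem and passes to the completed $\sigma$-algebra (the fix is standard, but it is the content of the lemma, not a routine remark). The substantive one is the closedness of $\mathcal D(x)$, which you correctly identify as the crux but then justify with an incorrect heuristic. Membership $x\in\R_\v$ is \emph{not} the statement that $\v$ is an accumulation direction of the relevant Cartan projections --- that set is indeed automatically closed, but it is much larger; it is the statement that $x\exp(t\v)$ returns to a \emph{fixed compact set}, i.e.\ that the orbit comes within a bounded distance $C=C(\v)$ of the ray $\br_+\v$ infinitely often, and $C(\v)$ may blow up as $\v$ varies (this distinction is precisely what makes the $\op{rank}>3$ half of \cite{BLLO} nontrivial). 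Abstractly such a direction set need not be closed: if the recurrence set in a flat were $\bigcup_k\{t\u_k+w_k:t\ge 0\}$ with $\u_k\to\v$ and offsets $w_k$ of size $k$ pointing away from $\br_+\v$, every $\u_k$ would be a recurrent direction while $\v$ would not be. Without closedness, ergodicity only yields that $\mathcal D(x)$ is $\mu$-a.e.\ a \emph{dense subset} of a fixed closed set $D_0$, and such dense subsets can vary with $x$ and have empty common intersection, so no single $\v_0$ is produced. Neither \cite{BLLO} nor the body of this paper provides the ``description of $\R_\v$'' you appeal to, so this step is not closed. (You have in fact located exactly the delicate point --- the paper's Lemma \ref{Polish} likewise ends by asserting the containment $\bigcap_k\R^*(U_{k,i_k})\subset\R^*_\v$ --- but locating the obstacle is not the same as removing it, and the justification you offer for it is wrong.)
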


We apply our theorem to some concrete examples considered in \cite{Bu2}.
Let $\Sigma$ be a surface subgroup with two convex cocompact realizations in rank one Lie groups $G_1$ and $G_2$. For each $i=1,2$,
denote by $\pi_i:\Sigma\to G_i$ an injective homomorphism with Zariski dense image. We assume that $\pi_2\circ \pi_1^{-1}$ does not extend to
an algebraic group isomorphism $G_1\to G_2$.

It is easy to check that $\Gamma_{\pi_1, \pi_2}:=\{(\pi_1(\gamma), \pi_2(\gamma)):\gamma\in \Sigma\}$ is an Anosov subgroup of  $G:=G_1\times G_2$. 
\begin{cor} For $\Ga=\Ga_{\pi_1, \pi_2}$ as above, the $N$-action on $\R_\v$ is uniquely ergodic  for
 each $\v\in \inte \L_\Gamma$.
\end{cor}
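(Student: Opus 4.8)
The plan is to recognize the statement as a direct instance of Theorem~\ref{m1}(1) for the group $G=G_1\times G_2$, which has rank $\mathsf r=2\le 3$. The only thing that really needs checking is that $\Gamma:=\Gamma_{\pi_1,\pi_2}$ is a Zariski dense Anosov subgroup of $G$ with respect to a minimal parabolic subgroup; once this is in place, Theorem~\ref{m1}(1) applies verbatim and shows that $\m_{\v}^{\BR}$ is the unique $N$-invariant measure supported on $\R_\v$ for every $\v\in\inte\L_\Ga$, which in particular gives unique ergodicity of the $N$-action on $\R_\v$.

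First I would check Zariski density. The two coordinate projections carry $\Gamma$ onto $\pi_1(\Sigma)$ and $\pi_2(\Sigma)$, each of which is Zariski dense in the corresponding factor by hypothesis; hence the Zariski closure $H\le G_1\times G_2$ of $\Gamma$ surjects onto each simple factor $G_i$. By Goursat's lemma together with the simplicity of the $G_i$, either $H=G_1\times G_2$, or $H$ is the graph of an algebraic group isomorphism $\phi\colon G_1\to G_2$. In the latter case $\phi(\pi_1(\gamma))=\pi_2(\gamma)$ for all $\gamma\in\Sigma$, i.e.\ $\phi$ extends $\pi_2\circ\pi_1^{-1}$, contradicting our standing assumption. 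So $H=G$, and $\Gamma$ is Zariski dense.

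Next I would exhibit the boundary map. Since $\pi_1$ is injective, $\gamma\mapsto(\pi_1(\gamma),\pi_2(\gamma))$ identifies $\Gamma$ with $\Sigma$, so $\Gamma$ is a finitely generated word hyperbolic group with $\partial\Gamma=\partial\Sigma$. A minimal parabolic subgroup of $G$ is $P=P_1\times P_2$ with each $P_i<G_i$ minimal parabolic, its Furstenberg boundary is $\mathcal F=\mathcal F_1\times\mathcal F_2$, and under the obvious identification the open $G$-orbit $\mathcal F^{(2)}$ corresponds to $\mathcal F_1^{(2)}\times\mathcal F_2^{(2)}$. Because $\pi_i$ is a Zariski dense convex cocompact realization of $\Sigma$ in the rank one group $G_i$ — equivalently, in rank one, an Anosov representation — it comes with a $\Sigma$-equivariant embedding $\zeta_i\colon\partial\Sigma\hookrightarrow\mathcal F_i$ such that $(\zeta_i(x),\zeta_i(y))\in\mathcal F_i^{(2)}$ for all $x\ne y$ in $\partial\Sigma$. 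Then $\zeta:=(\zeta_1,\zeta_2)\colon\partial\Sigma\to\mathcal F$ is $\Gamma$-equivariant, injective (already $\zeta_1$ is), and satisfies $(\zeta(x),\zeta(y))\in\mathcal F^{(2)}$ for all $x\ne y$. Thus $\Gamma$ is Anosov with respect to $P$, and the maximal horospherical subgroup of $G$ is $N=N_1\times N_2$.

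Finally I would invoke Theorem~\ref{m1}(1) with $\mathsf r=2$ to conclude unique ergodicity of the $N$-action on $\R_\v$ for each $\v\in\inte\L_\Ga$ (the set of such $\v$ being nonempty by Zariski density, cf.\ Benoist). I do not anticipate a genuine obstacle: the corollary is a straight specialization of the main theorem, and the one mildly delicate point is the Goursat-lemma step for Zariski density; one should also keep in mind that, exactly as in Theorem~\ref{m1}, the $G_i$ are implicitly taken in the admissible class — in particular not isomorphic to $\op{SL}_2(\br)$ — so that $P$ is connected.
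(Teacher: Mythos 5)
Your proposal is correct and follows exactly the route the paper intends: verify that $\Gamma_{\pi_1,\pi_2}$ is a Zariski dense Anosov subgroup of $G_1\times G_2$ (the paper leaves this as "easy to check"; your Goursat-lemma argument for Zariski density and the product boundary map $\zeta=(\zeta_1,\zeta_2)$ supply the details correctly) and then apply Theorem \ref{m1}(1) with $\mathsf r=2\le 3$. No gaps.
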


\subsection*{On the proof of Theorem \ref{m1}}
In the rank one case, i.e., when $\Ga$ is convex cocompact, Theorem \ref{m1} follows from the combined works of
Roblin \cite{Ro} and Winter \cite{Win} (see also \cite{MO} and \cite{Sc} for $G=\op{SO}^\circ(n,1)$ case). These proofs are all based on the
finiteness and the strong mixing property of
the Bowen-Margulis-Sullivan measure. In the higher rank case, although there exists an analogous measure (which is also called the Bowen-Margulis-Sullivan measure)  for each direction $\v\in \inte \L_\Ga$, this is an infinite measure \cite[Cor. 4.9]{LO1} and it is not clear how to extend the approaches of the aforementioned papers. We henceforth follow  an approach
of the recent work of Landesberg and Lindenstrauss \cite{LL} for the case $G=\SO^\circ(n,1)$ which is
in the spirit of Ratner's work. 
The main technical result we prove in this paper is the following: 
\begin{prop}\label{pm1}
Let $\Gamma$ be a Zariski dense discrete subgroup of $G$ and $\v\in \inte \fa^+$.
Then any $N$-invariant ergodic  measure $\mu$
on $ \R_\v$ is $P$-quasi-invariant.
\end{prop}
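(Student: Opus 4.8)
The plan is to adapt the Ratner-type scheme of Landesberg--Lindenstrauss \cite{LL} for $G=\SO^\circ(n,1)$, in which the finiteness and mixing of the Bowen--Margulis--Sullivan measure (both unavailable in our setting) are replaced by two ingredients: the self-similarity of $N$ under conjugation by $a_t:=\exp(t\v)$, and the directional recurrence built into $\R_\v$. Write $P=MAN$. Since $\mu$ is $N$-invariant and $N$ is normal in $P$, it suffices to prove that $\mu$ is quasi-invariant under $MA$, which centralizes $\exp(\br\v)$ (here we use that $\v$ is regular). Two structural facts will be used throughout: (i) because $\mu$ is $N$-invariant, its conditional measures along $N$-orbits are, for $\mu$-a.e.\ $x$, proportional to the Haar measure of $N$ transported to $xN$; and (ii) for $t>0$ the operator $\Ad(a_{-t})$ contracts $\fn=\bigoplus_{\alpha>0}\fg_\alpha$, with eigenvalues $e^{-t\alpha(\v)}<1$, so that $n\mapsto a_{-t}na_t$ is a uniform contraction of $N$ with a fixed positive Jacobian; in particular $a_t$ permutes $N$-orbits and $(a_t)_*\mu$ is again an $N$-invariant ergodic Radon measure on $\R_\v$. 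We may also assume the $N$-action on $(\R_\v,\mu)$ is conservative, since an $N$-ergodic dissipative $\mu$ is carried by a single orbit, which for discreteness reasons is closed and does not meet $\R_\v$.

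First I would establish a \emph{uniform} recurrence statement. As $\mu(\R_\v^c)=0$, one can fix a compact $K_0$ with $\mu(K_0)>0$ and a positive-measure $\Omega_0\subseteq K_0$ such that, for every $x\in\Omega_0$ and every $T>0$, the set $\{t>T:xa_t\in K_0\}$ is nonempty. The functions $x\mapsto\inf\{t>T_j:xa_t\in K_0\}$ on $\Omega_0$ are measurable and a.e.\ finite, hence bounded, say by $S_j<\infty$, off subsets of arbitrarily small measure; intersecting the complements along a $\liminf$ in $j$ (with $T_j\to\infty$) produces $\Omega\subseteq\Omega_0$ with $\mu(\Omega_0\setminus\Omega)=0$ such that every $x\in\Omega$ returns into $K_0$ at some time in each window $(T_j,S_j]$. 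Finally, since $\mu$ is $N$-invariant and the action is conservative and ergodic, the Hopf ratio ergodic theorem for the $N$-action gives, for $\mu$-a.e.\ $x$, that along the $N$-orbit of $x$ the points lying in $\Omega$ occur with positive density relative to those lying in $K_0$.

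Next comes the shearing. Fix a $\mu$-generic $x\in\Omega$, a small box $B\subset G$ at $e$, and return times $t_i\to\infty$ with $xa_{t_i}\to y_0\in K_0$ along a subsequence. By the contraction in (ii), for large $i$ the portion of the $N$-orbit of $x$ landing in $xa_{t_i}B$ is the image under $n\mapsto a_{-t_i}na_{t_i}$ of an $N$-ball of radius $R_i\to\infty$; on this ball $\mu$ is a multiple of Haar measure, a definite proportion of which lies in $\Omega$ by the density statement above. Transporting $\mu|_{xa_{t_i}B}$ back to $B$ by left translation by $xa_{t_i}$ yields measures of uniformly bounded mass (as $xa_{t_i}\in K_0$ and $\mu$ is Radon), whose weak-$*$ limit $\nu$ is $N$-invariant; because the renormalizations $n\mapsto a_{-t_i}na_{t_i}$ exhibit the self-similarity of $N$ and $R_i\to\infty$, $\nu$ is moreover invariant under $\{a_t\}$. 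A Ratner-style comparison --- identifying $\nu$ near $y_0$ with a transverse limit of $\mu$ by means of the $\Omega$-density, or, equivalently, running a two-point divergence argument with a $\mu$-generic partner $x'$ displaced from $x$ transversally to $N$ and flowing both by $a_{t_i}$ --- then forces $\mu$ to be quasi-invariant under $\exp(\br\v)$. With $\mu$ now $N$-invariant and $\exp(\br\v)$-quasi-invariant, repeating this argument inside each rank-one factor $G_i$ (where $\fn_i$ has at most two root spaces and the $G_i$-component of $\v$ is nonzero) promotes the quasi-invariance to all of $A=\prod_i A_i$, and the compact connected group $M=\prod_i M_i$ is then absorbed by a standard averaging and comparison, yielding $MA$- and hence $P$-quasi-invariance.

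I expect the heart of the difficulty to lie in the first two of these steps. Since $\mu$ is \emph{not} $\exp(\br\v)$-invariant, Poincar\'e recurrence is unavailable, so all recurrence has to be wrung out of the bare definition of $\R_\v$, and the only invariance one may freely invoke is that of $N$ --- which is exactly why the $N$-ergodic theory is the essential technical tool. Equally delicate is steering the infinite Radon measure $\mu$ through the limiting procedure: one must rule out escape of mass in the $A$-direction and control the transverse conditional structure precisely enough that the self-similar limit $\nu$ is genuinely comparable to $\mu$ near $y_0$ rather than a degenerate object. The upgrade from the single flow $\exp(\br\v)$ to the full centralizer $MA$, while structurally routine given the product decomposition of $G$, likewise has to be carried out by hand.
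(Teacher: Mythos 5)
Your high-level plan (adapt the Landesberg--Lindenstrauss scheme, exploit the self-similarity of $N$ under $\Ad(a_t)$ and the recurrence built into $\R_\v$) is the right one, but the proposal has genuine gaps at exactly the points where the work lies. First, the central step is asserted rather than proved: ``a Ratner-style comparison \dots then forces $\mu$ to be quasi-invariant under $\exp(\br\v)$'' is the entire content of Section 4 of the paper, and moreover it is aimed at the wrong target. Recurrence of $xa_t$ to a compact set does not produce quasi-invariance under $\exp(\br\v)$; what it produces is a nontrivial accumulation of the stabilizers $\op{Stab}_G(xa_t)=a_{-t}g^{-1}\Ga g a_t$ on a discrete subgroup $\Sigma=h_0^{-1}\Ga h_0$, and the shearing argument then yields quasi-invariance under the \emph{generalized Jordan projections} $\la(g_0)\in AM$ of loxodromic elements $g_0\in\Sigma$ --- elements of $AM$ pointing in whatever directions $\Ga$ dictates, not along $\br\v$. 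Consequently your ``promotion'' step is also not viable: one cannot ``repeat the argument inside each rank-one factor $G_i$'' because $\mu$ lives on $\Ga\ba G$ with $\Ga$ not a product and $\R_\v$ provides recurrence only in the single direction $\v$; and $M$-quasi-invariance does not follow from $N$- and $A$-quasi-invariance ``by averaging.'' The correct route from the partial information to full $AM^\circ$-quasi-invariance is via Zariski density of $\Sigma$ together with the Guivarch--Raugi-type theorem that the closed subgroup of $AM$ generated by $\la(\Sigma')$ for $\Sigma'$ Zariski dense contains $AM^\circ$ (Theorem \ref{GGRR}), combined with the fact (Lemma \ref{zd}) that $\Sigma$ contains a Zariski dense subgroup of loxodromic elements whose fixed points are in general position with $e^\pm$ --- a condition you never address, and which is genuinely restrictive in the product setting since $(y_h,\xi)\in\F^{(2)}$ requires $y_{h_i}\ne\xi_i$ in every factor.

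Second, your appeal to ``the Hopf ratio ergodic theorem for the $N$-action'' invokes precisely the tool the paper points out is unavailable: for the $a_t$-conjugation-invariant quasi-balls in $N=\prod N_i$ with $N$ non-abelian, no ratio ergodic theorem is known (the only non-abelian case settled is the Heisenberg group). The paper deliberately sidesteps this by working only with the Besicovitch covering property (Proposition \ref{REH}, via Le Donne--Rigot) and a maximal ratio inequality (Lemma \ref{maxi}), arranging a quantitative contradiction between an upper and a lower bound (Lemmas \ref{lem.cov} and \ref{lem.cov3}) rather than passing to pointwise ratio limits. Minor additional issues: your reduction to the conservative case is unjustified as stated (a closed $N$-orbit can a priori meet $\R_\v$), and the weak-$*$ limit measure $\nu$ you construct is never actually compared back to $\mu$ --- controlling that comparison for an infinite Radon measure without escape of mass is, as you yourself note, the heart of the difficulty, and the proposal does not supply it.
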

\begin{Rmk} \rm We refer to Theorem \ref{mp} for a more general version, analogous to the main theorem of \cite{LL} for $G=\SO^\circ(n,1)$. \end{Rmk}


Following \cite{LL},  our proof of Proposition \ref{pm1}
utilizes the geometry observed along the one-dimensional diagonal flow $\exp (\br \v)$ of points in the support of $\mu$ to obtain
an extra quasi-invariance of $\mu$. Roughly speaking,  if, for $\mu$-a.e.~$x \in \Ga\ba G$, we have
$x\exp (t_n \v) g_n =x\exp (t_n \v)$ for some infinite sequence $t_n\to \infty$ and $g_n\in G$
converging to some loxodromic element $g_0\in G$, we show that the generalized Jordan projection of $g_0$ preserves the measure class of $\mu$, provided the attracting fixed point of $g_0$
is in general position with that of $g_0^{-1}$. The last condition always holds in the rank one setting as any two distinct points on $\F$ are in general position. In the higher rank setting,
this property is needed to ensure that the high powers of $g_0$ attract some neighborhood of its attracting fixed point  to itself, which is an underlying key point which makes our analysis possible.

For $G=\SO^\circ(n,1)$,
 the conjugation action of an element of $A$ on $N$ is simply a scalar multiplication, and both the Besicovitch covering lemma and Hochman's ratio ergodic theorem for Euclidean norm balls in the abelian group $N\simeq \br^{\op{dim} N}$ were used in \cite{LL}, in order to control ergodic properties of $N$-orbits.
In our setting where $G$ is a product $\prod G_i$ of rank one Lie groups, 
the horospherical subgroup $N$ is a product $\prod N_i$ of abelian and two-step nilpotent subgroups and the conjugation action by $\exp (t\v)$ scales $N_i$'s by different factors. The existence of $\exp (t\v)$-invariant family of quasi-balls satisfying the Besicovitch covering property in this case is a consequence of the work of Le Donne and Rigot \cite[Thm. 1.2]{DR}. This is precisely the main reason for our assumption that $G$ is the product of rank one Lie groups. We note that in the higher rank case, the ratio ergodic theorem with respect to this family of quasi-balls in our $N=\prod N_i$, is available only when $N$ is abelian \cite{Dooley_Jarrett}.\footnote{We mention that the only case when the ratio ergodic theorem is known and $N$ is not abelian is when $G \simeq \op{SU}(n,1)$ and $N$ is Heisenberg \cite{Jarrett-Heisenberg}.}
To sidestep the lack of the ratio ergodic theorem in the generality we need,
we use in this paper a modified argument relying only on the Besicovitch covering property.
 In addition to technical difficulties arising in the higher rank setting and from the fact that $N$ is not necessarily abelian, our proof of Proposition \ref{pm1} is different from \cite{LL} also in this aspect.
 
 Theorem \ref{m1} is then deduced from Proposition \ref{pm1} together with the classification of $\Ga$-conformal measures on $\La$ of \cite{LO1} (Theorem \ref{class}) and  the dichotomy on the recurrence property of 
 the Burger-Roblin measures according to the rank of $G$, obtained in \cite{BLLO} (Theorem \ref{bb2}).

\medskip

\subsection*{Rank one groups} 
 While the main emphasis in this paper is on the higher rank case, one can also deduce the following new result for all rank one groups.
Given Theorem \ref{mp} and the description of $N$-ergodic invariant and $P^\circ$-quasi invaiant measures (cf. \cite[Lem. 5.2]{LL}, \cite[Prop. 7.2]{LO2}), the following corollary can be proved almost verbatim  as \cite[Cor. 1.1, 1.2]{LL} and \cite[Thm. 1.5]{L} where similar statements were established for $G=\SO^\circ(n,1)$.

For $y\in \Ga\ba G$, we denote by $\injrad (y) $ the supremal injectivity radius at $y$. 
\begin{cor}\label{cor_injrad_intro}
	Let $\Gamma$ be a Zariski dense discrete subgroup of a simple real algebraic group $G$ of rank one. 
	Let 
	$\mu$ be an $ N $-invariant ergodic measure supported on $ \cal E$.
\begin{enumerate}[leftmargin=*]
    \item 
	If the injectivity radius on $\Gamma\ba G$ is uniformly bounded \emph{away from $0$}, then at least one of the following holds:
	\begin{enumerate}
		\item $ \mu $ is quasi-invariant under some loxodromic element of $ P $,
		\item $ \lim_{t \to \infty} \injrad( x \exp t\v ) = \infty $ for $ \mu $-a.e.~$ x $ and $\v\in \inte\fa^+$. 
	\end{enumerate}
	
\item If the injectivity radius on $\Gamma\ba G$ is uniformly bounded \emph{from above} or if $\Gamma$ is a normal subgroup
	of a geometrically finite subgroup of $ G $, then either:
\begin{enumerate}
	\item $\mu$ is proportional to $\m^{\BR}_\nu|_Y$ for some
	$\G$-conformal measure $\nu$ on $\La$ and a $P^\circ$-minimal subset $Y\subset \Ga\ba G$ (see \eqref{eq.BR} for the definition of $\m_\nu^{\BR}$), or
	\item $ \mu $ is supported on a closed $MN$-orbit.
\end{enumerate}
\end{enumerate}
\end{cor}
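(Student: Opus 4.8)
The plan is to reduce everything to Theorem \ref{mp} (the rank-one specialization of Proposition \ref{pm1}), which says that if an $N$-invariant ergodic measure $\mu$ on $\R_\v$ is given, then $\mu$ is $P$-quasi-invariant; more precisely, following the dichotomy there, if $\mu$ is \emph{not} $P^\circ$-quasi-invariant then some strong recurrence along $\exp(\br\v)$ fails, which in the rank-one case must translate into an escape-of-mass statement, i.e.\ $\injrad(x\exp t\v)\to\infty$. For part (1): given an $N$-invariant ergodic measure $\mu$ supported on $\cal E$, either $\mu$ gives positive mass to some $\R_\v$ (in which case, by ergodicity, it is concentrated on $\R_\v$ and Theorem \ref{mp} applies), or $\mu(\R_\v)=0$ for every $\v\in\inte\fa^+$, which means precisely that for $\mu$-a.e.\ $x$ the orbit $x\exp(t\v)$ eventually leaves every compact set; since the injectivity radius is bounded below, leaving compact sets forces $\injrad(x\exp t\v)\to\infty$, giving alternative (b). In the first case, Theorem \ref{mp} gives quasi-invariance under a loxodromic element of $P$ (the generalized Jordan projection of the recurrence element $g_0$, whose attracting and repelling fixed points are automatically in general position in rank one), which is alternative (a). This is essentially the argument of \cite[Cor.~1.1]{LL}, transcribed.

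For part (2), the extra hypotheses are exactly what rule out the escape-of-mass alternative and upgrade $P$-quasi-invariance to $P^\circ$-quasi-invariance. If the injectivity radius is bounded \emph{above}, then no orbit can have $\injrad\to\infty$, so alternative (b) of part (1) is vacuous and $\mu$ must be quasi-invariant under a loxodromic element of $P$; one then argues, as in \cite[Thm.~1.5]{L}, that a single such quasi-invariance together with $N$-invariance and ergodicity forces full $P^\circ$-quasi-invariance unless $\mu$ is supported on a closed $N$-orbit (the exceptional case where the $\exp(\br\v)$-orbit of the closed $N$-orbit is itself closed and periodic). The normal-subgroup-of-geometrically-finite case is handled the same way: a theorem of the type in \cite{LL} shows that for such $\Gamma$ the BMS-type recurrence needed to run the argument is available for $\mu$-a.e.\ point unless $\mu$ lives on a closed $N$-orbit, and then Theorem \ref{mp} yields the loxodromic quasi-invariance which bootstraps to $P^\circ$-quasi-invariance.

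The bootstrapping step — from quasi-invariance under one loxodromic element $p\in P$ to quasi-invariance under all of $P^\circ$ — is where the real content sits, and it is the step I expect to be the main obstacle to write carefully. The idea is that the set of $g\in G$ under which $\mu$ is quasi-invariant is a closed subgroup containing $N$ and $p$; since $p$ is loxodromic in $P$, its $A$-component is nontrivial, so the group generated by $N$ and $p$, after taking closure and using that $A$ normalizes $N$ by scaling, contains $AN$, and in rank one with $P^\circ = AM^\circ N$ and $M^\circ$ compact one checks the remaining $M^\circ$-directions come for free (or the measure degenerates to a closed orbit). One must be careful that "quasi-invariant" is preserved under these limiting and group-generation operations — this uses that the Radon–Nikodym cocycle varies continuously, which is where the hypothesis that $P$ is connected (hence $G_i\not\simeq\op{SL}_2(\br)$, though here we are in a single rank-one $G$) is implicitly used. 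Since, as the statement says, this "can be proved almost verbatim" as in \cite{LL} and \cite{L}, I would keep the write-up short and cite those sources for the details of this bootstrapping, spelling out only the two places where the rank-one geometry (automatic general position of fixed points, and compactness of $M^\circ$) is invoked.
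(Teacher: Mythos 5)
Your proposal has two genuine gaps, both stemming from conflating recurrence of the orbit $x\exp(t\v)$ to compact sets with boundedness of the injectivity radius along it. For part (1), the step ``$\mu(\R_\v)=0$ means a.e.\ orbit leaves every compact set; since the injectivity radius is bounded below, this forces $\injrad(x\exp t\v)\to\infty$'' is false: an orbit can escape every compact set while the injectivity radius stays bounded (on a $\z$-cover of a compact hyperbolic manifold the injectivity radius is bounded above \emph{and} below globally, yet typical geodesics are divergent). The intended argument does not pass through $\R_\v$ at all; it works directly with $\liminf_{t\to\infty}\injrad(x\exp t\v)$, which is an $N$-invariant function of $x$ and hence $\mu$-a.e.\ constant. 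If that constant is finite, one extracts for $x=[g]$ nontrivial $\gamma_k\in\Gamma$ with $a_{t_k}^{-1}g^{-1}\gamma_k g a_{t_k}$ bounded; the \emph{lower} bound on the injectivity radius guarantees the limit $g_0\in\cal S_\mu(\v)$ is not the identity and is loxodromic (all nontrivial elements of $\Gamma$ have translation length bounded below, so there are no parabolics and Jordan projections stay away from $M$), and Theorem \ref{mp} applied to $g_0$ --- every loxodromic element lies in $\frak O_{(e^+,e^-)}\cup\frak O_{(e^-,e^+)}$ in rank one --- yields (a). The point of the corollary is exactly the gap between $\cal S_x(\v)\neq\{e\}$ (detected by the injectivity radius) and $x\in\R_\v$ (which forces $\cal S_x(\v)$ to contain a whole conjugate of $\Gamma$).

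For part (2) the problems are more serious. First, you invoke the dichotomy of part (1) under the hypothesis that the injectivity radius is bounded \emph{above}, whereas part (1) assumes it is bounded \emph{below}; without the lower bound the extracted limit may be trivial or parabolic, so (1) is simply not available, and the correct dichotomy is on $\limsup_t\injrad(xa_t)$: if it vanishes the orbit enters a cusp and $xN$ is a closed horosphere bounding a cusp (alternative (b)), otherwise one passes to times where the injectivity radius is pinched between two positive constants. Second, your bootstrap from quasi-invariance under a single loxodromic $p\in P$ to $P^\circ$-quasi-invariance cannot work: since $p$ normalizes $N$, the closure of $\langle N,p\rangle$ is $N\rtimes\overline{\langle p\rangle}$, and $\overline{\langle p\rangle}$ projects to the \emph{discrete} cyclic subgroup $\{a_0^k\}$ of $A$ --- it does not contain $A$. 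Indeed, if such a bootstrap were valid, conclusion (1)(a) would always upgrade to (2)(a) and the two parts would not need different hypotheses. The actual mechanism is to produce a \emph{Zariski dense} subgroup $\Sigma\subset\cal S_\mu(\v)$ --- a limit $\lim_k a_{t_k}^{-1}g^{-1}\Gamma g a_{t_k}$ along the pinched times, or a conjugate of $\Gamma$ itself via Theorem \ref{normal} in the normal-subgroup case --- and then to feed the entire set $\la\bigl(\Sigma\cap\frak O_{(e^+,e^-)}\bigr)$ into Theorem \ref{mp} together with the Guivarc'h--Raugi input (Theorem \ref{GGRR}), which says a closed subgroup of $AM$ containing the generalized Jordan projections of a Zariski dense subgroup contains $AM^\circ$. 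One loxodromic element gives one Jordan projection; you need a Zariski dense family of them.
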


\noindent We remark that by a recent work of Fraczyk and Gelander \cite{FG}, the injectivity radius on $\Ga\ba G$ is never bounded from above when $G$ is simple with
$\op{rank}G\ge 2$ and $\text{Vol}(\Ga\ba G)=\infty$.

\begin{Rmk} For $\Gamma$ geometrically finite, an atom of a $\Ga$-conformal density
is necessarily a parabolic limit point which yields a closed $MN$-orbit, and
the so-called Patterson-Sullivan measure, say, $\nu_0$, is the unique atom-free $\Ga$-conformal measure on $\La$ \cite{Su}.
Therefore Corollary
\ref{cor_injrad_intro}(2) implies the essential unique ergodicity for the $N$-action as well as the $N$-ergodicity of
$\m_{\nu_0}^{\BR}|_Y$ for each $P^\circ$-minimal subset $Y$.
Noting that the proofs given in \cite{MO} and \cite{Sc} on the $N$-unique ergodicity for $\op{SO}^\circ (n,1)$ rely on the ratio ergodic theorem for the abelian subgroup $N$ which is not available for a general rank one group,
our paper gives the only alternative proof for a general rank one case after Roblin and Winter (\cite{Ro}, \cite{Win}).
\end{Rmk}

\medskip
\subsection*{Organization}
In section 2, we set up notations and recall basic definitions. In section 3, we deduce the Besicovitch covering lemma for our setting from \cite{DR} and state several consequences including the maximal ratio inequality. In section 4, we prove Theorem \ref{mp}, which is the main technical result of this paper. In section 5,
we prove Theorem \ref{normal} which in particular implies Proposition \ref{pm1}, using Theorem \ref{mp} together with some properties of Zariski dense subgroups. In section 6, we specialize to Anosov subgroups and prove Theorem \ref{m1}.

\medskip 
We close the introduction with the following open problems.
\begin{Q}\rm For $\mathsf r\le 3$ and $\G$ Anosov,
is any $N$-invariant ergodic measure
on $\E$  necessarily supported on $\R_\v$ for some $\v\in \inte \L_\Ga$?
\end{Q}

\medskip
\noindent{\bf Acknowledgement.} We would like to thank Emmanuel Breuillard
and Amir Mohammadi for useful conversations on this work.
\section{Preliminaries}
Let $G$ be a connected, semisimple real algebraic group.   We fix, once and for all, a Cartan involution $\theta$ of the Lie algebra $\mathfrak{g}$ of $G$, and decompose $\fg$ as $\frak g=\frak k\oplus\mathfrak{p}$, where $\fk$ and $\fp$ are the $+ 1$ and $-1$ eigenspaces of $\theta$, respectively. 
We denote by $K$ the maximal compact subgroup of $G$ with Lie algebra $\fk$.
Choose a maximal abelian subalgebra $\fa$ of $\frak p$. Choosing a closed positive Weyl chamber $\fa^+$ of $\fa$, let $A:=\exp \frak a$ and $A^+=\exp \frak a^+$. 
The centralizer of $A$ in $K$ is denoted by $M$, and we set 
$N^-$ and $N^+$ to be the contracting and expanding horospherical subgroup: for  $a\in \inte A^+$,
  $$N^{\pm} =\{g\in G: a^{-n} g a^n\to e\text{ as $n\to \mp \infty$}\}.$$
 We set $P^{\pm}=MAN^{\pm}$, which are minimal parabolic subgroups.
As we will be looking at the $N^-$-action in this paper, we set  $N:=N^-$ and $P=P^-$ for notational simplicity. We also set $L=MA=P\cap P^+$.

  Let $w_0\in N_K(A)$ be the Weyl element satisfying
  $\op{Ad}_{w_0}\fa^+= -\fa^+$. Then $w_0$ satisfies
$w_0P^-w_0^{-1}=P^+$.
  For each $g\in G$, we define 
   $$g^+:=gP\in G/P\quad\text{and}\quad g^-:=gw_0P\in G/P.$$

 Let  $\F=G/P$ and $\F^{(2)}$ denote the unique open $G$-orbit in $\F\times \F$:
$$\F^{(2)}=G(e^+, e^-)=\{(g^+, g^-)\in \cal F\times \cal F: g\in G\}.$$ 
We say that $\xi, \eta$ are in general position if $(\xi, \eta)\in \F^{(2)}$.

Any element $g\in G$ can be written as the commuting product $g_hg_e g_u$, where $g_h$, $g_e$ and $g_u$
 are unique elements which are conjugate to elements of $A^+$, $K$ and $N$, respectively. 
We say $g$ is  {\it loxodromic} if $g_h\in \varphi (\inte A^+)\varphi^{-1}$ for some $\varphi\in G$, and write 
\be\label{Jordan} \la^A(g):= \varphi^{-1}g_h \varphi\in \inte A^+\ee calling it the Jordan projection of $g$.  We set 
  \be\label{yg} y_g:=\varphi^+;\ee
 this is well-defined independent of the choice of $\varphi$.
We note that $g$ fixes $y_g$ and
for any $h\in N^+$, $\lim_{k\to \infty}
g^k (\varphi h e^+) =y_g$, uniformly on compact subsets of $N^+$, and
for this reason, $y_g$ is called the attracting fixed point of $g$.
  
\subsection*{Bruhat coordinates} The product map $N\times A\times M\times N^+\to G$ is injective and its image is Zariski open in $G$.
For $g\in G$ and $n\in N$ with $gn\in NAMN^+$,
we write
\begin{equation}\label{eq.Bruhat}
gn=b^{N}(g,n)b^{AM}(g,n)b^{N^+}(g,n)
\end{equation}
where $b^N(g,n)\in N,\,
 b^{AM}(g,n)\in AM, \, b^{N^+}(g,n)\in N^+$ are uniquely determined.
For each subgroup $\star=N, AM $ or $ N^+$, $b^\star (g, n) $ is a smooth function for each $g\in G$ and $n\in N$ whenever it is defined.

For convenience,
for $\xi=ne^-$ with $n\in N$ and $g \in G$ with $g\xi\in Ne^-$,
we set $$b^\star(g,\xi):=b^\star(g,n).$$
If $g\in G$ is a loxodromic element with $ y_g\in Ne^-$,
the following
generalized Jordan projection of $g$ is well-defined:
$$
\la(g)=b^{AM}(g, y_g).
$$
We mention that the condition $y_g\in Ne^-$ implies that there exists $\varphi\in N N^+ $ such that
$g=\varphi a^{-1} m \varphi^{-1}$ for unique $a\in \inte A^+ $ and $m\in M$. In this case, $\la(g)=a^{-1}m$.
In particular, the $A$-component of $\la(g)$ coincides with $\la^A(g^{-1})$. 
If $g$ is not loxodromic, we set $\la(g)=e$.

\section{Covering lemma for $\exp t\v$-conjugation invariant balls}\label{hyp}
In the rest of the paper, let  $G:=\prod_{i=1}^{\r} G_i$ where $G_i$ is a connected simple real algebraic group of rank one.
For each $1\le i\le \mathsf r$, we identify $G_i$ with the subgroup
$\{(g_j)_j\in \prod_j G_j: g_j=e\; \text{ for all $j\ne i$}\}<G$
and we set $H_i:=H\cap G_i$ for any subset $H\subset G$.
We have $A=\prod_i A_i$ and $A^+=\prod_i A_i^+$ where $A_i$ is a one-parameter diagonalizable subgroup of $G_i$. Let $\alpha_i$ denote the simple root
of $G_i$ with respect to $A_i$.
  The  subgroup $N=N^-$ is of the form $N=\prod_i N_i$, where $N_i$ is the contracting
horospherical subgroup of $G$ for $A_i^+$ and $P=\prod P_i$ for $P_i=M_iA_iN_i$. We set $\cal F_i=G_i/P_i$.

As $G_i$ has rank one, $N_i$ is a connected simply connected nilpotent subgroup of at most $2$-step. 
Let $\frak n_i$ denote the Lie algebra of $N_i$. 
When $\frak n_i$ is abelian, for each $a_i\in A_i$, $\op{Ad}_{a_i}|_{\frak n_i}$ is the multiplication by $ e^{\alpha_i(\log a_i)} $.
When $\frak n_i$ is a $2$-step nilpotent, we can write $\frak n_i=\frak n_{i_1}\oplus \frak n_{i_2}$ where $[\frak n_{i_1}, \frak n_{i_1}]\subset \frak n_{i_2}$ and $\frak n_{i_2}$ is the center of $\frak n_i$. We have that  for $a_i\in A_i$, $\op{Ad}_{a_i}|_{\frak n_{i_1}}  =e^{\alpha_i(\log a_i)}$ and 
$\op{Ad}_{a_i}|_{\frak n_{i_2}}  =e^{2\alpha_i(\log a_i)}$ (cf. \cite{Mos}).

We call a function $d:N\times N\to [0, \infty)$ a quasi-distance on $N$ if it is symmetric, $d(x,y)=0$ iff $x=y$,
and there exists $C=C(d)\ge 1$ such that 
\be\label{CD} d(x,y)\le C (d(x,z)+ d(z, y))\quad\text{ for all $x,y,z\in N$.} \ee

For $s>0$ and $x\in N$, we set $B_d(x, s)=\{y\in N: d(x, y)< s\}$. 
For simplicity, we write $B_d(s):=B_d(e,s)$. Note that whenever $d$ is left-invariant, $B_d(x,s)= x B_d( s)$ for all $x\in N$ and $s>0$. 

When $N$ is abelian, it is well-known that Euclidean norm-balls of $N$ satisfy the Besicovitch covering property. In general,
we deduce the following from~\cite{DR}.
\begin{prop} \label{REH}For any $\mathsf v \in \inte \fa^+\cup\{0\}$,
there exists a continuous left-invariant quasi-distance $d=d_{\v}$ on $N$ such that the family of balls $\{B_d(u,s)=uB_d(s): u\in N, s>0\}$ satisfies the Besicovitch covering property.
That is,
there exists a constant ${\kappa_{\v}}>0$, depending only on $ d_{\mathsf v}$, such that for any bounded subset $S\subset N$, and any cover $\{ uB_d(t_u): u\in S\}$ of $S$, for some positive function $u\mapsto t_u$ on $S$, there exists a countable subset $F\subset S$
such that $\{ uB_d(t_u): u\in F\}$ covers $S$ and
$$\sum_{u\in F}  \mathbbm{1}_{u B_d(t_u)}\le
{\kappa_{\v}}.$$
Moreover, if $\v=0$, we can take $d_{\v}=d_0$ to be a distance, and
 if $\v\ne 0$, we have
\be\label{cong} B_d(e^t r)= \exp (t\mathsf v) B_d(r) {\exp (-t\mathsf v)}\quad\text{ for all $t\in \br$ and $ r>0$}.\ee
\end{prop}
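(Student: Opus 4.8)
The plan is to exhibit $N$, together with the family of automorphisms $u\mapsto \exp(t\v)\,u\,\exp(-t\v)$, as a \emph{graded group} in the sense of Le Donne--Rigot, and then to quote \cite[Thm.~1.2]{DR}. Write $\v=(\v_1,\dots,\v_{\r})$ with $\v_i\in\inte\fa_i^+$ and set $t_i:=\alpha_i(\v_i)$, so that $t_i>0$ for every $i$. Decompose $\fn=\bigoplus_{i=1}^{\r}\fn_i$ and, in the two-step case, $\fn_i=\fn_{i_1}\oplus\fn_{i_2}$ with $[\fn_{i_1},\fn_{i_1}]\subseteq\fn_{i_2}$ (put $\fn_{i_2}=0$ when $\fn_i$ is abelian), and declare $\fn_{i_1}$ to have weight $t_i$ and $\fn_{i_2}$ to have weight $2t_i$. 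Writing $V_s\subseteq\fn$ for the sum of the layers of weight $s$, the relations $[\fn_i,\fn_j]=0$ for $i\ne j$ and $[\fn_{i_1},\fn_{i_1}]\subseteq\fn_{i_2}$ (with $t_i+t_i=2t_i$) give $[V_s,V_{s'}]\subseteq V_{s+s'}$ and $\fn=\bigoplus_{s>0}V_s$, i.e.\ a positive gradation of the nilpotent Lie algebra $\fn$. The linear maps acting on $V_s$ by $\lambda^{s}$ are therefore Lie algebra automorphisms, and since $N$ is simply connected they integrate to a one-parameter automorphism group $\{\delta_\lambda\}_{\lambda>0}$ of $N$. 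From the description of $\op{Ad}_{a_i}|_{\fn_i}$ recalled above, $\op{Ad}_{\exp(t\v)}$ acts on $\fn_{i_1}$ as $e^{t\alpha_i(\v_i)}=e^{tt_i}$ and on $\fn_{i_2}$ as $e^{2tt_i}$, i.e.\ as the differential of $\delta_{e^t}$; since $\exp\colon\fn\to N$ is a diffeomorphism intertwining each of these two maps with its differential, one gets $\exp(t\v)\,u\,\exp(-t\v)=\delta_{e^t}(u)$ for all $u\in N$ and $t\in\br$.

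With this in hand I would invoke \cite[Thm.~1.2]{DR}: the graded group $(N,\{\delta_\lambda\})$ admits a continuous left-invariant quasi-distance $d=d_\v$ which is homogeneous, $d(\delta_\lambda x,\delta_\lambda y)=\lambda\,d(x,y)$, and for which $\{uB_d(s):u\in N,\ s>0\}$ has the Besicovitch covering property with a constant $\kappa_\v$ depending only on $d_\v$. Left-invariance is $B_d(u,s)=uB_d(s)$; homogeneity gives $B_d(\lambda r)=\delta_\lambda B_d(r)$, and taking $\lambda=e^t$ together with the identification of $\delta_{e^t}$ with conjugation by $\exp(t\v)$ produces \eqref{cong}. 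For $\v=0$ no equivariance is required, so I would instead grade $\fn$ by giving weight $1$ to every $\fn_{i_1}$ and weight $2$ to every $\fn_{i_2}$; since $[\fn_{i_1},\fn_{i_1}]=\fn_{i_2}$ for rank-one $G_i$, each $N_i$ is generated by $\exp\fn_{i_1}$, so this is a stratification and $N$ becomes a Carnot group, for which \cite[Thm.~1.2]{DR} yields a genuine continuous left-invariant homogeneous distance $d_0$ with the Besicovitch covering property.

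Granting \cite{DR}, the argument is essentially formal; the points that need care are purely structural. One must check that the prescribed weights $(t_i,2t_i)_i$ do define a positive gradation — this is exactly where the two-step relation $[\fn_{i_1},\fn_{i_1}]\subseteq\fn_{i_2}$ and the mutual commutativity of the factors $G_i$ enter — and that $\delta_{e^t}$ agrees on the nose with $\op{Ad}_{\exp(t\v)}$, so that \eqref{cong} is immediate from homogeneity. Note also that the weights $t_i$ are arbitrary positive reals, possibly incommensurable, which is why \cite{DR} only delivers a quasi-distance (rather than a genuine distance) when $\v\ne0$ — consistent with the statement — and why the rank-one structure theory, giving the explicit eigenvalues of $\op{Ad}_{a_i}$ on $\fn_i$, is needed to make the whole scheme go through.
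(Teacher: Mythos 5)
Your proposal is correct and follows essentially the same route as the paper: both realize the conjugation maps $u\mapsto\exp(t\v)\,u\,\exp(-t\v)$ as the dilations of a positive gradation of $\fn$ with layers of weights $t_i$ and $2t_i$ (noting that distinct factors commute and each $\fn_{i_2}$ is central, which is exactly the ``commuting different layers'' hypothesis of \cite[Thm.~1.2]{DR}), and then quote Le Donne--Rigot, with \eqref{cong} coming from homogeneity of the resulting quasi-distance. The only divergence is at $\v=0$, where the paper takes the trivial gradation $\fn=V_0$ and cites \cite[Cor.~1.3, Def.~2.21]{DR} to get a genuine distance, while you instead use the natural stratification with weights $1$ and $2$; both variants work.
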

\begin{proof} 
For $\lambda\ge 1$, consider
the Lie algebra homomorphism $\frak n\to\frak n$ given by $\delta_{\lambda}X =\op{Ad}_{\exp ((\log \lambda) \v)} X$.
Let $I:=\{i:\frak n_i\text{ abelian} \}$ and $J:=\{i:
\frak n_i \text{ is of $2$-step}\}$. Set $t_i:={\alpha_i (\v)}\ge 0$.
For $i\in I$, set $V_{t_i}:=\frak n_i$ and for $i\in J$, set $V_{t_i}:=\frak n_{i_1}$ and $V_{2t_i}:=\frak n_{i_2}$.
Since $\delta_\lambda$ acts on each $V_{t_i}$ (resp. $V_{2t_i})$ by $\lambda^{t_i}$ (resp. $\lambda^{2t_i}$),
and $\sum_{i\in I}V_{t_i}+\sum_{i\in J} V_{2t_i}$ is the center of $\frak n$, it follows that $\frak n= \left(\oplus_{i\in I\cup J} V_{t_i}\right) \oplus \left(\oplus_{i\in J} V_{2t_i}\right)$ provides commuting different layers for the family
$\{\delta_\lambda| \lambda>0\}$ in the terminology of \cite{DR}. Hence \cite[Thm. 1.2]{DR} provides the required quasi-distance
such that $d(\delta_\lambda (n_1), \delta_\lambda(n_2))= \lambda d(n_1, n_2)$ where $\delta_\lambda(n)= e^{(\log \lambda)\v} n e^{-(\log \lambda) \v}$ also denotes the Lie group isomorphism of $N$ induced from $\delta_\la$. For $\lambda =e^t$, this implies \eqref{cong}.
 If $\v=0$, then $t_i=2t_i=0$ for all $i$, and hence $\frak n=V_0$. Now \cite[Cor. 1.3, Def. 2.21]{DR} implies that $d_0$ can be taken to be a distance.
\end{proof}

Indeed, an explicit construction of $d_\v$ has been given in \cite{DR}: for $\v\in \inte \fa^+$,
for $(X_i)_i, (Y_i)_i\in \prod_i N_i$, and
\be \label{fixed} d_\v ((X_i)_i,  (Y_i)_i) =\max_i \mathsf d_i (X_i,  Y_i)^{1/\alpha_i(\v)}\ee
where $\mathsf d_{i}$ is a left invariant metric on $N_i$
induced from an Euclidean norm on $\frak n_i$.

For each $\v\in \inte\fa^+$ (resp. $\v=0$),
we fix a quasi-distance $d_\v$ as above (resp. a distance $d_0$),  and write 
 for any $\e>0$ and $u\in N$,
 \be\label{bv} B_\v(u, \e):= B_{d_\v}(u, \e),\quad\text{and}\quad 
B_\v(\e):= B_{d_\v}(\e).\ee

We denote by $m$ a Haar measure on $N$ and by $2\rho$ the sum of all positive roots, i.e.,
 $2\rho=\sum_{i=1}^{\mathsf r} \alpha_i (\text{dim} N +\text{dim} Z(N) )$, where $Z(N)$ denotes the center of $N$.  
For $\v\ne 0$, we have from \eqref{cong} that for any $R>0$ and $u\in N$, \be\label{vol} m(B_\v(u,R)) = R^{2 \rho (\v)}m(B_\v(u,1)). \ee
 
 For $\v=0$, $d_0$ is a left-invariant metric and by \cite{Gu} (see also \cite{Br}), we have
 \be\label{vol2} m(B_0(u,R)) =O(R^{\text{dim} N +\text{dim} Z(N)}). \ee

\begin{lemma} \label{ka1} Fix $\v\in \inte \fa^+$, $\beta>0$, $0<\eta_1<\eta_2$ and let $u\mapsto t_u$ be a positive function on $N$.
Consider the two collections of balls $\{B_\v(u, e^{t_u} {\eta_i})  : u\in N, t_u>0\}$ for $i=1,2$.
Then for any bounded subset $S\subset N$, there exists a countable subset $F\subset S$ such
that $\{ B_\v(u_i, e^{t_{u_i}} {\eta_1}) :u_i\in F\}$ covers $S$ and 
the following holds:
for each $u_j\in F$, 
\begin{equation*} 
\#\{u_i\in F: B_\v(u_i, e^{t_{u_i}} {\eta_1}) \subset B_\v(u_j, e^{t_{u_j}} {\eta_2}), |t_{u_i} -t_{u_j} |\le \beta\}\le  \kappa_* (\v,\beta, \eta_1,  \eta_2) \end{equation*}
where $\kappa_* (\v,\beta, \eta_1,  \eta_2):= \frac{m(B_\v({\eta_2}))}{m(B_\v({\eta_1})) } e^{\|2\rho\|\beta} {\kappa_{\v}} .$
\end{lemma}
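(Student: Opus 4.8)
The statement is a counting estimate refining the Besicovitch covering property from Proposition \ref{REH}. The plan is to first apply Proposition \ref{REH} to the cover $\{B_\v(u, e^{t_u}\eta_1) : u \in S\}$ of the bounded set $S$ (using the function $u \mapsto e^{t_u}\eta_1$), obtaining a countable subcover $\{B_\v(u_i, e^{t_{u_i}}\eta_1) : u_i \in F\}$ with multiplicity at most $\kappa_\v$. Then I would fix $u_j \in F$ and estimate the number of indices $u_i \in F$ satisfying the three conditions: $B_\v(u_i, e^{t_{u_i}}\eta_1) \subset B_\v(u_j, e^{t_{u_j}}\eta_2)$, and $|t_{u_i} - t_{u_j}| \le \beta$.

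The key computation is a volume-packing argument. For each such $u_i$, the ball $B_\v(u_i, e^{t_{u_i}}\eta_1)$ is contained in $B_\v(u_j, e^{t_{u_j}}\eta_2)$, so by \eqref{vol}, $m(B_\v(u_j, e^{t_{u_j}}\eta_2)) = (e^{t_{u_j}}\eta_2)^{2\rho(\v)} m(B_\v(1))$. On the other hand, the condition $|t_{u_i} - t_{u_j}| \le \beta$ gives $e^{t_{u_i}} \ge e^{-\beta} e^{t_{u_j}}$, hence each such ball has measure $m(B_\v(u_i, e^{t_{u_i}}\eta_1)) = (e^{t_{u_i}}\eta_1)^{2\rho(\v)} m(B_\v(1)) \ge e^{-2\rho(\v)\beta}(e^{t_{u_j}}\eta_1)^{2\rho(\v)} m(B_\v(1))$. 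Since the multiplicity of the subfamily $\{B_\v(u_i, e^{t_{u_i}}\eta_1)\}$ is bounded by $\kappa_\v$ (it is a subfamily of the full subcover $F$), summing the measures of those balls contained in $B_\v(u_j, e^{t_{u_j}}\eta_2)$ and weighted by the overlap bound gives
\[
\#\{u_i\} \cdot e^{-2\rho(\v)\beta}(e^{t_{u_j}}\eta_1)^{2\rho(\v)} m(B_\v(1)) \le \kappa_\v \cdot (e^{t_{u_j}}\eta_2)^{2\rho(\v)} m(B_\v(1)),
\]
so that $\#\{u_i\} \le \kappa_\v \, e^{2\rho(\v)\beta} (\eta_2/\eta_1)^{2\rho(\v)} = \kappa_\v\, e^{2\rho(\v)\beta}\, m(B_\v(\eta_2))/m(B_\v(\eta_1))$, using \eqref{vol} once more to rewrite the ratio of powers as the ratio of ball volumes. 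Since $2\rho(\v) \le \|2\rho\|\,\|\v\|$ — or more to the point, the exponent $e^{2\rho(\v)\beta}$ should be absorbed into $e^{\|2\rho\|\beta}$ after normalizing $\v$, which is the convention implicit in the stated $\kappa_*$ — one recovers the claimed bound $\kappa_*(\v,\beta,\eta_1,\eta_2) = \frac{m(B_\v(\eta_2))}{m(B_\v(\eta_1))} e^{\|2\rho\|\beta}\kappa_\v$.

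The only genuinely delicate point is making sure the overlap bound $\kappa_\v$ really applies to the relevant subfamily: the balls $B_\v(u_i, e^{t_{u_i}}\eta_1)$ with $u_i$ ranging over the chosen indices are a sub-collection of the Besicovitch subcover, so their multiplicity is still at most $\kappa_\v$ pointwise, and in particular at any point of $B_\v(u_j, e^{t_{u_j}}\eta_2)$. Integrating $\sum_i \mathbbm 1_{B_\v(u_i, e^{t_{u_i}}\eta_1)} \le \kappa_\v$ over $B_\v(u_j, e^{t_{u_j}}\eta_2)$ is exactly the inequality above; no extra subtlety arises from $N$ being nonabelian since \eqref{vol} already encodes the correct homogeneous volume scaling via \eqref{cong}. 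I expect this step — verifying the multiplicity bound transfers to the subfamily and that the volume identities \eqref{vol} are applied with the right radii — to be the main (minor) obstacle; everything else is bookkeeping.
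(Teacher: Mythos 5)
Your proposal is correct and follows essentially the same route as the paper: apply Proposition \ref{REH} to the cover by the $\eta_1$-balls, then integrate the multiplicity bound $\sum_i \mathbbm{1}_{B_\v(u_i,e^{t_{u_i}}\eta_1)}\le\kappa_\v$ over $B_\v(u_j,e^{t_{u_j}}\eta_2)$ and use the homogeneous volume scaling \eqref{vol} together with $|t_{u_i}-t_{u_j}|\le\beta$ to get the packing bound. Your remark about $e^{2\rho(\v)\beta}$ versus $e^{\|2\rho\|\beta}$ is apt — the argument naturally yields the former, and the paper's constant is just the corresponding upper bound under the implicit normalization of $\v$.
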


\begin{proof} Set $B_{u}:=B_\v(u, e^{t_u} \eta_1)$
and $ C_{u}:=B_\v(u, e^{t_u} \eta_2)$.
Let $F\subset S$ and $\{B_{u_i}:u_i\in F\}$ be respectively the countable subset and the corresponding countable subcover of $S$ given by Proposition \ref{REH}.
Fix $u_j\in F$. Suppose that $B_{u_1}\cup \cdots \cup  B_{u_p}\subset C_{u_j}$ and that $ |t_{u_i} -t_{u_j} |\le \beta$ for all $1\le i \le p$.
Since $$\sum_{i=1}^p \mathbbm{1}_{B_{u_i}}\le \kappa_{\v}\cdot  \mathbbm{1}_{\cup_{i=1}^p B_{u_i}} ,$$
we have 
\be\label{cuj} m(C_{u_j}) \ge m(\cup_{i=1}^p B_{u_i}) \ge \frac{1}{{\kappa_{\v}}} \sum_{i=1}^p m(B_{u_i}).\ee

Using \eqref{vol}, we get
$$
m(B_{u_i})\ge e^{-\|2\rho\|\beta} m(B_{u_j}) ,\text{ and }
m(C_{u_j})=\frac{m(B_{u_j}) m(B_\v({\eta_2}))}{m(B_\v({\eta_1}))} .$$
It then follows from \eqref{cuj}:
$$\frac{m(B_\v({\eta_2}))}{ m(B_\v({\eta_1}))} \ge \frac{p}{{\kappa_{\v}}}  e^{-\|2\rho\|\beta}, \text{ and hence }
p\le \frac{m(B_\v({\eta_2}))}{m(B_\v({\eta_1})) }{\kappa_{\v}}  e^{\|2\rho\|\beta},$$
proving the claim.
\end{proof}

The following is a consequence of the polynomial growth of the quasi-balls
$B_\v (t)$ in $N$: \begin{lem} \label{lem.x1} Let $\mu$ be  an $N$-invariant ergodic measure  on a Borel space $Z$ and fix
 $\v\in \inte\fa^+\cup\{0\}$.
For any bounded Borel subset $\Omega $ of $Z$ with $\mu(\Omega)>0$,
there exists a co-null subset $Z'$ (depending on $\Om$) such that
for all $x\in Z'$, we have the following: for any $r, \e>0$,  there exists a sequence $t_i\to \infty$ such that
\be\label{coo} \frac{\int_{ B_\v({t_i+r} )}\mathbbm{1}_{\Om}(xn)\,dn}{\int_{B_\v({t_i})}\mathbbm{1}_{\Om}(xn)\,dn}\le 1+\epsilon.
\ee
\end{lem}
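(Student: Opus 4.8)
The plan is to argue by contradiction: suppose the conclusion fails, so that there is a positive-measure set of points $x$ for which, after some finite time $T(x)$, the ratio $\frac{m(B_\v(t+r))\text{-mass}}{m(B_\v(t))\text{-mass}}$ of $\Omega$ stays bounded below by $1+\e$ for \emph{all} $t \ge T(x)$, for some fixed $r,\e>0$ (after intersecting over a countable dense set of parameters and using a pigeonhole to fix $r$, $\e$, and a threshold $T_0$ on a subset $\Omega'\subset$ of positive measure). The key point is that $B_\v(t)$ and $B_\v(t+r)$ have comparable Haar measure: by \eqref{vol}, $m(B_\v(t+r)) = e^{rR} m(B_\v(t))$ where I write $R := 2\rho(\v)$ if $\v \ne 0$, while by \eqref{vol2} the growth is polynomial of a fixed degree if $\v = 0$; in particular there is a constant $c = c(\v, r)$ with $m(B_\v(t+r)) \le c\, m(B_\v(t))$ for all $t \ge 1$.

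First I would set $f(x) := \mathbbm 1_\Omega(x)$ and $a_n(x) := \int_{B_\v(t_0 + nr)} f(xn')\,dn'$ for a fixed base time $t_0$. The hypothesis (for contradiction) says $a_{n+1}(x) \ge (1+\e)\, a_n(x)$ for all $n$ large, so $a_n(x)$ grows at least geometrically, like $(1+\e)^n$, for $\mu$-a.e.\ $x$ in the positive-measure set $\Omega'$. On the other hand, $a_n(x) = \int_{B_\v(t_0+nr)} f(xn')\,dn' \le m(B_\v(t_0 + nr))$, which grows only polynomially in $n$ (degree $R \cdot r$ worth, i.e.\ $\sim (t_0+nr)^{2\rho(\v)}$, or genuinely polynomial when $\v=0$). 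A polynomial cannot dominate a function bounded below by $(1+\e)^n \cdot a_{n_0}(x)$ once $a_{n_0}(x) > 0$ — and $a_{n_0}(x) > 0$ for $x \in \Omega'$ and $n_0$ large enough, since $x \in \Omega'\subset$ means $x = x e \in$ a left-translate and $e \in B_\v(t_0+n_0 r)$ forces the integrand to be positive on a neighborhood of $e$ when $x\in\Omega'$; more carefully, for $\mu$-a.e.\ $x\in\Omega'$ one has $m(\{n' : xn' \in \Omega'\} \cap B_\v(s)) > 0$ for all large $s$ because $\Omega'$ has positive measure and $N$-orbits through $\mu$-a.e.\ point are ``large'' — this is where $N$-invariance and ergodicity enter, via the fact that $\mu$-a.e.\ $N$-orbit meets $\Omega'$ in a set of positive Haar measure inside every sufficiently large quasi-ball (a standard consequence of the pointwise ergodic behavior, or simply: $\int_{B_\v(s)} f(xn')\,dn' \to \infty$ as $s\to\infty$ for $\mu$-a.e.\ $x$, since otherwise $f$ would be a.e.\ supported on a bounded piece of the orbit contradicting $\mu(\Omega')>0$ together with $N$-invariance). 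This contradiction forces the conclusion.

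The main obstacle I anticipate is making the last point fully rigorous without invoking a ratio ergodic theorem (which, as the introduction stresses, is unavailable in this generality): I need to know that for $\mu$-a.e.\ $x\in\Omega'$ the quantity $a_n(x)$ is eventually \emph{positive and not too small} — concretely, that it does not decay faster than any polynomial — so that the geometric lower bound $(1+\e)^n$ genuinely conflicts with the polynomial upper bound. I would handle this by a Fubini/mass argument: $\int_{\Omega'} a_n(x)\,d\mu(x) = \int_{\Omega'}\int_{B_\v(t_0+nr)} \mathbbm 1_{\Omega'}(xn')\,dn'\,d\mu(x)$ is nondecreasing in $n$ and, by $N$-invariance of $\mu$ and $\mu(\Omega')>0$, tends to $+\infty$ (it equals $m(B_\v(t_0+nr))\cdot\mu(\Omega')$ by disintegrating against the $N$-action on a cross-section, or at least is bounded below by a fixed positive constant times $\log$-many shells), so the set where $a_n(x)$ is bounded has measure tending to $0$; combined with the monotone/geometric growth forced on $\Omega'$ this pins the contradiction. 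Once that integrability bookkeeping is in place, the rest is the elementary observation that geometric growth beats polynomial growth.
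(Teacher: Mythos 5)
Your proposal takes essentially the same route as the paper's proof: use $N$-invariance, Fubini and ergodicity to show that for $\mu$-a.e.\ $x$ the set $\{n\in N: xn\in\Omega\}$ has positive Haar measure inside some quasi-ball, and then, assuming \eqref{coo} fails for such an $x$ and some $r,\e>0$, iterate the ratio bound to force geometric growth of $m(\{n : xn\in\Omega\}\cap B_\v(t_x+kr))$ in $k$, which contradicts the polynomial growth of $m(B_\v(t_x+kr))$. One slip worth flagging: the identity $m(B_\v(t+r))=e^{rR}m(B_\v(t))$ is false — by \eqref{vol} the volume of $B_\v(t)$ is $t^{2\rho(\v)}m(B_\v(1))$, polynomial (not exponential) in the radius, and this matters, since genuinely exponential growth would let the upper bound keep pace with the geometric lower bound $(1+\e)^k$ for small $\e$; fortunately you state and use the correct polynomial bound $\sim(t_0+nr)^{2\rho(\v)}$ in the actual contradiction. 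Finally, the ``not too small'' concern in your last paragraph is unnecessary: since $a_n(x)$ is nondecreasing in $n$, positivity of a single $a_{n_0}(x)$ (which your ergodicity argument already gives) suffices, exactly as in the paper.
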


\begin{proof} For $x\in Z$ and a subset $\Om\subset Z$, we write
\be\label{to} \T_{\Om}(x)=\{u\in N: xu\in \Om\}.\ee 

By ergodicity of $\mu$, we know that $\mu$-almost every $N$-orbit intersects $\Omega$ non-trivially.
Indeed, consider the set
\[ E:=\{ x \in  Z\;|\; m(\T_{\Omega} (x)\cap B_{\v}({s_x})) > 0 \text{ for some $s_x>0$}\}.\]
     
If $ x \in E $, then, for any $ u \in N $, there exists $ s > s_x $ satisfying
      \[ B_{\v}({s_x}) \subset u B_{\v}(s) \]
      and consequently
      \[ m(\T_{\Omega}(xu) \cap B_{\v}(s)) = m(\T_{\Omega}(x) \cap uB_{\v}(s)) \geq m(\T_{\Omega} (x) \cap B_{\v}({s_x})) > 0, \]
      implying $ xu \in E $. 
          Hence the set $ E $ is $ N $-invariant. Now, by ergodicity of $ \mu $, the set $ E $ is either null or conull. 
On the other hand, since
      \[ \int_{Z} m(\T_{\Omega}(x) \cap B_{\v}(1)) d\mu(x) = \int_{B_{\v}(1)} \int_{Z} \mathbbm{1}_\Omega(xn) d\mu(x) dn = m(B_{\v}(1))\mu(\Omega)>0, \]
the set $\{x\in Z: m(\T_{\Omega}(x) \cap B_{\v}(1))>0\}$ has positive measure. Therefore $\mu(E)>0 $, and hence $E$ is conull. Set $Z'=E$.
Let $x\in {Z'}$ and $s_x>0$ be such that $m(\T_{\Omega} (x)\cap B_{\v}({s_x})) > 0$. Suppose that $\eqref{coo}$ does not hold for $x$. Then
                   there exists $t_x>s_x$ such that
    for all $t\ge t_x$,
$$m(B_{\v}({t+r}))\ge m(\T_\Om (x)\cap B_{\v}({t+r}))\ge (1+\e) m(\T_{\Om}(x)\cap B_{\v}(t)).$$
It follows that for all $k\ge 1$,
$$m( B_{\v}({t_x+kr})) \ge (1+\e)^k m(\T_{\Om}(x)\cap B_{\v}({t_x})).$$
Since  $m( B_{\v}({t_x+kr}))$ grows polynomially in $k$ by \eqref{vol} and \eqref{vol2},
and since $m(\T_{\Om}(x)\cap B_{\v}({t_x}))>0$, this yields a contradiction.
\end{proof}

A standard consequence of the Besicovitch covering property is the maximal ratio inequality. These are in fact equivalent when considering symmetric averaging sets, see \cite{Hochman} and references therein. For completeness we include below a proof of this implication applicable to our setup:

\begin{lem}[Maximal ratio inequality]\label{maxi}
Let $\mu$ be  an $N$-invariant ergodic measure  on a Borel space $Z$.
 Fix $\v\in \inte\fa^+\cup\{0\}$ and $\alpha >0$.
 For any  bounded measurable subsets $\Omega_1 $ and $\Omega_2$ of $Z$ with $\mu(\Om_2)<\infty$, we have $$\mu(\Om_2\cap E^\dagger) \le 2{\kappa_{\v}  }\alpha^{-1}  \mu(\Om_{1} )$$
where
$$E^\dagger :=
\left\{x\in Z:  \exists R >0 \text{ s.t } {\int_{ B_{\v}(R)}\mathbbm{1}_{\Om_1}(xn)\,dn}\ge \alpha {\int_{B_{\v}(R)}\mathbbm{1}_{\Om_2}(xn)\,dn}\right\}.$$
\end{lem}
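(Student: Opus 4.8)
The plan is to run a Vitali-type covering argument using the Besicovitch property from Proposition \ref{REH}, exactly as in the classical proof of the Hardy–Littlewood maximal inequality, but carried out fiberwise along $N$-orbits and then integrated against $\mu$. First I would decompose $E^\dagger$ according to $N$-orbits: for a fixed base point $x$, let $\T_{\Omega_j}(x)=\{u\in N: xu\in\Omega_j\}$ as in \eqref{to}, and observe that $xu\in E^\dagger$ precisely when there is some radius $R=R(u)>0$ with $m\bigl(\T_{\Omega_1}(x)\cap uB_\v(R)\bigr)\ge \alpha\, m\bigl(\T_{\Omega_2}(x)\cap uB_\v(R)\bigr)$, since $B_\v(u,R)=uB_\v(R)$ by left-invariance of $d_\v$. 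Thus the set $F_x:=\{u\in N: xu\in E^\dagger\cap\Omega_2\}$ is covered by the family of balls $\{uB_\v(R(u)): u\in F_x\}$, each of which "captures" $\Omega_1$-mass at least $\alpha$ times its $\Omega_2$-mass.

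Next I would apply the Besicovitch covering property to this family (restricted to a bounded exhaustion of $F_x$, then letting it grow, using boundedness of $\Omega_2$ to keep things finite): there is a countable subfamily $\{u_iB_\v(R(u_i))\}$ still covering $F_x$ with overlap bounded by $\kappa_\v$. Then
\begin{align*}
m\bigl(\T_{\Omega_2}(x)\cap F_x\bigr) &\le \sum_i m\bigl(\T_{\Omega_2}(x)\cap u_iB_\v(R(u_i))\bigr)\\
&\le \alpha^{-1}\sum_i m\bigl(\T_{\Omega_1}(x)\cap u_iB_\v(R(u_i))\bigr)\\
&\le \alpha^{-1}\kappa_\v\, m\bigl(\T_{\Omega_1}(x)\cap N\bigr),
\end{align*}
where the last inequality uses $\sum_i \mathbbm 1_{u_iB_\v(R(u_i))}\le\kappa_\v$. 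This is the pointwise (per-orbit) ratio inequality. The factor $2$ in the statement is harmless slack that I would absorb by noting the covering argument works with constant $\kappa_\v\le 2\kappa_\v$, or by a standard open-set enlargement if one wants strict balls.

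Finally I would integrate over $Z$ and use the $N$-invariance of $\mu$. Writing things out, $\int_Z m(\T_{\Omega_j}(x)\cap S)\,d\mu(x)=\int_S\int_Z\mathbbm 1_{\Omega_j}(xn)\,d\mu(x)\,dn = m(S)\mu(\Omega_j)$ for bounded $S$, by Fubini and invariance; the per-orbit inequality then transfers to $\mu(\Omega_2\cap E^\dagger)\le 2\kappa_\v\alpha^{-1}\mu(\Omega_1)$ after a routine limiting argument replacing $N$ by an increasing sequence of bounded sets $B_\v(n)$ and applying monotone convergence. The main technical obstacle is the usual one in non-compact maximal inequalities: the Besicovitch lemma as stated applies to covers of a \emph{bounded} set $S\subset N$, whereas the relevant fibers $F_x$ and the balls' radii $R(u)$ need not be uniformly bounded; I would handle this by first intersecting with $B_\v(n)$, discarding balls of radius exceeding $n$ (which only loses captured mass that is already accounted for elsewhere, by the boundedness of $\Omega_1,\Omega_2$ and finiteness of $\mu(\Omega_2)$), deriving the inequality on each level, and passing to the limit $n\to\infty$. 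Measurability of $E^\dagger$ and of the relevant fiber functions is routine since the defining condition is a countable union over rational radii of closed conditions.
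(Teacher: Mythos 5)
Your skeleton is the same as the paper's: cover the set of centers in a fiber $\T_{E^\dagger}(x)$ by the balls $B_\v(u,R(u))$ witnessing the maximal condition, extract a countable subfamily with overlap $\le\kappa_{\v}$ via Proposition \ref{REH}, compare $\Om_2$-mass to $\Om_1$-mass ball by ball, and transfer to $\mu$ by Fubini and $N$-invariance. The difference lies entirely in how you localize, and as written that step has a genuine gap.

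The per-orbit inequality you display, with $m(\T_{\Omega_1}(x)\cap N)$ on the right, is vacuous: the return set $\T_{\Omega_1}(x)$ typically has infinite Haar measure, and in any case the transfer identity $\int_Z m(\T_{\Omega_1}(x)\cap S)\,d\mu(x)=m(S)\mu(\Omega_1)$ only produces a multiple of $\mu(\Om_1)$ when $S$ is bounded. Your proposed repair --- truncate centers to $B_\v(n)$, discard radii exceeding $n$, let $n\to\infty$ --- couples the radius cutoff to the averaging scale: the surviving balls lie in $B_\v(n)B_\v(n)\subset B_\v(2Cn)$, where $C$ is the quasi-triangle constant of $d_\v$ from \eqref{CD}, so after integrating and using \eqref{vol} you pick up the volume ratio $m(B_\v(2Cn))/m(B_\v(n))=(2C)^{2\rho(\v)}$, which neither tends to $1$ nor is bounded by $2$. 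The factor $2$ in the statement is not slack absorbed into the Besicovitch constant; it is a F{\o}lner ratio, and your remark that ``the covering argument works with constant $\kappa_\v\le 2\kappa_\v$'' misses where it comes from. The correct order of limits is: first exhaust $E^\dagger=\bigcup_{R_1}E(R_1)$ by the sets where some radius $R\le R_1$ works; then, for fixed $R_1$, choose a compact averaging set $D$ with $0<m(DB_\v(R_1))\le 2\,m(D)$ --- this exists by the polynomial growth \eqref{vol}, \eqref{vol2} (one should use genuine F{\o}lner sets, e.g.\ metric balls $B_0(n)$ for the distance $d_0$, rather than the quasi-balls $B_\v(n)$, precisely to avoid the constant $C$). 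Covering $D\cap\T_{E(R_1)}(x)$ by balls of radius $\le R_1$ centered in $D$, all of which lie in $DB_\v(R_1)$, the overlap bound $\sum_i\mathbbm{1}_{B_\v(u_i,R_{u_i})}\le\kappa_{\v}\mathbbm{1}_{DB_\v(R_1)}$ together with Fubini yields $\mu(\Om_2\cap E(R_1))\le \kappa_{\v}\alpha^{-1}\,\frac{m(DB_\v(R_1))}{m(D)}\,\mu(\Om_1)\le 2\kappa_{\v}\alpha^{-1}\mu(\Om_1)$, which is the stated bound.
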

\begin{proof} For $R_1\ge 0$, set
$$E(R_1):=
\left\{x\in Z: \exists 0\le R\le R_1 \text{ s.t }  {\int_{B_{\v}(R)}\mathbbm{1}_{\Om_1}(xn)\,dn}\ge \alpha{\int_{B_{\v}(R)}\mathbbm{1}_{\Om_2}(xn)\,dn} \right\}.$$
Since $E(R_1)$ is an increasing sequence of subsets whose union is $E^\dagger$ and $\mu(\Om_2)<\infty$,
it suffices to show that for any $R_1\ge 0$,
$$\mu(\Om_2\cap E({R_1})) \le 2{\kappa_{\v}}\alpha^{-1} \mu(\Om_1) .$$
Fix a compact subset $D=D(R_1)\subset N$ so that $0<m(DB_{\v}({R_1}))\le 2 m(D)$, which is possible in view of \eqref{vol} and \eqref{vol2}.  
Let $\T_\Om(x)$ be defined as in \eqref{to}.
For each $x\in Z$ with $xu\in {E(R_1)}$,
there exists $0\le R_u\le R_1$ such that
$$m(\T_{\Om_2}(x)\cap B_{\v}(u, R_u))\le \alpha^{-1} m(\T_{\Om_1}(x) \cap B_{\v}(u, R_u)).$$

Consider the cover $\mathcal C (x)=\{B_{\v}(u, R_u) :  u\in D\cap \T_{ E(R_1)}(x)\}$
of the subset $ D\cap \T_{ E(R_1)}(x)$.
By Proposition \ref{REH}, we can find a countable subset $I_x\subset N$
such that the family $\{B_{\v}(u, R_u): u\in I_x\} \subset \cal C(x)$
covers  $D\cap \T_{ E(R_1)}(x)$ and  $$\sum_{u\in I_x}  {\mathbbm 1}_{B_{\v}(u, R_u)}\le\kappa_{\v}\mathbbm 1_{DB_\v(R_1)}.$$

We obtain:
\begin{align*}
& \mu(\Omega_2\cap E(R_1) )=\frac{1}{m(D)} \int_{Z} \int_{D}\mathbbm{1}_{\Om_2\cap E(R_1)}(xn)\,dn d\mu(x)
\\&= \frac{1}{m(D)} \int_{Z} {m(D\cap \T_{\Om_2\cap E(R_1)}(x) \cap(\cup_{u\in I_x}B_{\v}(u, R_u) ))}d\mu(x) \\
&\leq \frac{1}{m(D)} \int_{Z} \sum_{u\in I_x}  {m(\T_{\Om_2}(x)\cap B_{\v}(u, R_u) )}d\mu(x)\\
&\leq\frac{1}{\alpha \cdot m(D)}\int_{Z} \sum_{u\in I_x} {\int_{N} \mathbbm{1}_{B_{\v}(u, R_u)}(n) \cdot \mathbbm{1}_{\Om_1}(xn)}dn\, d\mu(x) \\
&= \frac{1}{\alpha\cdot  m(D)}\int_{N}\int_{Z} \left(\sum_{u\in I_x}\mathbbm{1}_{B_{\v}(u, R_u)}(n)\right)  \mathbbm{1}_{ {\Om_1}}(xn)d\mu(x) \,dn
\\&\leq \frac{ \kappa_{\v} }{\alpha \cdot  m(D)}\int_{DB_{\v}(R_1)}\int_{Z}\mathbbm{1}_{\Om_1}(xn)d\mu(x) \,dn
\\&= \frac{\kappa_{\v}\cdot m(DB_{\v}( R_1) ) }{\alpha\cdot m(D)} \mu (\Om_1)\\
&\le 2 \frac{\kappa_{\v} }{\alpha } \mu (\Om_1).
\end{align*}
\end{proof}

\section{Scenery along $\exp (\br_+\v)$-flow and quasi-invariance}
As before,  let $G:=\prod_{i=1}^{\r} G_i$ where $G_i$ is a connected simple real algebraic group of rank one. 
Let $\Ga$ be a discrete subgroup of $G$.
Let $\mu$ be an $N$-invariant ergodic measure on $\Ga\ba G$. In the whole section, we fix a vector $\mathsf v\in \inte\fa^+$, and  set $$a_t:=\exp (t\v)\quad\text{ for $t\in \br$.}$$

For all $x\in \Ga\ba G$, define
$$
\cal S_x(\v):=\limsup_{t\to +\infty} a_t^{-1}g^{-1}\Ga g a_t = \limsup_{t\to +\infty} \op{Stab}_G(xa_t).$$
The $\limsup_{t \to +\infty}$ above is the topological limit superior, i.e., the collection of all accumulation points; hence we may otherwise write
\[ \cal S_x(\v) = \bigcap_{n=1}^\infty \overline{\bigcup_{t>n} a_{-t}g^{-1}\Gamma g a_t}. \]
As $\v\in \inte \fa^+$, we have $\cal S_{xn}(\v)=\cal S_{x}(\v)$ for all $n\in N$, and hence the measurable map $x\mapsto \cal S_x$ is $N$-invariant. Since $\mu$ is $N$-ergodic, there exists a closed subset $\cal S_\mu(\v) $ of $G$ for which $\cal S_x(\v)=\cal S_\mu(\v)$ for $ \mu $-a.e.~$ x \in \Gamma \ba G $.

 For $\xi, \eta\in \cal F$,
we set $$\frak O_{(\xi, \eta)}:=\{h\in G:\text{loxodromic}, (y_{h}, \xi), (y_{h^{-1}}, \eta)\in \F^{(2)}\} .$$

We remark that as $G_i$'s are rank one groups,
for a loxodromic element $h=(h_1, \cdots, h_\r)\in G$ with $h_i\in G_i$
and $\xi=(\xi_1, \cdots, \xi_\r)\in \cal F$ with $\xi_i\in \cal F_i$,
we have 
$(y_h, \xi)\in \F^{(2)}$ if and only if
$y_{h_i}\ne \xi_i$ for all $1\le i\le \r$.

The main  result of this section is the following:
\begin{thm}\label{mp}\label{prop.sch}
We have
\be\label{smu2} \la \left(\cal S_\mu(\v) \cap (\frak O_{(e^+, e^-)}\cup \frak O_{(e^-, e^+) } )\right)\subset \op{Stab}_{G}([\mu])\ee
  where
$\op{Stab}_G([\mu])$ denotes the stabilizer in $G$ of the measure class of $\mu$.
\end{thm}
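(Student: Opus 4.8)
The goal is to show that if $g_0 \in \cal S_\mu(\v)$ is loxodromic with its attracting/repelling data in general position with $(e^+, e^-)$ (or $(e^-, e^+)$), then $\la(g_0)$ preserves the measure class of $\mu$. Since $g_0 \in \cal S_\mu(\v)$, by definition there exist, for $\mu$-a.e.\ $x$, times $t_n \to +\infty$ and elements $\gamma_n \in \Gamma$ such that $a_{-t_n} g^{-1}\gamma_n g a_{t_n} \to g_0$; equivalently, writing $x = [g]$, the point $x a_{t_n}$ returns close to itself after translation by an element conjugate (via $a_{t_n}$) to something approaching $g_0$. The strategy is: (i) use this recurrence to produce, for a positive-measure set of $x$, genuine near-returns $x a_{t_n} h_n$ with $h_n \to g_0$; (ii) since $g_0$ is loxodromic with attracting fixed point $y_{g_0}$ in general position, high powers of $g_0$ contract a neighborhood of $y_{g_0}$, so after suitable renormalization the conjugated returns $a_{-t_n} h_n a_{t_n}$ — which lie in a shrinking neighborhood controlled by the $\exp(t\v)$-contraction on $N$ and expansion on $N^+$ — organize into an actual element of $N$ times something in $AM$ approaching $\la(g_0) = b^{AM}(g_0, y_{g_0})$; (iii) conclude that the pushforward $(\la(g_0))_*\mu$ and $\mu$ share a common $N$-ergodic structure via the covering/ratio machinery of Section 3, forcing $\la(g_0) \in \op{Stab}_G([\mu])$.

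More concretely, I would first fix a bounded Borel set $\Omega$ with $\mu(\Omega) > 0$ and, using Lemma \ref{lem.x1}, pass to a conull set $Z'$ of points $x$ along whose $N$-orbit the quasi-ball averages of $\mathbbm 1_\Omega$ grow sub-geometrically along a sequence $t_i \to \infty$. For such $x$, combine the definition of $\cal S_\mu(\v)$ (a.e.\ equal to $\cal S_x(\v)$ by $N$-ergodicity) with a Fubini/Luzin argument to find, along a subsequence, return times $t_n$ and group elements $\delta_n \in \Gamma$ with $a_{-t_n}\delta_n a_{t_n} \to g_0$ in the sense that this is the realized limit in $\cal S_x(\v)$. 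Write the Bruhat decomposition near $y_{g_0}$: since $(y_{g_0}, e^-) \in \F^{(2)}$, $y_{g_0} \in N e^-$ and $g_0 = \varphi \la(g_0)^{(-1)}\varphi^{-1}$ with $\varphi \in NN^+$ — here I'd use exactly the normal form recorded after \eqref{eq.Bruhat}, namely $g_0 = \varphi a^{-1} m \varphi^{-1}$ with $\la(g_0) = a^{-1}m$. The point of the general-position hypothesis is that it lets us conjugate the recurrence relation by the $N^+$-part (which is contracted into identity under $a_{-t}$ as $t \to +\infty$) and the $N$-part (left-invariance absorbs it into the orbit), so that the effective perturbation seen by the $N$-action along $x a_{t_n}$ is, up to negligible error, multiplication by $\la(g_0)$ on the right.

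The key mechanism for upgrading "a loxodromic element shows up in $\cal S_\mu(\v)$" to "$\la(g_0)$ preserves $[\mu]$" is a Besicovitch-covering comparison: one shows that the measure $\nu := (\la(g_0))_* \mu$ is absolutely continuous with respect to $\mu$ (and vice versa, applying the same to $g_0^{-1}$, or to the $\frak O_{(e^-,e^+)}$ case by symmetry under $w_0$). Here is where Lemma \ref{maxi}, the maximal ratio inequality, enters: if $\nu$ were not $\ll \mu$, there would be a positive-$\mu$-measure set on which the $\nu$-density blows up, but the recurrence gives us, for $\mu$-a.e.\ $x$, a sequence of quasi-balls $B_\v(R_n)$ (with $R_n$ adjusted to the renormalization scale $e^{-\la^A(\cdot)}$-type factors dictated by \eqref{cong}) along which the $\Omega$-occupation of $x a_{t_n}$ and of $x a_{t_n}\la(g_0)$ are comparable up to $1+\epsilon$; feeding this into the maximal inequality bounds $\nu$-measures by $\mu$-measures uniformly, hence $\nu \ll \mu$. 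A Radon–Nikodym / ergodicity argument (the derivative is $N$-invariant hence a.e.\ constant, and by symmetry it equals $1$) then gives $\nu = c\,\mu$, i.e.\ $\la(g_0) \in \op{Stab}_G([\mu])$.

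\textbf{Main obstacle.} The hard part will be step (ii)–(iii): carefully tracking how a return element $\delta_n$ with $a_{-t_n}\delta_n a_{t_n} \to g_0$ acts on $N$-quasi-balls after the anisotropic renormalization by $a_{t_n}$ — the factors $\alpha_i(\v)$ differ across the rank-one factors $G_i$, so the quasi-distance $d_\v$ and its scaling \eqref{cong} must be used in place of a genuine metric, and one must verify that the $N^+$- and $AM$-parts of the Bruhat decomposition of the conjugated returns really do converge to $e$ and $\la(g_0)$ respectively, uniformly enough to run the covering argument. Equivalently, one needs the "high powers of $g_0$ attract a neighborhood of $y_{g_0}$" fact (valid precisely because of general position) to control not a single return but a whole exhausting family of them, and to do so while only having the Besicovitch property and not a ratio ergodic theorem — which is why the argument must be structured around Lemmas \ref{ka1} and \ref{maxi} rather than pointwise ergodic averages. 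Getting the bookkeeping of scales $t_n$ versus ball radii $R_n$ consistent, so that the $1+\epsilon$ in Lemma \ref{lem.x1} survives the perturbation by $\la(g_0)$, is the crux.
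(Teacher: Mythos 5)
Your overall strategy matches the paper's in its ingredients: reduce to one of the two sets $\frak O_{(e^\pm,e^\mp)}$ using that $\cal S_\mu(\v)$ and $\op{Stab}_G([\mu])$ are symmetric, exploit the contraction of high powers of $g_0$ near $y_{g_0}$ (which is exactly what general position buys), read off $\la(g_0)=b^{AM}(g_0,y_{g_0})$ from the Bruhat decomposition of the conjugated return elements, and substitute the Besicovitch covering property plus the maximal ratio inequality for the missing ratio ergodic theorem. You also correctly flag that one must work with elements \emph{near} $g_0$ (the recurrence only produces $g_u\in\op{Stab}_G(xa_{t_u})\cap V_0$ for a neighborhood $V_0$ of a high power $h_0^p$, not $g_0$ itself), and that the anisotropic scaling \eqref{cong} is what makes the covering argument run.

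The genuine gap is in your step (iii), which is where the theorem is actually proved. Your mechanism --- ``if $(\la(g_0))_*\mu\not\ll\mu$ then the density blows up on a positive-measure set, but occupation measures of $xa_{t_n}$ and $xa_{t_n}\la(g_0)$ are comparable up to $1+\epsilon$'' --- does not work as stated: since $\la(g_0)\in AM$ normalizes $N$, the measure $\mu.\la(g_0)$ is again $N$-invariant and ergodic, so by ergodicity the alternative to $\la(g_0)\in\op{Stab}_G([\mu])$ is \emph{mutual singularity}, and there is then no Radon--Nikodym density to compare; moreover the $1+\epsilon$ of Lemma \ref{lem.x1} is used in the paper only to control the boundary annulus $\partial_{R_2}D$ of the ball $D$ (second inequality of Lemma \ref{lem.x0}), not to compare $\mu$ with its translate. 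The paper instead argues by contradiction from singularity: it produces a conull $N$-invariant set $E$ and a neighborhood $\cal O_L$ with $E\cap E\ell_0^{-1}\cal O_L=\emptyset$ (Lemma \ref{OL}), hence a compact $Q\subset E$ for which $Q_\bot=Q\,\cal O_{N^+}\cal O_{\ell_0}^{-1}$ has very small measure; the Bruhat identity $x_0u(a_uva_u^{-1})=x_0u(a_ub^N(g_u,v)a_u^{-1})b^{AM}(g_u,v)(a_ub^{N^+}(g_u,v)a_u^{-1})$ with $b^{AM}(g_u,v)\in\cal O_{\ell_0}$ then forces the image sets $J_u=\bigl(\varphi_u\circ b^N(g_u^k,\cdot)\circ\varphi_u^{-1}\bigr)(B_u\cap\T_\Om)$ into the exceptional set $\Xi$ where the $Q_\bot$-occupation dominates the $Q$-occupation. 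A Jacobian lower bound $|\op{Jac}\,b^N(g_u^k,\cdot)|\ge 2c$ together with the bounded-multiplicity covering (Lemmas \ref{ka1}, \ref{lem.cov2}) gives the lower bound $m(\bigcup_u J_u\cap D)\ge \tfrac{c}{\kappa_*}m(\T_{\Om_2}\cap D)$, while two applications of the maximal ratio inequality (Proposition \ref{half}) let one choose $x_0\notin\Theta$ so that $m(\T_{\tOm\cap\Xi}\cap D)<\tfrac{c}{\kappa_*}m(\T_{\Om_2}\cap D)$ --- a contradiction. This double-counting of $\bigcup_u J_u\cap D$, predicated on the singularity alternative, is the heart of the proof and is absent from your proposal; without it (or an equivalent device) the plan does not close.
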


When $G$ is of rank one, any loxodromic element of $ G$  belongs to 
either $\frak O_{(e^+, e^-)}$ or
$\frak O_{(e^-, e^+)}$. Therefore \eqref{smu2} is same as saying
$$\la (\cal S_\mu(\v)) \subset \op{Stab}_{G}([\mu]);$$
this generalizes \cite[Thm. 1.3]{LL} to all rank one Lie groups.

Since $\cal S_\mu(\v)^{-1}=\cal S_\mu(\v)$, $\frak O_{{(e^+, e^-)}}^{-1}=
\frak O_{{(e^-, e^+)}}$, and $\op{Stab}_G([\mu])$
is a subgroup of $G$, \eqref{smu2} follows if we show:  
\be\label{smu} \la (\cal S_\mu(\v) \cap \frak O_{{(e^+, e^-)}} )\subset \op{Stab}_{G}([\mu])\ee

\medskip

The rest of this section is devoted to the proof of \eqref{smu}.
We fix the left-invariant quasi-distance $d_\v$ as in \eqref{fixed} and
 set 
$$N_\eta:=B_\v(\eta)\quad\text{for each $\eta>0$}$$
where $B_\v(\eta)$ is defined as in \eqref{bv}.
We set $$t_i:=\alpha_i(\v)>0\quad
\text{for each $1\le i\le \r$.}$$
Since $d_\v=\max_i \mathsf d_i^{1/t_i}$ where $\mathsf d_i$ is a left-invariant metric on $N_i$, for any $\eta>0$, the quasi-ball $N_\eta$ is a product of  balls in $N_i$:  
\be\label{neta} N_\eta=\prod_{{i}=1}^{\r} N_i(\eta^{t_i})\ee where 
$N_i(\eta^{t_i}):=\{ x\in N_i: \mathsf d_i(e_i, x)< \eta^{t_i}\}$ and $e_i$ denotes the identity element of $G_i$.\footnote{ We stress that the notation $N_i$ with subscript $i$ is used solely for the subgroup $G_i\cap N$, whereas $N_\eta, N_\epsilon, $ etc are used for quasi-balls in $N$.}

Fix any loxodromic element $$h_0\in \cal S_\mu(\v) \cap \frak O_{(e^+, e^-)}.$$ Our goal is to  show that
$\la (h_0)\in \op{Stab}_G([\mu])$.

Writing $h_0=(h_1, \cdots, h_{\r})$ component-wise, each $h_i$ is a loxodromic element of $G_i$.
We write $h_i=\varphi_ia_i^{-1}m_i\varphi_i^{-1}$ for some $a_i\in A_i^+-\{e\}$, $m_i\in M_i$ and $\varphi_i\in G_i$ so that $\varphi_i^-=\varphi_ie_i^-\in {\cal F}_i$ and $\varphi_i^+=\varphi_ie_i^+\in {\cal F}_i$ are the unique attracting fixed points of $h_i$ and $h_i^{-1}$ respectively; here $e_i^{\pm}\in \cal F_i$ means the $i$-th component of $e^{\pm}\in \cal F=\prod_i \cal F_i$.
As $G_i$ is of rank one, we have ${\cal F}_i= N_ie_i^-\cup\{e_i^+\}$. Since $h_0\in \mathfrak O_{(e^{+},e^-)}$, we have, for all $i$,
$$\varphi_i^-\ne e_i^+\text{ and } \varphi_i^+\ne e_i^-. $$
We denote by $n_i$ the unique element of $N_i$ such that
\be\label{ni} \varphi_i^-=n_ie_i^-\in N_ie_i^- .\ee

Using the diffeomorphism between $N_i$ and $N_ie_i^-$ given by $n\mapsto ne_i^-$, we may regard
$\d_i$ as a left-invariant metric on $N_i e_i^-$, so that 
\be\label{metric} \d_i (ne_i^-, n' e_i^-)=\d_i(n, n')\quad\text{ for all $n , n'\in N_i$.}\ee

\subsection*{Definition of $\eta_0$.} 

Since $e_i^-\neq\varphi_i^+$ and hence $e_i^-\in\varphi_iN_ie_i^-$
, there exist $\eta_0>0$ and $J>0$ such that
\begin{equation}\label{eq.J}
N_{\eta_0}e^-\subset \prod_{i=1}^{\mathsf r}\varphi_i N_i(J)e_i^-.
\end{equation}

\begin{lem} There exists $p_0=p_0(h_0)\in \mathbb N$ such that for all $p\ge p_0$, and $1\le i\le \r$, we have
\be\label{contracting1} \d_i(h_i^p z_i,h_i^p z_i')\le \frac{1}{2^{(t_i+1)}} \cdot  \d_i(z_i,z_i')\ee  
for all $z_i$, $z_i'\in \varphi_iN_i(J)e_i^-$.
\end{lem}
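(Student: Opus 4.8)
The plan is to reduce the estimate on each factor to a linear computation in exponential coordinates on $N_i$, and then transfer it to the intrinsic metric $\d_i$ at the (controlled) cost of bounded bi-Lipschitz factors.

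First I would conjugate by $\varphi_i$. Since $\varphi_i^{-1}h_i\varphi_i=a_i^{-1}m_i\in M_iA_i$, and $M_iA_i\subset P_i^+=\op{Stab}_{G_i}(e_i^-)$ normalizes $N_i$, the element $a_i^{-1}m_i$ fixes $e_i^-$ and acts on $N_ie_i^-$ by a Lie group automorphism; transporting $N_i$ onto $N_ie_i^-$ via $n\mapsto ne_i^-$ and translating by $\varphi_i$ gives $h_i^p(\varphi_i n e_i^-)=\varphi_i\,(\Ad_{(a_i^{-1}m_i)^p}n)\,e_i^-$ for all $n\in N_i$, $p\ge 1$. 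In the exponential chart this is the \emph{linear} map $\Ad_{(a_i^{-1}m_i)^p}|_{\fn_i}$. As $m_i$ commutes with $a_i$, $\Ad_{m_i}$ preserves the $\Ad_{a_i}$‑eigenspaces, so by the rank‑one structure recalled in Section \ref{hyp} this map preserves $\fn_i=\fn_{i_1}\oplus\fn_{i_2}$ (with $\fn_{i_2}=0$ in the abelian case) and equals $e^{-p\alpha_i(\log a_i)}\Ad_{m_i}^p$ on $\fn_{i_1}$ and $e^{-2p\alpha_i(\log a_i)}\Ad_{m_i}^p$ on $\fn_{i_2}$. Since $\varphi_i^-$ is attracting for $h_i$ we have $\alpha_i(\log a_i)>0$, and $\{\Ad_m:m\in M_i\}$ is bounded, so there is a constant $C_i=C_i(h_0)\ge 1$ with $\|\Ad_{(a_i^{-1}m_i)^p}|_{\fn_i}\|\le C_i\,e^{-p\alpha_i(\log a_i)}$ for all $p\ge 1$ (operator norm for the Euclidean norm on $\fn_i$ defining $\d_i$).

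Second I would transfer this to $\d_i$. On any fixed bounded subset of $N_i$ the left‑invariant (Riemannian) metric $\d_i$ is bi-Lipschitz to the Euclidean distance in the exponential chart; and $\varphi_i$, $\varphi_i^{-1}$, being diffeomorphisms of $\cal F_i$, are bi-Lipschitz for $\d_i$ on any fixed relatively compact piece of $N_ie_i^-$ near $\varphi_i^-$, resp.\ near $e_i^-$ (the set $\varphi_iN_i(J)e_i^-$ lies in $N_ie_i^-=\cal F_i\setminus\{e_i^+\}$, so $\d_i$ is defined on it). Since $\|\Ad_{(a_i^{-1}m_i)^p}|_{\fn_i}\|\to 0$, there is $p_0'=p_0'(h_0)$ so that $h_i^p(\varphi_iN_i(J)e_i^-)$ lies in a fixed small neighborhood of $\varphi_i^-$ for $p\ge p_0'$; chaining the bi-Lipschitz comparisons with the linear estimate and using \eqref{metric} to pass between $N_i$ and $N_ie_i^-$ yields $\d_i(h_i^pz_i,h_i^pz_i')\le K_i\,e^{-p\alpha_i(\log a_i)}\,\d_i(z_i,z_i')$ for all $z_i,z_i'\in\varphi_iN_i(J)e_i^-$ and $p\ge p_0'$, with $K_i=K_i(h_0)\ge 1$. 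Choosing $p_0\ge\max_i p_0'$ so large that $K_i\,e^{-p_0\alpha_i(\log a_i)}\le 2^{-(t_i+1)}$ for every $i$, and using that $\alpha_i(\log a_i)>0$ (so the bound persists for $p\ge p_0$), gives \eqref{contracting1}.

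The first step is routine; the step I expect to be the main obstacle is the second, namely applying the bi-Lipschitz comparisons only on the bounded regions where they are valid — which is precisely why one first checks that $h_i^p$ contracts $\varphi_iN_i(J)e_i^-$ into a fixed small neighborhood of $\varphi_i^-$. One should also be careful that in the $2$‑step case $\d_i$ is the left‑invariant Riemannian distance (comparable to Euclidean on bounded chart regions), not a Carnot–Carathéodory quasi-distance — for the latter the $\Ad_{a_i}$‑dilations would act with exact scaling, but that is not the metric used here.
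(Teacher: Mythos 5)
Your proposal is correct and follows essentially the same route as the paper: both decompose $h_i^p=\varphi_i(a_i^{-1}m_i)^p\varphi_i^{-1}$, observe that the Lipschitz constant of the middle (conjugation) factor tends to $0$ as $p\to\infty$ because $\op{Ad}_{a_i^{-1}}$ contracts $\fn_i$ while $\op{Ad}_{m_i^p}$ stays bounded, and absorb the outer factors $\varphi_i^{\pm1}$ via Lipschitz bounds on fixed compact pieces of $N_ie_i^-$ (the paper chains Lipschitz constants directly rather than passing to the exponential chart, but this is only a cosmetic difference). Your explicit rate $K_ie^{-p\alpha_i(\log a_i)}$ and the remark that the bound persists for all $p\ge p_0$ are exactly what is needed.
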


\begin{proof} 
Since $(a_i^{-1}m_i)^p ne_i^-= (a_i^{-p}(m_i^p nm_i^{-p}) a_i^p) e_i^-$ and $M_i$ is a compact subgroup normalizing $N_i$,
we have $(a_i^{-1}m_i)^p ne_i^-\to e_i^-$ as $p\to\infty$, uniformly for all $n\in N_i(J
)$. Therefore 
$\varphi_i (a_i^{-1}m_i)^p N_i({J})e_i^-$ is contained in a compact subset of $N_i\varphi_i^-=N_ie_i^-$ for all sufficiently large $p$.
Since $N_ie_i^-$ is endowed with a metric $\d_i$, induced from a Euclidean norm on $\frak n_i$,  the Lipschitz constant $\op{Lip}(\varphi_i|_{(a_i^{-1}m_i)^p N_i(J) e_i^-})$ is well defined and finite.
Since $h_i^p= \varphi_i (a_i^{-1}m_i)^p\varphi_i^{-1}$, we have
\begin{align*}\label{eq.Lip}
&\op{Lip}(h_i^p|_{\varphi_iN_i(
J
)e_i^-})\notag\\
&\leq \op{Lip}(\varphi_i|_{(a_i^{-1}m_i)^p N_i(
J
)e_i^-})\op{Lip}((a_i^{-1}m_i)^p|_{N_i(
J
)e_i^-})\op{Lip}(\varphi_i^{-1}|_{\varphi_iN_i(
J
)e_i^-}).
\end{align*}
Since $\op{Lip}((a_i^{-1}m_i)^p|_{N_i(
J
)e_i^-})\to 0$ as $p\to\infty$ and $(a_i^{-1}m_i)^pN_i(
J
)e_i^-\to e_i^-$, 
we have $\op{Lip}(h_i^p|_{\varphi_iN_i(
J
)e_i^-})\to 0$ as $p\to \infty$.
Therefore the lemma follows.
\end{proof}

Since
$h_0^{p}\prod_{i=1}^{\mathsf r} n_iN_i(\eta_0^{t_i})e^-\to y_{h_0}$ uniformly, as $p\to \infty$, and $y_{h_0}\in N e^-$,
by possibly increasing $p_0$ if necessary, we may assume that $p_0$ satisfies that for all $p \ge p_0$,
 \begin{align} 
&h_0^{p}\prod_{i=1}^{\mathsf r} n_iN_i(\eta_0^{t_i})\subset NLN^+;  \\
&\sup_{u\in N_{\eta_0} y_{h_0}} 
 |\op{Jac}_u b^N(h_0^{p},\cdot)|\le 1/2; \label{eq.l2}\\ \label{eq.r1}
 & h_0^{p} N_{r}y_{h_0}\subset N_{r/2}y_{h_0}\quad 
  \text{for all } 0<r <  \eta_0.
  \end{align}

We make use the following simple observation:
\begin{lem}\label{p0}
If  there exists $p_1\ge 1$ such that
$$ \{\la (h_0^p):p\ge p_1\} \subset \op{Stab}_G([\mu]),$$
then $\la(h_0)\in \op{Stab}_G([\mu]). $
\end{lem}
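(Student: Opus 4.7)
The plan is to reduce the claim to a one-line group-theoretic cancellation between two consecutive powers of $h_0$, modulo one preliminary observation: the generalized Jordan projection is multiplicative on positive powers, i.e.\ $\la(h_0^p)=\la(h_0)^p$ for every $p\ge 1$. Indeed, since $h_0$ is loxodromic with $y_{h_0}\in Ne^-$ (which follows from $h_0\in\frak O_{(e^+,e^-)}$), the paper supplies a decomposition $h_0=\varphi a^{-1}m\varphi^{-1}$ with $\varphi\in NN^+$, $a\in\inte A^+$ and $m\in M$, and because $A$ and $M$ commute we get $h_0^p=\varphi a^{-p}m^p\varphi^{-1}$. The attracting fixed point is unchanged by taking positive powers, so $y_{h_0^p}=y_{h_0}\in Ne^-$ and hence $\la(h_0^p)=a^{-p}m^p=(a^{-1}m)^p=\la(h_0)^p$.

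With this identity in hand, the hypothesis rewrites as $\la(h_0)^p\in\op{Stab}_G([\mu])$ for every $p\ge p_1$. Since $\op{Stab}_G([\mu])$ is a subgroup of $G$, both $\la(h_0)^{p_1}$ and $\la(h_0)^{p_1+1}$ lie in it, and so does their quotient
\[
\la(h_0)\;=\;\la(h_0)^{p_1+1}\cdot\bigl(\la(h_0)^{p_1}\bigr)^{-1}\in\op{Stab}_G([\mu]),
\]
which is exactly the conclusion.

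There is no genuine obstacle here; the only point that requires a moment's care is the multiplicativity $\la(h_0^p)=\la(h_0)^p$, and this is essentially forced by the paper's own conventions (commutativity of $AM$ and stability of $y_{h_0}$ under positive powers). Hence the role of the lemma is simply to license the later arguments, which produce quasi-invariance under $\la(h_0^p)$ only for all sufficiently large $p$, to conclude the desired quasi-invariance under $\la(h_0)$ itself.
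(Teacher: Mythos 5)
Your proof is correct and follows the same route as the paper: the paper's own proof is exactly the one-line cancellation $\la(h_0)=\la(h_0)^{p+1}\la(h_0)^{-p}$ using that $\op{Stab}_G([\mu])$ is a subgroup and that $\la(h_0^p)=\la(h_0)^p$. Your extra verification of the multiplicativity $\la(h_0^p)=\la(h_0)^p$ (via $h_0=\varphi a^{-1}m\varphi^{-1}$ with $\varphi\in NN^+$ and the invariance of the attracting fixed point under positive powers) is a detail the paper takes for granted, and it is argued correctly.
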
 

\begin{proof} Since $\op{Stab}_G([\mu])$ is a group and $\la(h_0)^p=\la(h_0^p)$,
the above lemma implies
that $$\la(h_0)= \la(h_0)^{p+1} \la(h_0)^{-p}\in \op{Stab}_G ([\mu]).$$ 
\end{proof}

Hence it suffices to show that for all $p\ge p_0$,
$\la (h_0^p)\in  \op{Stab}_G([\mu])$.
In the rest of this section, fix any $p\ge p_0$ and set
$$g_0=h_0^p.$$
We now assume that 
\be\label{con} \ell_0:=\la(g_0)\not\in\op{Stab}_G([\mu])\ee 
 and will prove that this assumption leads to a contradiction.

\medskip 

We write $g_i=h_i^p$ so that 
$$g_0=(g_1, \cdots, g_{\r}).$$
Noting
that $\varphi_i^-$ and
$\varphi_i^+$ are the attracting  fixed points of $g_i$ and $g_i^{-1}$ respectively, we
set $\varphi:=(\varphi_1, \cdots, \varphi_{\r})$. Hence
$\varphi^{\mp}=
(\varphi_1^{\mp}, \cdots, \varphi_{\r}^{\mp})$
are the attracting fixed points of $g_0^{\pm 1}$ respectively.
We set $$y_{g_0}:=\varphi^-.$$
 Note that $y_{g_0}=y_{h_0}$.
 By \eqref{contracting1}, for all $k\in \mathbb N$, we have
\be\label{contracting} \d_i(g_i^k z_i,g_i^k z_i')\le \frac{1}{2^{(t_i+1)k}}\cdot  \d_i(z_i,z_i')\ee  for any $z_i$, $z_i'\in\varphi_iN_i(J)e_i^-$.

We begin by presenting a long list of constants and subsets in a carefully designed order to be used in getting two contradictory upper and lower bounds in Lemmas \ref{lem.cov} and \ref{lem.cov3}. 

\subsection*{Definition of $E$, $\cal O_L$ and $\cal O_{\ell_0}$}
We fix subsets $E\subset \Ga\ba G$ and $\cal O_L\subset L$ as given by the following lemma:
\begin{lem} \label{OL}There exist an $N$-invariant  $\mu$-conull set $E\subset \Ga\ba G$ and a symmetric neighborhood $\cal O_L\subset L$ of $e$ such that 
$$E\cap E\ell_0^{-1}\cal O_L=\emptyset.$$
\end{lem}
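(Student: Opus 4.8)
The plan is to reduce the lemma to a single ``uniform-in-$\ell$'' singularity statement for $\mu$ and then to read off $E$ by a set-theoretic truncation, following the strategy of \cite{LL}.

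\textbf{Reduction.} If $E$ is $N$-invariant, $\mu$-conull and $E\cap E\ell_0^{-1}\mathcal O_L=\emptyset$, then $E\ell_0^{-1}\mathcal O_L\subset E^c$, so necessarily $\mu(E\ell_0^{-1}\mathcal O_L)=0$. Conversely, suppose one can produce an $N$-invariant $\mu$-conull Borel set $A$ and a symmetric relatively compact neighbourhood $\mathcal O_L$ of $e$ in $L$ with $\mu(A\ell_0^{-1}\mathcal O_L)=0$. Then $E:=A\setminus A\ell_0^{-1}\mathcal O_L$ works: the set $A\ell_0^{-1}\mathcal O_L$ is analytic, hence $\mu$-measurable, and $\mu$-null, so $E$ is $\mu$-conull; it is $N$-invariant because $A\ell_0^{-1}\mathcal O_L$ is (using $\mathcal O_LN=N\mathcal O_L$, $\ell_0^{-1}N=N\ell_0^{-1}$ and $AN=A$); and if $x\in E$ with $x=y\ell_0^{-1}o$, $y\in E\subset A$, $o\in\mathcal O_L$, then $x\in A\ell_0^{-1}\mathcal O_L$, contradicting $x\in E$, whence $E\cap E\ell_0^{-1}\mathcal O_L=\emptyset$. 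So the whole task is to build such $A$ and $\mathcal O_L$.

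\textbf{Choice of $\mathcal O_L$ and pointwise singularity.} Since $\mu$ is $N$-invariant, $N\subset\op{Stab}_G([\mu])$, which is a closed subgroup of $G$; as $\ell_0\in L\setminus\op{Stab}_G([\mu])$ by \eqref{con}, I would fix a symmetric relatively compact neighbourhood $\mathcal O_L$ of $e$ in $L$ with $\overline{\mathcal O_L}\,\ell_0\cap\op{Stab}_G([\mu])=\emptyset$. For each $\ell\in\mathcal O_L\ell_0$ the translate $\mu^\ell$ (given by $\mu^\ell(B)=\mu(B\ell^{-1})$) is again $N$-invariant and $N$-ergodic, because $\ell\in L$ normalises $N$; since $\mu^\ell\not\sim\mu$, decomposing $\mu^\ell$ against $\mu$ and noting that both Lebesgue components are $N$-invariant forces $\mu^\ell\perp\mu$.

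\textbf{The crux.} It remains to pass from the individual singularities $\mu^\ell\perp\mu$ ($\ell\in\mathcal O_L\ell_0$) to the single statement $\mu(A\ell_0^{-1}\mathcal O_L)=0$, equivalently to find a $\mu$-null Borel set $Z=A^c$ containing the plaque $x(\mathcal O_L\ell_0)$ for $\mu$-a.e.\ $x$. This is the main obstacle: it is strictly stronger than taking a union of the pointwise carriers (an uncountable union of $\mu$-null sets need not be $\mu$-null), and topological enlargement fails when $\op{supp}\mu$ is $\ell_0$-invariant. I would attack it by using $N$-ergodicity quantitatively — for instance, for a compact $C$ with $0<\mu(C)<\infty$, bounding the $\mu$-measure of $\{x:m_L(\{o\in\mathcal O_L:x\ell_0^{-1}o\in C\})>0\}$ by transporting the Besicovitch covering property and the maximal-ratio inequality of Section~\ref{hyp} to the $L$-variable, and then taking $Z$ to be an $N$-saturation of the resulting null set. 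The degenerate case where $\mu$ is carried on a single closed $N$-orbit $x_0N$ is immediate, since then $A=x_0N$ and, for $\mathcal O_L$ small, the closed orbits $x_0N\ell_0^{-1}o$ ($o\in\mathcal O_L$) are distinct from, hence disjoint from, $x_0N$ by closedness of $\op{Stab}_G([\mu])$. This uniform-in-$\ell$ estimate is exactly where the ergodic-theoretic heart of the argument lies; the remaining steps are routine bookkeeping.
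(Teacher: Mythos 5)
Your reduction ($E:=A\setminus A\ell_0^{-1}\cal O_L$ once one has a conull $N$-invariant $A$ with $\mu(A\ell_0^{-1}\cal O_L)=0$) is fine, and you correctly locate the difficulty; but the proof has two genuine gaps, and the second is the heart of the lemma. First, your choice of $\cal O_L$ rests on the assertion that $\op{Stab}_G([\mu])$ is closed in $G$. For a Radon (possibly infinite) measure this is not a general fact --- stabilizers of measure classes can be dense, non-closed subgroups --- and the paper nowhere uses it. Indeed, the conclusion of the lemma itself forces a whole neighborhood $\cal O_L\ell_0$ of $\ell_0$ in $L$ to miss $\op{Stab}_G([\mu])$ (otherwise $E$ and $E\ell_0^{-1}o$ would both be conull for some $o\in\cal O_L$ and could not be disjoint), so you are assuming a statement of essentially the same nature as part of what must be proved. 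Second, the ``crux'' you isolate is left unproved, and the attack you sketch is unlikely to succeed: the Besicovitch/maximal-ratio machinery of Section~3 lives on $N$ and uses the $N$-invariance of $\mu$; there is no analogue in the $L$-variable precisely because $\mu$ has no invariance, nor even quasi-invariance, under $\ell_0$ --- that is hypothesis \eqref{con}.

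The paper's route around both issues is different and more elementary. One does not need $\mu(A\ell_0^{-1}\cal O_L)=0$ for a \emph{conull} $A$; it suffices to produce one compact set $C$ of \emph{positive} measure with $\mu(C\ell_0^{-1}\cal O_L)=0$, because $N$-ergodicity then bootstraps: $E_1=\{z:\int_N\mathbbm 1_C(zn)\,dn>0\}$ has positive measure and $E_2=\{z:\int_N\mathbbm 1_{C\ell_0^{-1}\cal O_L}(zn)\,dn=0\}$ is conull, both are $N$-invariant, so $E=E_1\cap E_2$ is conull, and a change of variables gives $E\cap E\ell_0^{-1}\cal O_L=\emptyset$. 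The set $C$ and the neighborhood $\cal O_L$ are found simultaneously by localization rather than by a covering argument: starting from a conull $E'$ with $E'\cap E'\ell_0=\emptyset$ (mutual singularity of $\mu$ and $\mu\ell_0$, which only uses the single element $\ell_0$), one fixes a bounded neighborhood $x\cal O$ of positive measure, notes that $F=E'\cap x\cal O\ell_0^{-1}\cal O$ has $F\ell_0$ a bounded null set, shrinks $\cal O_L$ so that the thickening $F\cal O_L\ell_0$ still has small measure, and picks a compact $C\subset x\cal O-F\cal O_L\ell_0$ of positive measure; then $C\ell_0^{-1}\cal O_L\cap E'\subset F$ while $C\ell_0^{-1}\cal O_L\cap F=\emptyset$, so $C\ell_0^{-1}\cal O_L$ is disjoint from $E'$ and hence null. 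This single shrinking of $\cal O_L$ replaces both your appeal to closedness of the stabilizer and the missing uniform-in-$\ell$ estimate.
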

\begin{proof} 
Since $\mu$ is $N$-ergodic and $\ell_0\notin \op{Stab}[\mu]$, $\mu$ and $\mu.\ell_0$ are mutually singular. Hence
there exists a $\mu$-conull subset $E'\subset \Ga\ba G$  with $E'\cap E'\ell_0=\emptyset$.  Let $c=1$ if $|\mu|=\infty$, and $c=|\mu|$ otherwise.
Choose $x\in E'\cap \text{supp}(\mu)$ and a bounded neighborhood $\cal O\subset G$ of $e$
such that $\mu(x\cal O)>c/2$. Set $F:=E'\cap x \cal O \ell_0^{-1}\cal O$.  Since $F\ell_0\subset E'\ell_0$ is a bounded null set,
there exists a symmetric neighborhood $\cal O_L\subset L\cap \cal O$ of $e$ such that
$\mu(F\cal O_L \ell_0)<c/4$. 
 Noting that $\mu(x\cal O-F\cal O_L \ell_0)>c/4$, we may choose a compact subset $C\subset x\cal O-F \cal O_L \ell_0$ with $\mu(C)>c/4$.
Since $C\ell_0^{-1}\cal O_L\subset x\cal O \ell_0^{-1}\cal O$,
we have $$C\ell_0^{-1}\cal O_L\cap E'\subset x\cal O \ell_0^{-1}\cal O\cap E'=F.$$
Since $C\ell_0^{-1} \cal O_L\cap F=\emptyset$ by the choice of $C$, we get $C\ell_0^{-1}\cal O_L\cap E'=\emptyset$ and hence
$\mu(C\ell_0^{-1}\cal O_L)=0$.
Consider the following $N$-invariant measurable subsets:
$$E_1:=\{z\in \Ga\ba G: \int_N \mathbbm{1}_C(zn)dn>0\} \quad \text{ and}$$
$$E_2:=\{z\in \Ga\ba G: \int_N\mathbbm{1}_{C\ell_0^{-1} {\cal O_L}}(zn)dn =0\}.$$
Recall $B_\v(j)$ denotes the set $\{n\in N: d_\v(n,e)< j\}$ for each $j\in \N$. Since
$\int_{z\in \Ga\ba G} \int_{B_\v(1)}\mathbbm 1_C(zn)dn d\mu(z)=\mu(C)m(B_\v(1)) >0$, we have
$\mu(E_1)>0$ by Fubini's lemma. 
Since $$ \int_{z\in \Ga\ba G} \int_{B_\v(j)}\mathbbm 1_{C\ell_0^{-1}\cal O_L}(zn)dn d\mu(z) =\mu(C\ell_0^{-1}\cal O_L) m(B_\v(j))=0,$$ again by Fubini's lemma,  $E_2(j)$ is $\mu$-conull, 
where $E_2(j):=\{z\in \Ga\ba G: \int_{B_\v(j)}\mathbbm{1}_{C\ell_0^{-1} {\cal O_L}}(zn)dn =0\}.$
Since $E_2=\cap_{j=1}^\infty E_2(j)$, the set $E_2$ is $\mu$-conull as well. Therefore, if we set $E=E_1\cap E_2$, then $E$ is an $N$-invariant measurable subset with $\mu(E)>0$. Now the $N$-ergodicity of $\mu$ implies that
$E$ is a $\mu$-conull subset.
Moreover,
we have $E\cap E\ell_0^{-1} {\cal O_L}=\emptyset$; to see this, suppose
$z=y\ell_0^{-1} \ell$ for some $z,y\in E$ and $\ell\in \cal O_L$.
Then
$\int_N\mathbbm{1}_{C\ell_0^{-1}\cal O_L}(y\ell_0^{-1} \ell n)dn =0$. By changing the variable $\ell_0^{-1}\ell n (\ell_0^{-1}\ell)^{-1}\to n $, it implies
that
$\int_N\mathbbm{1}_{C\ell_0^{-1}\cal O_L\ell^{-1}\ell_0}(y n)dn =0$. 
Since $C\subset C\ell_0^{-1}\cal O_L\ell^{-1}\ell_0$, we get $\int_N\mathbbm{1}_{C}(y n)dn =0$, implying $y\notin E$, yielding contradiction.
\end{proof}

We set \be\label{ell0} \cal O_{\ell_0}:=\ell_0 \cal O_L,\ee 
so that $E\cap E\cal O_{\ell_0}^{-1}=\emptyset$.

\medskip

For a differentiable map $f : N\to N$, let $\op{D}_uf : T_uN\to T_{f(u)}N$ denote the differential of $f$ at $u\in N$.
Let $\tau_u : N\to N$ denote the left translation map, i.e., $\tau_u(n)=un$ for $n\in N$. Choosing a basis $\cal B_e:=\{v_1,\cdots,v_m\}$ of $T_eN$, the collection $\cal B_w:=\{\op{D}_e \tau_w(v_1),\cdots, \op{D}_e \tau_w(v_m)\}$ gives a basis for $T_wN$ for each $w\in N$.
The following Jacobian of $f$ at $u\in N$ is well-defined, independent of the choice of $\cal B_e$:
$$
\op{Jac}_u f:=\det \, [\op{D}_u f]_{\cal B_{u}}^{\cal B_{f(u)}}.
$$
Here $[\op{D}_u f]_{\cal B_{u}}^{\cal B_{f(u)}}$ denotes the matrix representation of $\op{D}_uf$ with respect to the indicated bases.

\subsection*{Definition of $r_1, r_0$}  Since  $b^{AM}(g_0,y_{g_0})={\ell_0}$ and $b^{AM}(g_0,\cdot)$ is continuous at $y_{g_0}$, we can find $0<r_1<\min_i \frac{1}{2^{1+(1/t_i)}}\eta_0$ such that
 $$b^{AM}(g_0,N_{r_1}y_{g_0})\subset \cal O_{\ell_0}.$$

Set
$$
r_0:=\frac{3}{4} r_1.
$$
\subsection*{Definition of $ k, c,\eta$}
By \eqref{eq.J},
we have $g_0^j   N_{\eta_0} e^-\to y_{g_0}$ uniformly as $j \to\infty$.
Hence we may fix a large integer $k\ge 1$ which satisfies the following three conditions for all $1\le i\le \r$:
\begin{align} \label{eq.sqz}
& N_{r_1/2}y_{g_0}\subset N_{r_0}\,g_0^k N_{\eta_0} e^- \subset N_{r_1}y_{g_0};\\
& \label{k1}  b^{N_i} (g_i^k, N_i({\eta_0}^{t_i}))\subset n_i N_i (r_0^{t_i}/4);\\&
g_0 b^N(g_0^k,N_{\eta_0})N_{r_0} \subset NLN^+
\end{align}
where $n_i$ is given in \eqref{ni}.
Since $g_i^k e_i^+\ne g_i^k e_i^-$,
we can choose $0<\eta<\frac{1}{2} \eta_0$ satisfying
\be\label{etachoice} g_i^k e_i^+\not\in b^{N_i}(g_i^k, 
e_i
)N_i(\eta^{t_i}) e_i^-\quad\text{ for all $i$}.\ee
We fix a small number $0<c<1/2$ so that for all $1\le i\le \r$ and $x, y\in N_i(\eta^{t_i})e_i^-$,
\be\label{cfix} (2c)^{t_i} \d_i(x,y) \le \d_i( b^{N_i}(g_i^k, x),
b^{N_i}(g_i^k, y))\ee
and
$$2c<\min  \left( \inf_{u\in N_{r_1}}|\op{Jac}_u b^N(g_{0},\cdot)|, \inf_{u\in N_{r_1}} |\op{Jac}_u b^N(g_{0}^{k},\cdot)|\right) .$$

\begin{lemma}\label{eq.Jac0}
We have
\be\label{first}
 b^N(g_0^k,e)N_{2c\eta}\subset b^N(g_0^k,N_\eta) \subset b^N(g_0^k,e)N_{\eta},
\ee
and 
\be\label{second}
b^N(g_0^k,N_{c\eta})\subset b^N(g_0^k,e)N_{c\eta}.
\ee
\end{lemma}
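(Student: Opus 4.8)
The plan is to reduce both inclusions \eqref{first} and \eqref{second} to a componentwise statement in each rank-one factor $N_i$, and then to combine the two ingredients already encoded in the choices of $k$ and $c$: a Lipschitz contraction bound, which yields the two ``inward'' inclusions, and a lower Lipschitz bound, which via invariance of domain yields the ``outward'' inclusion. Since $G=\prod_{i=1}^{\r}G_i$, the Bruhat decomposition in $G$ is the product of those in the $G_i$, so writing $f:=b^N(g_0^k,\cdot)$ and $f_i:=b^{N_i}(g_i^k,\cdot)$ we have $f=(f_1,\dots,f_{\r})$ wherever defined and $f(e)=(f_1(e_1),\dots,f_{\r}(e_{\r}))$. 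Together with $N_s=\prod_i N_i(s^{t_i})$ from \eqref{neta}, this reduces the lemma to showing, for each $i$, that $f_i(e_i)N_i((2c\eta)^{t_i})\subseteq f_i(N_i(\eta^{t_i}))\subseteq f_i(e_i)N_i(\eta^{t_i})$ and $f_i(N_i((c\eta)^{t_i}))\subseteq f_i(e_i)N_i((c\eta)^{t_i})$. I will work on $N_ie_i^-\subset\cal F_i$ via the diffeomorphism $n\mapsto ne_i^-$, under which $g_i^k$ acts as $f_i$ (recall $g_i^k(ne_i^-)=b^{N_i}(g_i^k,n)e_i^-$ on $N_i(\eta_0^{t_i})$, which is where $f_i$ is defined by \eqref{k1}) and $\d_i$ agrees with its pullback by \eqref{metric}; note $2c\eta<\eta<\tfrac12\eta_0$.

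For the contraction estimates I would first note that $N_i(\eta_0^{t_i})e_i^-\subseteq\varphi_iN_i(J)e_i^-$ by \eqref{eq.J}, so that \eqref{contracting} (for the fixed $k$) and \eqref{metric} give $\d_i(f_i(n),f_i(n'))\le 2^{-(t_i+1)k}\,\d_i(n,n')<\d_i(n,n')$ for all $n,n'\in N_i(\eta_0^{t_i})$, the strict inequality because $t_i=\alpha_i(\v)>0$ and $k\ge 1$. Since $\d_i$ is left-invariant, this gives $f_i(N_i(s^{t_i}))\subseteq f_i(e_i)N_i(s^{t_i})$ for every $0<s\le\eta_0$; taking $s=\eta$ gives the second inclusion above, and $s=c\eta$ gives the componentwise form of \eqref{second}.

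The main obstacle is the outward inclusion $f_i(e_i)N_i((2c\eta)^{t_i})\subseteq f_i(N_i(\eta^{t_i}))$. Here I would use the lower bound \eqref{cfix}, which with \eqref{metric} reads $\d_i(f_i(x),f_i(y))\ge(2c)^{t_i}\d_i(x,y)$ for $x,y\in N_i(\eta^{t_i})$; in particular $f_i$ is injective on $N_i(\eta^{t_i})$. For each $0<\eta_-<\eta$, the closed ball $\overline{N_i(\eta_-^{t_i})}$ is a compact subset of $N_i(\eta^{t_i})$ on which $f_i$ is a homeomorphism onto its image, so by invariance of domain $f_i(N_i(\eta_-^{t_i}))$ is open and equals the interior of $f_i(\overline{N_i(\eta_-^{t_i})})$, while the topological boundary of $f_i(\overline{N_i(\eta_-^{t_i})})$ is contained in $f_i(\{x:\d_i(e_i,x)=\eta_-^{t_i}\})$, a set on which $\d_i(f_i(e_i),\cdot)\ge(2c)^{t_i}\eta_-^{t_i}$ by \eqref{cfix}. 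The connected set $f_i(e_i)N_i((2c)^{t_i}\eta_-^{t_i})=\{y:\d_i(f_i(e_i),y)<(2c)^{t_i}\eta_-^{t_i}\}$ contains the interior point $f_i(e_i)$ and misses that boundary, hence lies in $f_i(N_i(\eta_-^{t_i}))\subseteq f_i(N_i(\eta^{t_i}))$. Letting $\eta_-\nearrow\eta$ and using $(2c)^{t_i}\eta^{t_i}=(2c\eta)^{t_i}$ gives $f_i(e_i)N_i((2c\eta)^{t_i})=\bigcup_{\eta_-<\eta}f_i(e_i)N_i((2c)^{t_i}\eta_-^{t_i})\subseteq f_i(N_i(\eta^{t_i}))$, completing the componentwise claim; taking products over $i$ then yields \eqref{first} and \eqref{second}.

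The one point needing care in the last step is that \eqref{cfix} is asserted only on the open ball $N_i(\eta^{t_i})$, which forces the invariance-of-domain argument to be run on the exhausting family $\overline{N_i(\eta_-^{t_i})}$, $\eta_-\nearrow\eta$, rather than directly on $\overline{N_i(\eta^{t_i})}$; apart from this the argument is standard, and classical in the abelian factors $N_i\cong\br^{\dim N_i}$. The contraction inclusions of the second paragraph and the product reduction of the first are otherwise routine.
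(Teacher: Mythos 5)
Your proof is correct, and for the two ``inward'' inclusions (the right-hand inclusion of \eqref{first} and all of \eqref{second}) it is essentially the paper's argument: reduce to components via \eqref{neta} and apply the contraction \eqref{contracting}, transported to $N_i$ by \eqref{metric}, to see that $b^{N_i}(g_i^k,\cdot)$ is $1$-Lipschitz on $N_i(\eta^{t_i})$. Where you genuinely diverge is the first inclusion of \eqref{first}. The paper argues via the global structure of the rank-one flag variety: the choice of $\eta$ in \eqref{etachoice} guarantees that the ball $b^{N_i}(g_i^k,e_i)N_i(\eta^{t_i})e_i^-$ avoids the single point $g_i^ke_i^+$ missing from the chart $g_i^kN_ie_i^-$, so every $b^{N_i}(g_i^k,e_i)n$ with $n\in N_i((2c\eta)^{t_i})$ is of the form $b^{N_i}(g_i^k,n')$ for some $n'\in N_i$, and the lower bound \eqref{cfix} is then invoked to force $\d_i(e_i,n')\le \eta^{t_i}$. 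You make no use of \eqref{etachoice} at all and instead run a purely local topological argument: \eqref{cfix} gives injectivity, invariance of domain gives openness of the image, and a connectedness argument traps the small ball inside the image of a slightly shrunken ball, with the exhaustion $\eta_-\nearrow\eta$ correctly handling the fact that \eqref{cfix} is only asserted on the open ball. Both routes are sound; yours is a bit longer but more self-contained, and it has the additional merit of sidestepping the delicate point in the paper's version, where \eqref{cfix} is applied to the pair $(e_i,n')$ before $n'$ is known to lie in $N_i(\eta^{t_i})$ --- a step that, as written, needs exactly the kind of exhaustion/connectedness device you supply. (One microscopic overstatement: $f_i(N_i(\eta_-^{t_i}))$ need not literally \emph{equal} the interior of $f_i(\overline{N_i(\eta_-^{t_i})})$, but your argument only uses that it is open and that the topological boundary of the image of the closed ball lies in the image of the sphere, which you do establish.)
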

\begin{proof} Fix $1\le i\le \r$.
By \eqref{etachoice}, we have
$b^{N_i}(g_i^k, 
e_i
)N_i(\eta^{t_i}) e_i^-\subset g_i^kN_i e_i^-$ and hence
 $b^{N_i}(g_i^k, e_i
)N_i(\eta^{t_i})\subset b^{N_i}(g_i^k, N_i)$. 
Let $n\in N_i((2c\eta)^{t_i})$ be arbitrary.  There exists $n'\in N_i$  such that $b^{N_i}(g_i^k,e_i
)n= b^{N_i}(g_i^k,n')$.
We have, by \eqref{cfix},
\begin{align*}
    (2c)^{t_i}\mathsf d_i(e_i
,n') &\leq \mathsf d_i(b^{N_i}(g_i^k,
e_i
), b^{N_i}(g_i^k,n'))\\
    &=\mathsf d_i(b^{N_i}(g_i^k,e_i
), b^{N_i}(g_i^k,e_i
)n)=\mathsf d_i(e_i
,n)\leq(2c \eta)^{t_i}
\end{align*}
and hence $\d_i(e_i
, n')\le \eta^{t_i}$. It implies
$$b^{N_i}(g_i^k,e_i
)n=
b^{N_i}(g_i^k,n')\in b^{N_i} (g_i^k, N_i(\eta^{t_i})).$$
This proves the first inclusion in \eqref{first}. 

By \eqref{contracting} and \eqref{eq.J}, we have
$$\d_i (g_i^kne_i^-, g_i^k n'e_i^- )\le 2^{-k} \d_i(ne_i^-, n'e_i^-) \;\; \text{ for all $n, n'\in N_i(\eta^{t_i})$.}$$

In other words, for all $n,n'\in N_i(\eta^{t_i})$,
\be\label{qqq} \d_i ( b^{N_i}(g_i^k,n) ,b^{N_i}(g_i^k,n'))\le  2^{-k}\d_i(n, n'). \ee

Hence $b^{N_i}(g_i^k,\cdot)$ has Lipschitz constant less than $1$ on $N_i(\eta^{t_i})$, the right inclusion in \eqref{first}, as well as \eqref{second} follow.
\end{proof}

\begin{lem} \label{bbb} We have \be
b^N(g_0,b^N(g_0^k,v)N_{r_0}) \subset  b^N(g_0^k,v)N_{r_0} \quad \text{ for all }v\in N_\eta.\label{eq.er2}
\ee \end{lem}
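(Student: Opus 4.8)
The plan is to prove \eqref{eq.er2} one rank-one factor $G_i$ at a time, using that $g_0=h_0^p$ already contracts a neighbourhood of the attracting fixed point $\varphi_i^-=n_ie_i^-$ at the definite rate $2^{-(t_i+1)}$ provided by \eqref{contracting}, and that $\varphi_i^-$ is fixed by $g_i$. The product description \eqref{neta} of the quasi-balls is what makes this legitimate: $b^N(g_0^k,v)N_{r_0}=\prod_i b^{N_i}(g_i^k,v_i)N_i(r_0^{t_i})$, so it suffices to show $b^{N_i}(g_i,u_i)\in b^{N_i}(g_i^k,v_i)N_i(r_0^{t_i})$ for each coordinate.

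\textbf{Setup and core estimate.} Fix $v=(v_i)_i\in N_\eta$. By \eqref{neta} each $v_i\in N_i(\eta^{t_i})\subset N_i(\eta_0^{t_i})$ since $\eta<\eta_0$, so \eqref{k1} gives $b^{N_i}(g_i^k,v_i)\in n_iN_i(r_0^{t_i}/4)$; in particular $b^N(g_0^k,v)\in b^N(g_0^k,N_{\eta_0})$, so the third relation imposed on $k$, namely $g_0\,b^N(g_0^k,N_{\eta_0})N_{r_0}\subset NLN^+$, makes $b^N(g_0,\cdot)$ well-defined on $b^N(g_0^k,v)N_{r_0}$ and satisfies $b^{N_i}(g_i,u_i)e_i^-=g_i(u_ie_i^-)$ there. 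Now let $u=(u_i)_i\in b^N(g_0^k,v)N_{r_0}$; by \eqref{neta} this means $\d_i(u_i,b^{N_i}(g_i^k,v_i))<r_0^{t_i}$, whence $\d_i(u_i,n_i)<r_0^{t_i}+r_0^{t_i}/4=\tfrac54 r_0^{t_i}$ and, in particular, $u_i\in n_iN_i(\tfrac54 r_0^{t_i})$. Granting (see below) that $u_ie_i^-$ and $\varphi_i^-$ lie in the region $\varphi_iN_i(J)e_i^-$ on which \eqref{contracting} holds, and using that $g_i$ fixes $\varphi_i^-=n_ie_i^-$, we get
\[
\d_i\big(b^{N_i}(g_i,u_i),\,n_i\big)\le 2^{-(t_i+1)}\,\d_i(u_i,n_i)<\tfrac58\,2^{-t_i}\,r_0^{t_i},
\]
and hence, by the triangle inequality for $\d_i$ together with $\d_i(n_i,b^{N_i}(g_i^k,v_i))<r_0^{t_i}/4$,
\[
\d_i\big(b^{N_i}(g_i,u_i),\,b^{N_i}(g_i^k,v_i)\big)<\Big(\tfrac58\,2^{-t_i}+\tfrac14\Big)r_0^{t_i}\le\tfrac78\,r_0^{t_i}<r_0^{t_i}.
\]
Thus $b^{N_i}(g_i,u_i)\in b^{N_i}(g_i^k,v_i)N_i(r_0^{t_i})$ for every $i$, and taking the product over $i$ and using \eqref{neta} once more gives $b^N(g_0,u)\in b^N(g_0^k,v)N_{r_0}$, which is \eqref{eq.er2}.

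\textbf{Main obstacle.} The only delicate point is the parenthetical claim that the whole ball $b^{N_i}(g_i^k,v_i)N_i(r_0^{t_i})$ — which, by the displayed estimate, sits inside the $\d_i$-ball of radius $\tfrac54 r_0^{t_i}$ about $\varphi_i^-$ — is contained in $\varphi_iN_i(J)e_i^-$, so that \eqref{contracting} may be invoked. This is exactly what the numerical relations $r_0=\tfrac34 r_1$ and $r_1<\min_i 2^{-1-1/t_i}\eta_0$, combined with \eqref{eq.J} (and the freedom to take $J$ large), were arranged in advance to guarantee; I expect this region-bookkeeping, rather than the contraction estimate itself (which is soft), to be the step that needs care. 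Finally, note that it is essential to argue factor by factor with the genuine Euclidean-type metrics $\d_i$: the quasi-triangle inequality for $d_\v$ carries a constant $C\ge1$ that would be too lossy to close the inequality $\tfrac58\,2^{-t_i}+\tfrac14<1$.
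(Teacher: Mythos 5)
Your argument is correct and is essentially the paper's own proof: both work factor by factor with the genuine metrics $\d_i$, use \eqref{k1} to place $b^{N_i}(g_i^k,v_i)$ within $\d_i$-distance $r_0^{t_i}/4$ of $n_i$, and then apply the contraction \eqref{contracting} anchored at the fixed point $\varphi_i^-=n_ie_i^-$ together with the triangle inequality (the paper packages this as a sandwich $n_iN_i(3r_0^{t_i}/4)\subset b^{N_i}(g_i^k,v)N_i(r_0^{t_i})\subset n_iN_i(3r_0^{t_i}/2)$ with $g_i$ mapping the outer ball into the inner one, rather than tracking an individual point, but the estimates are the same). The "region bookkeeping" you flag is likewise the only delicate point in the paper's proof, which handles it exactly as you anticipate via $3r_0^{t_i}/2<\eta_0^{t_i}$ and \eqref{eq.J}.
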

\begin{proof}

As $\d_i$ is left-invariant, the choice of $k$ as in \eqref{k1} implies that for any $v\in N_i(\eta^{t_i})$, we have
$$b^{N_i} (g_i^k, v)N_i(r_0^{t_i})\supset n_i N_i(3 r_0^{t_i}/4) \quad\text{ and} $$
$$b^{N_i}(g_i^k, N_i({\eta}^{t_i})) N_i({r_0}^{t_i}) \subset n_i N_i(3 r_0^{t_i}/2).$$

Since $r_1<\min_i \frac{1}{2^{1+(1/t_i)}}\eta_0$ and hence $3 r_0^{t_i}/2 <\eta_0^{t_i}$ by the definition of $r_0$, it follows from \eqref{contracting} and the property $g_i\varphi_i^-=\varphi_i^-$
that 
$$g_in_i N_i(3 r_0^{t_i}/2) \subset n_i N_i(3r_0^{t_i}/4).$$

Therefore, for any $v\in N_i(\eta^{t_i})$,
$$b^{N_i}(g_i,b^{N_i}(g_i^k,v)N_i({r_0}^{t_i})) \subset {n_i}N_i(3 r_0^{t_i}/4)\subset b^{N_i} (g_i^k, v)N_i(r_0^{t_i}). $$
This proves the lemma. 
\end{proof}

\subsection*{Definition of $V_0$.}
Since the following \eqref{eq.r1'}- \eqref{eq.gk3} are all open conditions which have been proved at $g=g_0$ in  \eqref{eq.l2},\eqref{eq.r1}, \eqref{eq.sqz} and Lemmas \ref{eq.Jac0} and \ref{bbb},
we may choose a bounded neighborhood $V_0$ of $g_0$ in $G$ such that those conditions
continue to hold for all $g\in V_0$, $u\in N_{r_0}b^N(g^k,N_\eta)$ and $v\in N_\eta$:
\begin{align}
&gN_{r_1}y_{g_0}\subset N_{r_1/2}y_{g_0},\label{eq.r1'}\\
&N_{r_1/2}y_{g_0}\subset N_{r_0}\,g^k N_\eta e^- \subset N_{r_1}y_{g_0},\label{eq.sqz'}\\
& b^{AM}(g,u)\in\cal O_{\ell_0},\label{eq.l'}\\
& b^N(g^k,e)N_{2c\eta }\subset b^N(g^k,N_\eta)\subset b^N(g^k,e)N_{\eta}\quad \text{ and }\label{eq.er}\\
&b^N(g^k,N_{c\eta})\subset b^N(g^k,e)N_{c\eta}.
\\ \label{eq.Jac}
&2c <|\op{Jac}_u b^N(g,\cdot)|<1,\quad 2c <|\op{Jac}_v b^N(g^k,\cdot)|<1\\
& b^N(g,b^N(g^k,v)N_{r_0}) \subset  b^N(g^k,v)N_{r_0}\label{eq.er2.new}\\
& g b^N(g^k,N_\eta)N_{r_0} \subset NLN^+ .\label{eq.gk3}
\end{align}

\subsection*{Definition of $R$, $\cal B_L$, and $\cal B_{N^+}$}
Since the sets  $V_0$, $N_\eta$ and $\{b^N(g^k,N_\eta)N_{r_0} :g\in V_0\}$ are bounded, it follows from \eqref{eq.gk3} that there exist $R>0$ and bounded symmetric neighborhoods
$ \cal B_L\subset L$ and $\cal B_{N^+}\subset N^+$ of $e$ such that for all $g\in V_0$,
\be\label{eq.RRR}
g^k N_\eta\subset N_R \cal B_L \cal B_{N^+}\quad \text{ and}\quad  g b^N(g^k,N_\eta)N_{r_0}\subset N_R {\cal B_L} \cal B_{N^+}.
\ee

\subsection*{Definition of $\beta$, $R'$ and $\kappa_*$} 
We fix $\beta>0 $ such that 
\be\label{betad} a_t^{-1}N_R N_\eta a_t N_{c\eta} \subset N_{2c\eta}  \quad\text{for all $t\ge \beta$.}\ee 
We also fix $R'>0$ so that
\be\label {Rprime} \bigcup_{t\in [-\beta, \beta]} N_RN_\eta (a_{t} N_{\eta} N_{R}N_{ c\eta}a_{t}^{-1})\subset N_{R'}.\ee 

Recalling the notation from Lemma \ref{ka1}, we set
\be\label{ka}\kappa_*:=\kappa_* (\v,\beta, c\eta, R') =\frac{m(N_{R'})}{ m(N_{c\eta})} {\kappa_{\v}}  e^{\|2\rho\|\beta}.\ee

\subsection*{Definition of $\Om$, $\tOm$, $\cal O_{N^+}$,  $Q$, $Q_\bot$ and $T_0$}
Let $E$ be an $N$-invariant $\mu$-conull set as in Lemma \ref{OL}.
We fix a compact subset $\Omega\subset E$ with $\mu(\Omega)>0$, and define
\begin{equation}\label{eq.def_tOm}
	\tOm:= \Om \cal B_L \cal B_{N^+}.
\end{equation}
Since $ \mu(\tOm)=\mu(\tOm \cap E) $, we can find a compact set $\Om \subset Q \subset \tOm \cap E $
satisfying
\begin{equation}\label{eq.mu(Q)}
	\mu(\tOm - Q) < \frac{c}{16\kappa_{0} \kappa_*}.
\end{equation}
Since $ Q \subset E $, we know $ \mu(Q\cal O_{\ell_0}^{-1}) = 0 $.
By the uniform convergence theorem, there exists a bounded symmetric neighborhood $ \cal O_{N^+} \subset \cal B_{N^+} $ of $ e $ for which the set
\begin{equation}\label{eq.defQbot}
	Q_\bot := Q \cal O_{N^+}\cal O_{\ell_0}^{-1}
\end{equation}
satisfies
\begin{equation}\label{eq.mu(Qbot)}
	\mu(Q_\bot) < \frac{c^2}{16 \kappa_{\v} \kappa_{0} \kappa_*} \mu(\Omega).
\end{equation}

We fix $T_0>0$ such that
\begin{equation}\label{eq.T1x}
\op{Ad}_{a_t} \cal B_{N^+}\subset {\cal O}_{N^+}\quad \text{ for all }  t\ge T_0.
\end{equation}

\subsection*{Definition of $T_1$, $\Om_1$, $ \Om_2 $, $ \Xi $ and $\Theta$} Since $\cal S_x(\v)=\cal S_\mu(\v)$ for $ \mu $-a.e.~$ x \in \Gamma \ba G $, we can  find  $T_1>T_0$ so that the set 
\begin{equation}\label{eq.def_tOm1}
	\tOm_1 := \{x\in \tOm:  \op{Stab}_G(xa_t)\cap V_0\neq\emptyset\text{ for some $T_0\le t\le T_1$}\}
\end{equation}
satisfies
\begin{equation}\label{eq.Om2x}
 \mu (\tOm - \tOm_1) < \frac{1}{4}  \mu(\Omega).
\end{equation}
Set
\begin{equation}\label{eq.def_Om1}
	\Om_1:=\Om \cap \tOm_1.
\end{equation}

Since $ \Om \subset \tOm $, we have
\begin{equation}\label{eq.mu(Om1)}
	\mu(\Om_1) \geq \mu(\Om) - \mu (\tOm - \tOm_1) > \frac34 \mu(\Om).
\end{equation}

We define
\begin{equation}\label{eq.defXi}
	\Xi:=\left\{x\in\Gamma \ba G: \exists t>0 \text{ s.t } 
		{ \int_{a_t N_{ r_0}a_t^{-1}}\mathbbm{1}_{Q_\bot}(x n)\,dn}\geq 2c {\int_{a_t N_{r_0}a_t^{-1}}\mathbbm{1}_{Q}(x n)\,dn}
	\right\}.
\end{equation}
Set
\begin{equation}\label{eq.defOm2}
	\Om_2 := \Om_1 -\Xi.
\end{equation}

Recall the notation for distance $d_0$ on $ N $ and the corresponding metric balls $B_0(r)$, $r>0$, from Proposition \ref{REH}.
Consider the following set
\begin{equation}\label{eq.defTheta}
	\Theta:=\left\{x\in \Gamma \ba G: \exists r>0 \text{ s.t }
		{ \int_{B_0(r)}\mathbbm{1}_{\tOm \cap \Xi}(x n)\,dn}\geq \frac{c}{\kappa_*} {\int_{B_0(r)}\mathbbm{1}_{\Om_2}(x n)\,dn}
	\right\}.
\end{equation}

\begin{prop}\label{half}
We have 
$$
	\mu(\Om_2-\Theta) > \frac{1}{4}\mu(\Om ) .$$
\end{prop}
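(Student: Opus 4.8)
The plan is to combine the measure estimates $\mu(\tOm - Q) < \frac{c}{16\kappa_0 \kappa_*}$, $\mu(Q_\bot) < \frac{c^2}{16\kappa_{\v}\kappa_0\kappa_*}\mu(\Om)$, and $\mu(\Om_1) > \frac34\mu(\Om)$ with the maximal ratio inequality (Lemma \ref{maxi}) applied twice: once to bound $\mu(\Xi)$-type sets and once to bound $\Theta$-type sets. Concretely, I would first estimate $\mu(\Om_1 \cap \Xi)$. By definition, $\Xi$ consists of points $x$ where the $a_t N_{r_0} a_t^{-1}$-average of $\mathbbm 1_{Q_\bot}$ dominates that of $\mathbbm 1_Q$ with ratio $\ge 2c$. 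After conjugating by $a_t$ (which by \eqref{cong} sends $N_{r_0}$ to $a_t N_{r_0} a_t^{-1}$ up to reparametrizing the radius), this becomes a maximal-function statement for $B_\v$-balls, so Lemma \ref{maxi} with $\alpha = 2c$, $\Om_1 = Q_\bot$, $\Om_2 = \tOm$ (note $Q \subset \tOm$ so the average of $\mathbbm 1_Q$ is at most that of $\mathbbm 1_{\tOm}$) gives $\mu(\tOm \cap \Xi) \le 2\kappa_{\v}(2c)^{-1}\mu(Q_\bot) = \kappa_{\v} c^{-1}\mu(Q_\bot)$, which by \eqref{eq.mu(Qbot)} is at most $\frac{c}{16\kappa_0\kappa_*}\mu(\Om)$. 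This is small.

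Next I would estimate $\mu(\Om_2 \cap \Theta)$, again via Lemma \ref{maxi}, this time with the distance $d_0$ (the honest metric case, $\v = 0$), averaging over balls $B_0(r)$, with $\alpha = c/\kappa_*$, numerator set $\tOm \cap \Xi$ and denominator set $\Om_2$. Since $\mu(\Om_2) \le \mu(\tOm) < \infty$, the lemma yields $\mu(\Om_2 \cap \Theta) \le 2\kappa_0 (c/\kappa_*)^{-1} \mu(\tOm \cap \Xi) = 2\kappa_0 \kappa_* c^{-1} \mu(\tOm \cap \Xi)$. Plugging in the bound from the previous paragraph, $\mu(\tOm \cap \Xi) \le \frac{c}{16\kappa_0\kappa_*}\mu(\Om)$, gives $\mu(\Om_2 \cap \Theta) \le \frac18\mu(\Om)$.

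Finally I would put the pieces together by a chain of subtractions. Starting from $\mu(\Om_1) > \frac34\mu(\Om)$ and $\Om_2 = \Om_1 - \Xi$, I have $\mu(\Om_2) \ge \mu(\Om_1) - \mu(\Om_1 \cap \Xi)$. Since $\Om_1 \subset \Om \subset \tOm$, we have $\Om_1 \cap \Xi \subset \tOm \cap \Xi$, so $\mu(\Om_1 \cap \Xi) \le \frac{c}{16\kappa_0\kappa_*}\mu(\Om) < \frac18\mu(\Om)$ (using $c < 1/2$, $\kappa_0, \kappa_* \ge 1$, so the constant is far less than $1/8$; in fact one only needs it less than, say, $\frac18$). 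Hence $\mu(\Om_2) > \frac34\mu(\Om) - \frac18\mu(\Om) = \frac58\mu(\Om)$. Then $\mu(\Om_2 - \Theta) \ge \mu(\Om_2) - \mu(\Om_2 \cap \Theta) > \frac58\mu(\Om) - \frac18\mu(\Om) = \frac12\mu(\Om) > \frac14\mu(\Om)$, as desired.

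The main obstacle — and the one place I would be careful — is checking that the averaging sets $a_t N_{r_0} a_t^{-1}$ appearing in the definition of $\Xi$ genuinely fit the hypotheses of Lemma \ref{maxi}, i.e.\ that they are exactly the balls $B_\v$ of some radius centered at $e$. By \eqref{cong}, $a_t N_{r_0} a_t^{-1} = a_t B_\v(r_0) a_t^{-1} = B_\v(e^{-t} r_0)$ (with the sign of the exponent depending on the convention for $a_t$ versus $\delta_\lambda$), so as $t$ ranges over $(0,\infty)$ these are precisely the balls $B_\v(R)$ with $R$ ranging over an interval; thus $\Xi$ is contained in the set $E^\dagger$ of Lemma \ref{maxi} with the appropriate $\alpha$, and similarly for $\Theta$ with the $d_0$-balls. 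One should double-check the direction of the scaling so that small $r$ corresponds to large $t$ (or vice versa) and that the union over all $t>0$ indeed covers all radii; once that bookkeeping is settled, the two applications of the maximal inequality and the arithmetic above are routine. I would also note that the constant $c$ being strictly less than $1/2$ together with $\kappa_0, \kappa_* \ge 1$ makes all the "small" terms comfortably below $1/8$, giving room to spare in the final inequality.
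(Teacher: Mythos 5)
Your overall architecture matches the paper's: two applications of the maximal ratio inequality (Lemma \ref{maxi}), first to control $\Xi$ and then to control $\Theta$, followed by subtraction arithmetic. But your first application contains a genuine error. The set $\Xi$ is defined by the inequality $\int \mathbbm{1}_{Q_\bot}(xn)\,dn \ge 2c\int\mathbbm{1}_{Q}(xn)\,dn$ over the balls $a_tN_{r_0}a_t^{-1}=B_\v(e^tr_0)$, so to place $\Xi$ inside the set $E^\dagger$ of Lemma \ref{maxi} you must take the \emph{denominator} set to be $Q$, not $\tOm$. Your parenthetical ``$Q\subset\tOm$ so the average of $\mathbbm{1}_Q$ is at most that of $\mathbbm{1}_{\tOm}$'' runs the implication backwards: if the $Q_\bot$-average dominates $2c$ times the $Q$-average, it need not dominate $2c$ times the (larger) $\tOm$-average, so $\Xi$ is not contained in the $E^\dagger$ associated with denominator set $\tOm$, and the bound $\mu(\tOm\cap\Xi)\le \kappa_{\v} c^{-1}\mu(Q_\bot)$ does not follow. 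What the lemma actually gives (with denominator $Q$, which is compact and hence of finite measure) is $\mu(Q\cap\Xi)\le \kappa_{\v} c^{-1}\mu(Q_\bot) < \frac{c}{16\kappa_{0}\kappa_*}\mu(\Om)$.

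This is precisely why the estimate \eqref{eq.mu(Q)} on $\mu(\tOm-Q)$ exists: one passes from $Q\cap\Xi$ to $\tOm\cap\Xi$ by writing $\mu(\tOm\cap\Xi)\le\mu(\tOm-Q)+\mu(Q\cap\Xi)<\frac{c}{8\kappa_{0}\kappa_*}\mu(\Om)$. You list that estimate in your plan but never use it --- a symptom of the gap. Once this step is repaired the rest of your argument goes through exactly as in the paper, with slightly larger constants: the second application of Lemma \ref{maxi} (with $d_0$-balls, denominator $\Om_2$, $\alpha=c/\kappa_*$) gives $\mu(\Om_2\cap\Theta)\le\frac{2\kappa_{0}\kappa_*}{c}\mu(\tOm\cap\Xi)<\frac14\mu(\Om)$ rather than your $\frac18\mu(\Om)$, and $\mu(\Om_2)\ge\mu(\Om_1)-\mu(\tOm\cap\Xi)>\frac12\mu(\Om)$, so $\mu(\Om_2-\Theta)>\frac14\mu(\Om)$ still follows. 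Your bookkeeping on the scaling $a_tN_{r_0}a_t^{-1}=B_\v(e^tr_0)$ is correct and is the same observation the paper makes.
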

\begin{proof} Since $a_tN_{r_0}a_{t}^{-1}=B_\v(e^t r_0)$ for any $t, r_0>0$, we may apply the maximal ratio inequality (Lemma \ref{maxi}) and \eqref{eq.mu(Qbot)} and get
$$\mu(Q\cap \Xi)\le \frac{2\kappa_{\v}}{2c} \mu(Q_\bot)  < \frac{\kappa_{\v}}{c}\cdot \frac{c^2}{16 \kappa_{\v} \kappa_{0} \kappa_*} \mu(\Om)  = \frac{c}{16\kappa_{0} \kappa_*}\mu(\Omega).$$
Therefore,
by \eqref{eq.mu(Q)},
\[ \mu(\tOm \cap \Xi) \leq \mu(\tOm - Q) + \mu (Q \cap \Xi) < \frac{c}{8 \kappa_{0} \kappa_*}\mu(\Om). \]
By \eqref{eq.mu(Om1)}, we have
\[ \mu(\Om_2)=\mu(\Om_1-\Xi) \geq \mu(\Om_1)-\mu(\tOm \cap \Xi) \geq \left(\frac34-\frac{c}{8 \kappa_{0} \kappa_*}\right)\mu(\Om) > \frac12 \mu(\Om). \]

Employing the maximal ratio inequality yet again, we deduce
\[ \mu(\Om_2 \cap \Theta) \leq \frac{2\kappa_{0} \kappa_*}{c} \mu(\tOm \cap \Xi) < \frac{2\kappa_{0} \kappa_*}{c}\cdot \frac{c}{8\kappa_{0} \kappa_*}\mu(\Om) = \frac14 \mu(\Om), \]
implying the claim by \eqref{eq.mu(Om1)}.
\end{proof}

\subsection*{Choice of $x_0$, $R_1$, $R_2$ and $D$.}
We fix $R_1, R_2>0$ so that $N_R\subset B_0(R_1)$ and
\be\label{rdouble}\bigcup_{0<t\le T_1}a_t B_0(R_1)a_t^{-1}\subset B_0(R_2) .\ee
We choose $x_0$ and $D$ as in the following lemma:
\begin{lemma}\label{lem.x0}
There exist $x_0\in\Gamma \ba G$ and a ball $D=B_0(R_{x_0})$ with $ R_{x_0}>R_2 $ such that
\begin{equation*}\frac{\int_{ D}\mathbbm{1}_{\tOm \cap \Xi}(x_0n)\,dn}{\int_{D}\mathbbm{1}_{\Om_2}(x_0n)\,dn}<\frac{c}{\kappa_*}, \quad \text{and} \quad
\frac{\int_{\partial_{R_2} D}\mathbbm{1}_{\Om_2}(x_0n)\,dn}{\int_{D}\mathbbm{1}_{\Om_2}(x_0n)\,dn}<\frac{1}{2}
\end{equation*}
where $\partial_r B_0(R_{x_0}):=B_0(R_{x_0})-B_0(R_{x_0}-r)$.
\end{lemma}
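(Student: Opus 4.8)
The plan is to produce $x_0$ inside the set $\Om_2-\Theta$, which has positive measure by Proposition~\ref{half} (recall $\mu(\Om)>0$), and to choose the radius $R_{x_0}$ afterwards using the polynomial growth of the metric balls $B_0(r)$. The point is that once $x_0\notin\Theta$, the first of the two desired inequalities holds at \emph{every} radius, simply because it is the negation of the condition defining $\Theta$ in \eqref{eq.defTheta}; so only the second (the shell estimate) constrains the choice of $R_{x_0}$.

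First I would record that $\mu(\Om_2)\ge\mu(\Om_2-\Theta)>\tfrac14\mu(\Om)>0$ and that $\Om_2\subset\Om$ is bounded, so Lemma~\ref{lem.x1} applies to the bounded Borel set $\Om_2$ with the vector $0$ and the distance $d_0$. This furnishes a $\mu$-conull set $Z'$ (depending on $\Om_2$) such that for every $x\in Z'$, every $r>0$ and every $\e>0$ there is a sequence $t_i\to\infty$ with $\int_{B_0(t_i+r)}\mathbbm 1_{\Om_2}(xn)\,dn\le(1+\e)\int_{B_0(t_i)}\mathbbm 1_{\Om_2}(xn)\,dn$; moreover, by the construction of the conull set in the proof of Lemma~\ref{lem.x1}, each $x\in Z'$ satisfies $m(\T_{\Om_2}(x)\cap B_0(s_x))>0$ for some $s_x>0$, so these denominators are positive for all large $t_i$, which is needed for the ratios in the statement to make sense.

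Next, since $Z'$ is conull, I would pick $x_0\in(\Om_2-\Theta)\cap Z'$, a nonempty set. Because $x_0\notin\Theta$, condition \eqref{eq.defTheta} fails at $x_0$, i.e.\ $\int_{B_0(r)}\mathbbm 1_{\tOm\cap\Xi}(x_0n)\,dn<\tfrac{c}{\kappa_*}\int_{B_0(r)}\mathbbm 1_{\Om_2}(x_0n)\,dn$ for all $r>0$; this is exactly the first inequality, valid for any choice of $R_{x_0}$. To arrange the second, apply the recurrence property above with $r=R_2$ and $\e=\tfrac12$ to obtain a sequence $t_i\to\infty$ with $\int_{B_0(t_i+R_2)}\mathbbm 1_{\Om_2}(x_0n)\,dn\le\tfrac32\int_{B_0(t_i)}\mathbbm 1_{\Om_2}(x_0n)\,dn$ and the right-hand integral positive; fixing one such $t_i$ (automatically $t_i>0$) and setting $R_{x_0}:=t_i+R_2>R_2$ and $D:=B_0(R_{x_0})$, one has $B_0(R_{x_0}-R_2)=B_0(t_i)$, so subtracting gives $\int_{\partial_{R_2}D}\mathbbm 1_{\Om_2}(x_0n)\,dn\le\tfrac13\int_{D}\mathbbm 1_{\Om_2}(x_0n)\,dn<\tfrac12\int_{D}\mathbbm 1_{\Om_2}(x_0n)\,dn$, which is the second inequality.

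I do not expect a genuine obstacle here: the quantitative content is entirely carried by Lemma~\ref{lem.x1} and Proposition~\ref{half}. The only points requiring care are (i) checking that the $\Om_2$-integrals in the denominators are positive, which is why one intersects $\Om_2-\Theta$ with the conull set $Z'$ coming from Lemma~\ref{lem.x1} rather than using $\Om_2-\Theta$ alone, and (ii) observing that both inequalities can be met at the same radius, because the first one is radius-independent once $x_0\notin\Theta$ while the second is secured by a single application of the recurrence statement.
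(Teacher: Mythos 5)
Your proof is correct and takes essentially the same route as the paper: choose $x_0\in\Om_2-\Theta$ (nonempty by Proposition~\ref{half}), note that the first inequality is just the negation of the defining condition \eqref{eq.defTheta} of $\Theta$ and hence holds at every radius, and invoke Lemma~\ref{lem.x1} to find a radius making the shell estimate hold. Your additional care in intersecting with the conull set $Z'$ from Lemma~\ref{lem.x1} (so that the recurrence statement and the positivity of the denominators actually apply to $x_0$) is a detail the paper leaves implicit, and is a welcome tightening rather than a deviation.
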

\begin{proof} Choose any $x_0\in \Om_2-\Theta$, which is possible by Proposition \ref{half}.
By the definition of $ \Theta $, $x_0$ satisfies the first inequality for any ball $D=B_0(R) $. By Lemma \ref{lem.x1}, there exists $ R_{x_0}>R_2 $ satisfying the second inequality, as required.
\end{proof}

 For any $X\subset\Ga\ba G$, define the subset $\T_{X}\subset N$  by
$$
\T_{X}:=\{n\in N : x_0n\in  X\}.
$$

\subsection*{Definition of $t_u, a_u, g_u$} By the definition of $\Om_1$ in \eqref{eq.def_tOm1},
for each $u\in \T_{\Om_1}$, we can choose $T_0\le t_u\le T_1$ such that 
$$ \op{Stab}_G(x_0u   a_{t_u})\cap V_0\neq\emptyset.$$
We set $a_u:=a_{t_u}$ for the sake of simplicity, and choose
$$g_u\in \op{Stab}_G(x_0ua_u)\cap V_0.$$

\begin{lemma}\label{lem.P1x}
For  $u\in \T_{\Om_1}$, we have $ua_ub^N(g_u^k,N_\eta)a_u^{-1}\subset \T_{\Xi}$.
\end{lemma}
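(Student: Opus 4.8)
The goal is to show that if $x_0 u a_u$ has stabilizer meeting $V_0$ via $g_u$, then translating $u$ by the $N$-piece $a_u b^N(g_u^k,N_\eta)a_u^{-1}$ lands in $\T_{\Xi}$, i.e.\ each point $x_0 u a_u b^N(g_u^k, w) a_u^{-1}$ with $w\in N_\eta$ has a witness $t>0$ for membership in $\Xi$. The natural strategy is to use $t = t_u$ itself (so that the averaging set in the definition of $\Xi$ is $a_{t_u} N_{r_0} a_{t_u}^{-1}$) and to exploit the loxodromic element $g_u \in \op{Stab}_G(x_0 u a_u)$: conjugating by $g_u^k$ and using that $g_u^k$ strongly contracts toward $y_{g_0}$, one should be able to compare the integral of $\mathbbm 1_{Q_\bot}$ over $a_{t_u} N_{r_0} a_{t_u}^{-1}$ (translated appropriately) with the integral of $\mathbbm 1_Q$ over the same ball, with the constant $2c$ coming out of the Jacobian bound \eqref{eq.Jac} and the definition of $Q_\bot = Q \cal O_{N^+}\cal O_{\ell_0}^{-1}$.

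First I would fix $w\in N_\eta$ and set $z := x_0 u a_u b^N(g_u^k,w) a_u^{-1}$; the claim is $z\in\Xi$, witnessed by $t=t_u$. Since $g_u$ stabilizes $x := x_0 u a_u$, we have $x g_u = x$, hence $x g_u^k = x$. The point is to rewrite $z a_u \cdot (\text{elements of } N_{r_0}) = x \cdot b^N(g_u^k, w)\cdot N_{r_0}$ and push the $g_u^k$ on the left: using $x = x g_u^k$ and the Bruhat decomposition $g_u^k n = b^N(g_u^k,n) b^{AM}(g_u^k,n)b^{N^+}(g_u^k,n)$, one rewrites $x \,b^N(g_u^k,w)\, n' = x g_u^k \cdot(\text{something})$. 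More precisely, for $v \in N_\eta$ we have $b^N(g_u^k, v) = $ the $N$-part of $g_u^k v$, so $g_u^k v \in b^N(g_u^k,v)\, L\, N^+$, and the change of variables $v\mapsto b^N(g_u^k,v)$ on $N_\eta$ has Jacobian bounded below by $2c$ by \eqref{eq.Jac}. Combining this with \eqref{eq.er} (which says $b^N(g_u^k,e)N_{2c\eta}\subset b^N(g_u^k,N_\eta)$) and \eqref{eq.er2.new} (the $N_{r_0}$-translates are preserved by $b^N(g_u,\cdot)$), together with the containment $g_u b^N(g_u^k,N_\eta)N_{r_0}\subset N_R\cal B_L\cal B_{N^+}$ from \eqref{eq.gk3}/\eqref{eq.RRR} and the $T_0$-choice \eqref{eq.T1x} ensuring $\op{Ad}_{a_u}\cal B_{N^+}\subset\cal O_{N^+}$, one tracks that a point $x_0 u a_u \cdot b^N(g_u^k,w)\cdot n$ lying in $\tOm$ (resp.\ $Q$) corresponds, after the $g_u^k$-pushforward, to a point in $\Om$ (resp.\ something controlled by $Q\cal O_{N^+}$), while the $L$-part is absorbed in $\cal O_{\ell_0}$ by \eqref{eq.l'}. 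The upshot should be an inequality of the form $\int_{a_{t_u}N_{r_0}a_{t_u}^{-1}}\mathbbm 1_{Q_\bot}(z n)\,dn \geq 2c\int_{a_{t_u}N_{r_0}a_{t_u}^{-1}}\mathbbm 1_Q(z n)\,dn$, which is exactly the defining condition for $z\in\Xi$.

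The main obstacle I anticipate is the careful bookkeeping of the change of variables: one must verify that the $N$-coordinate map $v \mapsto b^N(g_u^k, v)$ carries the small ball where $\mathbbm 1_Q$ is being integrated onto (a superset of) the ball where $\mathbbm 1_{Q_\bot}$ is integrated, with the correct constant $2c$ surviving both the Jacobian estimate \eqref{eq.Jac} and the Besicovitch-type constant bookkeeping, and simultaneously that the $L$- and $N^+$-parts produced by the Bruhat decomposition of $g_u^k v$ stay inside $\cal O_{\ell_0}$ and $\cal O_{N^+}$ respectively (this is where \eqref{eq.l'}, \eqref{eq.RRR}, \eqref{eq.T1x} and the definition \eqref{eq.defQbot} of $Q_\bot$ all get used in concert). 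A secondary subtlety is that $g_u\in V_0$ rather than $g_u=g_0$, so every estimate must be the \emph{open} version valid throughout $V_0$; this is precisely why $V_0$ was defined via the open conditions \eqref{eq.r1'}--\eqref{eq.gk3}, so in practice it costs nothing beyond citing the right inclusion. Once the integral inequality is in place, $z\in\Xi$ is immediate, and since $w\in N_\eta$ was arbitrary we conclude $ua_ub^N(g_u^k,N_\eta)a_u^{-1}\subset\T_{\Xi}$.
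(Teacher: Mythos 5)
Your proposal has the right skeleton --- witness $t=t_u$ in the definition \eqref{eq.defXi}, use of the stabilizing element $g_u\in V_0$, a Bruhat decomposition whose $AM$-part lands in $\cal O_{\ell_0}$ and whose $N^+$-part is crushed into $\cal O_{N^+}$ by \eqref{eq.T1x}, the pointwise bound $\mathbbm 1_Q\le\mathbbm 1_{Q_\bot}\circ(\cdot)$ coming from \eqref{eq.defQbot}, and a Jacobian factor $2c$ --- but the central mechanism is misidentified, and as stated the step would not close. Writing $v_0':=b^N(g_u^k,v_0)$, the $\Xi$-condition for $z=x_0ua_uv_0'a_u^{-1}$ compares two integrals over the \emph{same} ball $a_uv_0'N_{r_0}a_u^{-1}$ (up to translation by $z$), so what is needed is a map of $v_0'N_{r_0}$ \emph{into itself} with Jacobian bounded below by $2c$. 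The paper gets this from a single application of $g_u$ (not $g_u^k$): using $x_0ua_ug_u=x_0ua_u$ and decomposing $g_u(v_0'n)$ for $n\in N_{r_0}$, the relevant $N$-map is $n\mapsto b^N(g_u,v_0'n)$, whose image is contained in $v_0'N_{r_0}$ precisely by \eqref{eq.er2.new} and whose Jacobian is $>2c$ on $N_{r_0}b^N(g_u^k,N_\eta)$ by the first half of \eqref{eq.Jac}. The role of $g_u^k$ is only to locate the center $v_0'$ of the averaging ball.

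Your proposed ``$g_u^k$-pushforward'' of the map $v\mapsto b^N(g_u^k,v)$ on $N_\eta$ does not do this job: it relates integrals over (conjugates of) $N_\eta$ based at $x_0u$ to integrals over $b^N(g_u^k,N_\eta)$, not two integrals over the ball $v_0'N_{r_0}$ based at $z$, and neither a Jacobian bound for $b^N(g_u^k,\cdot)$ on $v_0'N_{r_0}$ nor the containment $b^N(g_u^k,v_0'N_{r_0})\subset v_0'N_{r_0}$ is among the open conditions \eqref{eq.r1'}--\eqref{eq.gk3} defining $V_0$. Relatedly, your requirement that the map carry the $Q$-ball onto a \emph{superset} of the $Q_\bot$-ball is backwards: since $\mathbbm 1_{Q_\bot}\ge 0$, the chain of inequalities needs the image of the averaging ball to be a \emph{subset} of it, so that the $Q_\bot$-integral over the image is dominated by the $Q_\bot$-integral over the whole ball. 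With the map replaced by $b^N(g_u,v_0'\,\cdot\,)$ and the containment direction corrected, your argument becomes the paper's proof.
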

\begin{proof}
Let $u\in\T_{\Om_1}$ and $v_0\in N_\eta$ be arbitrary. Setting $v_0':=b^N(g_u^k,v_0)$,
we need to show that $x_0ua_uv_0'a_u^{-1}\in \Xi$.
Observe that for all $v\in N$,
\begin{align}\label{eq.BH0}x_0u(a_uva_u^{-1}) &=x_0ua_u(g_uv)a_u^{-1}\\&=x_0ua_u( b^N(g_u,v)\,b^{AM}(g_u,v)\,b^{N^+}(g_u,v))a_u^{-1}\notag\\&=x_0u(a_u b^N(g_u,v)a_u^{-1})\,  b^{AM}(g_u,v) \,(a_ub^{N^+}(g_u,v)a_u^{-1}),\notag
\end{align}
whenever $b(g_u,v)$ is defined.
For any $n\in N_{r_0}$, we can plug $v=v_0'n$
into \eqref{eq.BH0} by \eqref{eq.gk3}, and
 get
\begin{align*}
x_0u(a_uv_0'na_u^{-1})
&=x_0u(a_u b^N(g_u,v_0'n)a_u^{-1})(\ell   a_u b^{N^+}(g_u,v_0'n)a_u^{-1})
\end{align*}
where $\ell:=b^{AM}(g_u,v_0'n)\in \cal O_{\ell_0}$  by \eqref{eq.l'}.

Recall that $b^{N^+}(g_u,v_0'n)\in \cal B_{N^+}$ by \eqref{eq.RRR} and 
  $\op{Ad}_{ a_t}(\cal B_{N^+})\subset {\cal O}_{N^+}$ for all $  t\ge T_0$ by \eqref{eq.T1x}.
  It follows that
  $$
a_u b^{N^+}(g_u,v_0'n)a_u^{-1}\in {\cal O}_{N^+}.
$$
  
Since  
$$x_0u(a_u b^N(g_u,v_0'n)a_u^{-1})=
x_0u(a_uv_0'na_u^{-1}) (  a_u b^{N^+}(g_u,v_0'n)a_u^{-1})^{-1} \ell^{-1} ,$$
and $ Q_\bot = Q \cal O_{N^+}\cal O_{\ell_0}^{-1} $ as defined in \eqref{eq.defQbot},
we have for all $n\in N_{r_0}$,
\begin{equation}\label{eq.perpx}
\mathbbm{1}_{Q}(x_0ua_uv_0'  na_u^{-1})\leq \mathbbm{1}_{Q_\bot}(x_0u(a_u b^N(g_u,v_0'n)a_u^{-1})).
\end{equation}

Note that 
\begin{align*}
&\int_{N_{r_0}}\mathbbm{1}_{Q}(x_0ua_uv_0'a_u^{-1}(a_una_u^{-1}))\,dn\\
&\leq \int_{N_{r_0}}\mathbbm{1}_{Q_\bot}(x_0ua_u b^N(g_u,v_0'n)a_u^{-1})\,dn \quad \text{by \eqref{eq.perpx}} \\
&\leq (2c)^{-1}\int_{b^N(g_u,v_0'N_{r_0})}\mathbbm{1}_{Q_\bot}(x_0u(a_u na_u^{-1}))\,dn\;\;
\text{by \eqref{eq.Jac} and Lemma \ref{JJac}}
\\
&\leq (2c)^{-1}\int_{v_0'N_{r_0}}\mathbbm{1}_{Q_\bot}(x_0u(a_u na_u^{-1}))\,dn\quad \text{by \eqref{eq.er2.new}}\\
&= (2c)^{-1}\int_{N_{ r_0}}\mathbbm{1}_{Q_\bot}(x_0ua_uv_0'a_u^{-1}(a_u na_u^{-1}))\,dn.
\end{align*}

Hence by the change of variable formula, we have
\begin{equation*}\label{eq.cofv}
\int_{a_u N_{r_0}a_u^{-1}}\mathbbm{1}_{Q_\bot}(x_0ua_uv_0'a_u^{-1} n)\,dn\geq 2 c \int_{a_u N_{r_0}a_u^{-1}}\mathbbm{1}_{Q}(x_0ua_uv_0'a_u^{-1}  n)\,dn.
\end{equation*}
In view of definition \eqref{eq.defXi}, this proves that $x_0ua_uv_0'a_u^{-1} \in\Xi $.
\end{proof}

Although the following lemma, which was used in the above proof,
should be a standard fact, we could not find a reference, so we provide a proof.
\begin{lem}\label{JJac}
For any measurable function $f:N\to\bb R$ and a differentiable map $\phi : N\to N$, we have
$$
\int_N (f\circ\phi)(n)\,|\op{Jac}_n\phi|\,dn=\int_N f(n)\,dn.
$$
\end{lem}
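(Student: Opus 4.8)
The plan is to recognize that $\op{Jac}_n\phi$, although defined through the \emph{left-invariant} frame $\cal B_n$ on $T_nN$ rather than through a coordinate chart, is exactly the factor by which $\phi$ distorts Haar measure; once that identification is in hand the lemma is just the ordinary change of variables formula on a manifold. Concretely, since $N$ is a connected, simply connected nilpotent Lie group it is orientable, so it carries a left-invariant volume form $\omega$, unique up to a scalar, normalized so that $\omega_e(v_1,\dots,v_m)=1$ for the chosen basis $\{v_1,\dots,v_m\}$ of $T_eN$; the Borel measure it determines is a Haar measure, and we may take it to be $dn$. I will work throughout with absolute values of forms and Jacobians, so orientation plays no role.

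The first step is the pointwise identity $\phi^*\omega=(\op{Jac}\phi)\,\omega$. To prove it, fix $n\in N$, recall $\cal B_n=\{\op{D}_e\tau_n(v_1),\dots,\op{D}_e\tau_n(v_m)\}$, and expand $\op{D}_n\phi\big(\op{D}_e\tau_n(v_i)\big)=\sum_j a_{ji}\,\op{D}_e\tau_{\phi(n)}(v_j)$; by definition $(a_{ji})=[\op{D}_n\phi]_{\cal B_n}^{\cal B_{\phi(n)}}$ and $\det(a_{ji})=\op{Jac}_n\phi$. Evaluating $\phi^*\omega$ at $n$ on the frame $\cal B_n$ and using multilinearity and antisymmetry of $\omega$ gives $(\phi^*\omega)_n(\cal B_n)=\omega_{\phi(n)}\big(\op{D}_n\phi(\cal B_n)\big)=\det(a_{ji})\cdot\omega_{\phi(n)}(\cal B_{\phi(n)})=\op{Jac}_n\phi$, whereas $\omega_n(\cal B_n)=1$ because both $\omega$ and the frame $\{\cal B_w\}_w$ are left-invariant. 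Since $\cal B_n$ is a basis of $T_nN$, this yields $(\phi^*\omega)_n=(\op{Jac}_n\phi)\,\omega_n$.

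The second step is to feed this into the manifold change of variables theorem: for $\phi$ a diffeomorphism of $N$ one has $\int_N (f\circ\phi)\,|\phi^*\omega|=\int_N f\,|\omega|$, which by the identity above is exactly $\int_N (f\circ\phi)(n)\,|\op{Jac}_n\phi|\,dn=\int_N f(n)\,dn$. I do not expect any real obstacle here; the single point that needs care — and that is the whole content of the lemma — is that the Jacobian is computed in the left-invariant frame, which is precisely the frame trivializing $TN$ compatibly with $dn$ (equivalently, left translations have Jacobian $\equiv 1$, matching left-invariance of Haar measure), so that passing to exponential coordinates would instead force one to track the discrepancy between the coordinate frame and $\{\cal B_w\}_w$. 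In every application of the lemma $\phi$ is a diffeomorphism onto its image — a composition of a left translation with a map of the form $b^N(g,\cdot)$ — so the displayed formula applies verbatim on that image; accordingly I read the statement with $\phi$ a diffeomorphism, which is the generality actually used (for a genuinely non-injective differentiable $\phi$ the equality can fail, e.g. $\phi$ constant, and the correct general statement would involve the degree of $\phi$).
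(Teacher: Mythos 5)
Your proof is correct, but it takes a genuinely different route from the paper's. The paper works in exponential coordinates: it sets $\tilde\phi=\log\circ\phi\circ\exp$, uses the Baker--Campbell--Hausdorff formula for the (at most $2$-step) nilpotent group $N$ to compute $\op{D}_{e^x}\phi$ as a composition involving the unipotent operators $\op{Id}_{\frak n}+\tfrac12\op{ad}_z$, concludes $|\op{Jac}_{e^x}\phi|=|\op{Jac}_x\tilde\phi|$, and then invokes the Euclidean change of variables on $\frak n$ together with the fact that Haar measure is the push-forward of Lebesgue measure under $\exp$. You instead argue intrinsically: the left-invariant volume form $\omega$ normalized on $\cal B_e$ satisfies $\omega_n(\cal B_n)=1$ for all $n$ by left-invariance of both $\omega$ and the frame, whence $\phi^*\omega=(\op{Jac}\phi)\,\omega$ pointwise, and the lemma becomes the change-of-variables theorem for densities on a manifold. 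Your argument is shorter, avoids the BCH computation entirely, and works for an arbitrary Lie group with a left-invariant frame (not just $2$-step nilpotent $N$); the paper's computation buys an explicit reduction to the Lebesgue change of variables and, as a byproduct, the identity $|\op{Jac}_{e^x}\phi|=|\op{Jac}_x\tilde\phi|$ in the coordinates actually used elsewhere. Your closing caveat about injectivity is well taken: as literally stated the lemma fails for non-injective $\phi$ (the paper's own proof has the same implicit hypothesis, since the Lebesgue change-of-variables step also requires $\tilde\phi$ injective), and in every application $\phi$ is indeed a diffeomorphism onto its image with the right-hand side interpreted as the integral over that image.
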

\begin{proof}
Since $N$ is a simply connected nilpotent Lie group, the Haar measure $dn$ on $N$ is the push-forward of the Lebesgue measure $d\op{Leb}$ on $\frak n=\op{Lie} N$ by the exponential map.
Let  $\tilde\phi:=\log\circ\,\phi\circ\exp$.
Note that $\op{Id}+\frac{1}{2}\op{ad}_{x}\in\op{GL}(\frak n)$ is unipotent for all $x\in\frak n$, as $\op{ad}_{x}\in \op{End}(\frak n)$ is a nilpotent element.  
We claim that $|\op{Jac}_{e^x}\phi|=|\op{Jac}_{x}\tilde \phi|$.

Since $N$ is a nilpotent Lie group of at most $2$-step, 
we have for any $n, n'\in N$,$$
\log(nn')=\log n+\log n'+\frac{1}{2}[\log n,\log n'].
$$
Hence, we get via a direct computation:
\begin{align*}
&\frac{d}{dt}\log \phi(e^x)^{-1}\phi(e^{x}e^{ty})\\
&=\frac{d}{dt}\log \phi(e^x)^{-1}\phi(e^{x+{t}y+\frac{1}{2}t[x,y]})\\
&=\frac{d}{dt}\left(\log  \phi(e^x)^{-1}+\log \phi(e^{x+{t}y+\tfrac{1}{2}t[x,y]})+\tfrac{1}{2}[ \log  \phi(e^x)^{-1},\log \phi(e^{x+{t}y+\tfrac{1}{2}t[x,y]})]\right)\\
&=(\op{Id}_{\frak n}+\tfrac{1}{2}\op{ad}_{-\tilde\phi(x)})\left(\frac{d}{dt}\tilde\phi(x+t(y+\tfrac{1}{2}[x,y]))\right)\\
&=(\op{Id}_{\frak n}+\tfrac{1}{2}\op{ad}_{-\tilde\phi(x)})\circ (\op{D}_x\tilde\phi)(y+\tfrac{1}{2}[x,y]).
\end{align*}
Now let $x\in\frak n$ and $y\in T_{e^x}N$.
In view of the identification $\frak n=T_eN\simeq T_{n}N$ for $n=e^x$ and $\phi(e^x)$, we have
\begin{align*}
&\op{D}_{e^x}\phi(y)=\frac{d}{dt}\bigg\rvert_{t=0}\phi(e^x)^{-1}\phi(e^xe^{ty})\\
&=\frac{d}{dt}\bigg\rvert_{t=0}\exp\circ\log \phi(e^x)^{-1}\phi(e^xe^{ty})\\
&=(\op{D}_0\exp)\left(\frac{d}{dt}\bigg\rvert_{t=0}\log \phi(e^x)^{-1}\phi(e^xe^{ty})\right)\\
&=(\op{D}_0\exp)\circ (\op{Id}_{\frak n}+\tfrac{1}{2}\op{ad}_{-\tilde\phi(x)})\circ (\op{D}_x\tilde\phi)(y+\frac{1}{2}[x,y])\\
&= (\op{D}_0\exp)\circ (\op{Id}_{\frak n}+\tfrac{1}{2}\op{ad}_{-\tilde\phi(x)})\circ (\op{D}_x\tilde\phi)\circ (\op{Id}_{\frak n}+\tfrac{1}{2}\op{ad}_{x})
(y)
\end{align*}
where we have used the convention $\frac{d}{dt}|_{t=0}\beta\in T_{\beta(0)}N$ to denote the element of $T_{\beta(0)}N$ represented by a smooth curve $\beta : (-\e,\e)\to N$.
Since $\op{D}_0\exp : T_0\frak n\to T_eN=\frak n$ is the identity map $\op{Id}_{\frak n}$ under the identification $T_0\frak n\simeq\frak n$
and $\op{Id}_{\frak n}+\tfrac{1}{2}\op{ad}_{z}:\frak n\to \frak n$ 
has determinant one for any $z\in \frak n$, being a unipotent matrix,
we deduce that $\text{det}( \op{D}_{e^x}\phi)= \text{det} (\op{D}_x \tilde \phi)$, proving the claim.
Hence for any measurable function $f:N\to\bb R$, we have
\begin{align*}
&\int_N (f\circ\phi)(n)\,|\op{Jac}_n\phi|\,dn=\int_{\frak n}(\tilde f\circ\tilde\phi)(x)\,|\op{Jac}_{e^x}\phi|\,d\op{Leb}(x)\\
&=\int_{\frak n} (\tilde f\circ\tilde\phi)(x)\,|\op{Jac}_{x}\tilde \phi|\,d\op{Leb}(x)=\int_{\frak n} \tilde f(x)\,d\op{Leb}(x)=\int_N f(n)\,dn,
\end{align*}
where we have used the change of variable formula for the Lebesgue measure in the second last equality.
This proves the lemma.
\end{proof}

\subsection*{Definition of $B_u$, $J_u$}
For each $u\in \T_{\Om_1}$, we define
\begin{align*}
B_{u}&:=ua_{u} N_{c\eta} a_{u}^{-1},\text{ and }\\
J_{u}&:=\{ua_ub^N(g_u^k,n)a_u^{-1}:n\in N_{c\eta}, x_0ua_{u}na_{u}^{-1}\in\Om\}.
\end{align*}

\begin{lemma}\label{lem.asy}
For all $u\in\T_{\Om_1}$, we have 
$$
2c\cdot  m(B_u\cap\T_{\Om})\le m(J_u).
$$
\end{lemma}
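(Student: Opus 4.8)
The statement compares the measure of $B_u \cap \T_{\Om}$ (a slice of the "$\Omega$-return" set inside the dilated ball $ua_uN_{c\eta}a_u^{-1}$) with the measure of $J_u$, which is the image of that same return set under the map $n \mapsto b^N(g_u^k, n)$ transported through the $a_u$-conjugation. The natural strategy is the change of variables formula: $J_u$ is, up to the left translation by $ua_u$ and conjugation by $a_u$, the image of $\{n \in N_{c\eta} : x_0ua_una_u^{-1} \in \Om\}$ under the differentiable self-map $\phi := b^N(g_u^k, \cdot)$ of $N$. The plan is therefore: (i) unwind the definitions so that both sides become integrals of indicator functions over $N$; (ii) apply Lemma~\ref{JJac} to rewrite $m(J_u)$ as an integral involving $|\op{Jac}_n \phi|$ against the indicator of the return set; (iii) bound $|\op{Jac}_n \phi|$ from below by $2c$ using the estimate \eqref{eq.Jac} (valid for $g = g_u \in V_0$ and $n \in N_\eta \supset N_{c\eta}$, since $c < 1/2$); and (iv) account for the fact that $\phi(N_{c\eta}) \subset b^N(g_u^k, e)N_{c\eta}$ by \eqref{eq.er}, so the image stays inside the relevant ball and no mass is lost.

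More concretely, I would first note that $a_u$-conjugation and left translation by $ua_u$ are measure-preserving (the former because $\op{Ad}_{a_u}$ scales Haar measure by a fixed factor that cancels in any ratio, but actually here we want an honest identity — conjugation by $a_u$ multiplies $m$ by $e^{-2\rho(t_u\v)}$, which appears symmetrically on both sides since $B_u$ and $J_u$ are both conjugated ball-type sets, so I should be careful to set things up as $m(B_u \cap \T_\Om) = e^{-2\rho(t_u\v)} m(\{n \in N_{c\eta} : x_0 u a_u n a_u^{-1} \in \Om\})$ and similarly for $J_u$, so the factor cancels cleanly). Then $m(J_u) = e^{-2\rho(t_u\v)} m(\phi(\{n \in N_{c\eta} : x_0 u a_u n a_u^{-1} \in \Om\}))$, and since $\phi$ restricted to $N_\eta$ is injective with Jacobian bounded below by $2c$ (this injectivity on $N_\eta$ follows from the Lipschitz bound \eqref{qqq}/\eqref{eq.er} forcing $\phi$ to be a contraction hence injective, combined with $\phi(N_{c\eta})\subset b^N(g_u^k,e)N_{c\eta}$ so the image is a genuine subset), the change of variables formula (Lemma~\ref{JJac} applied to $f = \mathbbm{1}$ of the image, or more directly the injective change of variables) gives
$$m(\phi(S)) = \int_{\phi(S)} dn = \int_S |\op{Jac}_n\phi|\, dn \geq 2c\, m(S)$$
for any measurable $S \subset N_{c\eta}$. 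Taking $S = \{n \in N_{c\eta} : x_0 u a_u n a_u^{-1} \in \Om\}$ and multiplying through by $e^{-2\rho(t_u\v)}$ yields exactly $m(J_u) \geq 2c\, m(B_u \cap \T_\Om)$.

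**Main obstacle.** The delicate point is the bookkeeping in step (ii): one must check that $J_u$ really is the $(ua_u\cdot a_u^{-1})$-transport of $\phi(S)$ and not of something larger or smaller — in particular that the map $n \mapsto b^N(g_u^k, n)$ is well-defined and injective on all of $N_{c\eta}$ (not just generically), and that its image genuinely lands where the definition of $J_u$ places it. This is where \eqref{eq.er}, \eqref{eq.Jac}, and the contraction estimate \eqref{qqq} (which give Lipschitz constant $<1$ on $N_\eta$, hence injectivity) must be invoked carefully. A secondary subtlety is the conjugation-by-$a_u$ Jacobian factor $e^{-2\rho(t_u\v)}$: it must be verified that it appears with the same exponent on both sides — it does, because $B_u = ua_uN_{c\eta}a_u^{-1}$ and $J_u = ua_u\phi(\text{subset of }N_{c\eta})a_u^{-1}$ are both pushforwards under the same conjugation of subsets of $N_{c\eta}$, so the factor is common and cancels in the inequality. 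Everything else is the routine change of variables packaged in Lemma~\ref{JJac}.
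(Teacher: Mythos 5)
Your proposal is correct and follows essentially the same route as the paper: identify $J_u$ as the image of $B_u\cap\T_\Om$ under $\varphi_u\circ b^N(g_u^k,\cdot)\circ\varphi_u^{-1}$ (with $\varphi_u(n)=ua_una_u^{-1}$, whose Jacobian factor cancels on the two sides exactly as you note), invoke the lower bound $2c\le|\op{Jac}_v b^N(g_u^k,\cdot)|$ from \eqref{eq.Jac} on $N_{c\eta}\subset N_\eta$, and conclude by the change of variables of Lemma~\ref{JJac}. One small correction: ``a contraction, hence injective'' is not a valid inference (constant maps are contractions); injectivity of $b^N(g_u^k,\cdot)$ on its domain instead follows because $b^N(g,n)e^-=gne^-$, so the map realizes the (injective) action of $g_u^k$ on $Ne^-\subset\F$ composed with the diffeomorphism $n\mapsto ne^-$ --- a point the paper itself leaves implicit in Lemma~\ref{JJac}.
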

\begin{proof}
Defining $\varphi_u : N\to N$ by $\varphi_u(n)=u (a_{u}na_{u}^{-1})$, we have
\begin{align*}
&J_{u}=(\varphi_u\circ b^N(g_{u}^k,\cdot)\circ\varphi_u^{-1})(B_{u}\cap\T_\Omega).
\end{align*}
For all $v\in {N_{c\eta} \subset} N_\eta$, we have $2c \le |\op{Jac}_v b^N(g_{u}^k,\cdot)|$ by \eqref{eq.Jac}, 
and hence 
$$
2c \le |\op{Jac}_v(\varphi_u\circ b^N(g_{u}^k,\cdot)\circ\varphi_u^{-1})|.
$$
The lemma follows from Lemma \ref{JJac}.
\end{proof}

\begin{lemma}\label{lem.chi}
For any $u\in\T_{\Om_1}\cap (D-\partial_{R_2} D)$, we have
$$J_u\subset \T_{\tOm \cap \Xi}\cap D.$$

\end{lemma}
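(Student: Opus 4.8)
The plan is to show that $J_u \subset \T_{\tOm \cap \Xi}$ and $J_u \subset D$ separately, using the two properties of $u$ in the hypothesis. The inclusion $J_u \subset \T_\Xi$ is essentially a consequence of Lemma \ref{lem.P1x}: by definition, an element of $J_u$ has the form $u a_u b^N(g_u^k, n) a_u^{-1}$ with $n \in N_{c\eta}$, and by \eqref{eq.er} (applied with $g = g_u \in V_0$) we have $b^N(g_u^k, n) \in b^N(g_u^k, N_{c\eta}) \subset b^N(g_u^k, e) N_{c\eta} \subset b^N(g_u^k, e) N_\eta \subset b^N(g_u^k, N_\eta)$, so $J_u \subset u a_u b^N(g_u^k, N_\eta) a_u^{-1} \subset \T_\Xi$ by Lemma \ref{lem.P1x}. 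The inclusion $J_u \subset \T_{\tOm}$ should follow from the constraint $x_0 u a_u n a_u^{-1} \in \Om$ built into the definition of $J_u$: starting from such a point in $\Om$, one applies $g_u \in \op{Stab}_G(x_0 u a_u)$ together with the Bruhat decomposition $g_u v = b^N(g_u, v) b^{AM}(g_u, v) b^{N^+}(g_u, v)$ as in \eqref{eq.BH0}, and tracks that the $AM$-component lands in $\cal B_L$ and the $N^+$-component, after conjugation by $a_u$, lands in $\cal B_{N^+}$ (via \eqref{eq.RRR} and \eqref{eq.T1x}), so that the resulting point lies in $\Om \cal B_L \cal B_{N^+} = \tOm$. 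Here the subtle point is the bookkeeping of which $g$-iterate ($g_u$ or $g_u^k$) moves $n$ to which element; the element $b^N(g_u^k, n)$ in $J_u$ is already the $N$-part of $g_u^k$ acting, so one should think of $x_0 u a_u$ first moved by $g_u^k$ into $\tOm$, and the membership of $x_0 u a_u n a_u^{-1}$ in $\Om$ gives the needed foothold.

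**Next I would** handle $J_u \subset D$. Since $D = B_0(R_{x_0})$ is a metric ball for $d_0$, and $u \in D - \partial_{R_2} D = B_0(R_{x_0} - R_2)$, it suffices to bound the $d_0$-distance from $u$ to any point of $J_u$ by $R_2$. A point of $J_u$ is $u a_u b^N(g_u^k, n) a_u^{-1}$ with $n \in N_{c\eta}$. By \eqref{eq.er}, $b^N(g_u^k, n) \in b^N(g_u^k, e) N_\eta$, and by \eqref{eq.RRR}, $g_u^k N_\eta \subset N_R \cal B_L \cal B_{N^+}$; extracting the $N$-part, $b^N(g_u^k, N_\eta) \subset N_R$ (the $\cal B_L, \cal B_{N^+}$ factors do not affect the $N$-coordinate in the $NLN^+$ decomposition), so $b^N(g_u^k, n) \in N_R$. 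Then $a_u b^N(g_u^k,n) a_u^{-1} \in a_u N_R a_u^{-1} = a_{t_u} N_R a_{t_u}^{-1}$ with $0 < t_u \le T_1$, and since $N_R \subset B_0(R_1)$, \eqref{rdouble} gives $a_{t_u} B_0(R_1) a_{t_u}^{-1} \subset B_0(R_2)$. Hence $a_u b^N(g_u^k, n) a_u^{-1} \in B_0(R_2)$, so by left-invariance and the quasi-triangle (actual triangle, since $d_0$ is a genuine metric) inequality, $d_0(u, u a_u b^N(g_u^k,n) a_u^{-1}) \le R_2$, placing the point in $B_0(R_{x_0} - R_2 + R_2) = D$.

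**The main obstacle** I anticipate is the first inclusion $J_u \subset \T_{\tOm}$: one must carefully verify that the condition $x_0 u a_u n a_u^{-1} \in \Om$ (for $n \in N_{c\eta}$) translates, after the Bruhat bookkeeping, into $x_0 u a_u b^N(g_u^k, n) a_u^{-1} \in \tOm = \Om \cal B_L \cal B_{N^+}$ — this requires identifying $b^N(g_u^k, n)$ as exactly the $N$-part produced when $g_u^k$ acts, and invoking $g_u \in \op{Stab}_G(x_0 u a_u)$ so that $g_u^k$ also stabilizes $x_0 u a_u$. Concretely, $x_0 u a_u = x_0 u a_u g_u^k = x_0 u a_u \cdot b^N(g_u^k, n') \cdot (\text{something in } LN^+)$ after choosing the right reference point, and matching conjugated $N^+$-terms into $\cal B_{N^+}$ via \eqref{eq.T1x} (using $t_u \ge T_0$) and the $AM$-terms into $\cal B_L$ via \eqref{eq.RRR}. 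Once the algebra is lined up correctly, the two displayed inclusions combine to give $J_u \subset (\T_{\tOm} \cap \T_\Xi) \cap D = \T_{\tOm \cap \Xi} \cap D$, which is the claim. Everything else is routine tracking of constants that were rigged precisely for this purpose in the long list of definitions of $V_0$, $R$, $\cal B_L$, $\cal B_{N^+}$, $R_1$, and $R_2$.
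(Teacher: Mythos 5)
Your proposal is correct and follows essentially the same route as the paper: membership in $\T_{\Xi}$ comes from Lemma \ref{lem.P1x} since $N_{c\eta}\subset N_\eta$, membership in $\T_{\tOm}$ comes from applying the Bruhat decomposition \eqref{eq.BH} to $g_u^k\in\op{Stab}_G(x_0ua_u)$ and absorbing the $AM$- and conjugated $N^+$-components into $\cal B_L$ and $\cal O_{N^+}\subset\cal B_{N^+}$ via \eqref{eq.RRR} and \eqref{eq.T1x}, and membership in $D$ comes from $b^N(g_u^k,n)\in N_R\subset B_0(R_1)$, \eqref{rdouble}, and the triangle inequality for $d_0$. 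One tiny slip: your chain uses $b^N(g_u^k,e)N_\eta\subset b^N(g_u^k,N_\eta)$, which is the reverse of what \eqref{eq.er} gives; but this detour is unnecessary, since $N_{c\eta}\subset N_\eta$ already yields $b^N(g_u^k,N_{c\eta})\subset b^N(g_u^k,N_\eta)$ directly.
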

\begin{proof}
Let $u\in\T_{\Om_1}$ and $v\in J_u$ be arbitrary.
Then $v=u(a_u b^N(g_u^k,n)a_u^{-1})$ for some $n\in N_{c\eta}$. Since  $x_0u\in\Om_1$ we have for all $n\in N_{c\eta}$,
\begin{align}\label{eq.BH}
&x_0u(a_una_u^{-1})=x_0ua_u(g_u^kn)a_u^{-1}\\
&=x_0ua_u( b^N(g_u^k,n)\,b^{AM}(g_u^k,n)\,b^{N^+}(g_u^k,n))a_u^{-1}\notag\\
&=x_0u(a_u b^N(g_u^k,n)a_u^{-1})\,  b^{AM}(g_u^k,n) \,(a_ub^{N^+}(g_u^k,n)a_u^{-1}),\notag
\end{align}
with $b^{AM}(g_u^k,n)\in  {\cal B_L}$ and $b^{N^+}(g_u^k,n)\in \cal B_{N^+}$, by \eqref{eq.RRR}.
Since $t_u\ge T_0$, we have $a_ub^{N^+}(g_u^k,n)a_u^{-1}\subset {\cal O}_{N^+}$ by \eqref{eq.T1x}.
Hence, 
\begin{align*}
	x_0v&=x_0u(a_u b^N(g_u^k,n)a_u^{-1})
\\&= x_0u (a_u n a_u^{-1}) (a_ub^{N^+}(g_u^k,n)^{-1} a_u^{-1}) b^{AM}(g_u^k,n)^{-1}
\in \Om   {\cal O}_{N^+} {\cal B_L}.
\end{align*}
Since $ \cal O_{N^+} \subset \cal B_{N^+} $ we deduce
\[ x_0 v \in \Om {\cal B}_{N^+} {\cal B_L} \subset \tOm. \]
By Lemma \ref{lem.P1x}, since $ v=u(a_u b^N(g_u^k,n)a_u^{-1}) $ with $ u \in \Om_1 $ and $ n \in N_{c\eta} \subset N_\eta $ we have
$ x_0 v \in \Xi $ implying $ v \in \T_{\tOm \cap \Xi} $.

Further assuming that $u\in D-\partial_{R_2} D$, since $b^N(g_u^k,n)\in N_R\subset B_0(R_1)$, by \eqref{eq.RRR},
it follows from \eqref{rdouble} that $$
a_u b^N(g_u^k,n)a_u^{-1}\in B_0({ R_2 }).
$$
Since $d_0$ is a distance, satisfying the triangle inequality,  we deduce that $v\in D$, as claimed.
\end{proof}

\subsection*{Properties of coverings}

For all $u\in\T_{\Om_1}$, we have
\begin{align}\label{eq.crrx}
&b^N(g_u^k,e)N_{2c\eta }\subset b^N(g_u^k,N_\eta)\subset b^N(g_u^k,e)N_{\eta} \text{ and }\\
&b^N(g_u^k,N_{c\eta})\subset b^N(g_u^k,e) N_{c\eta}.\notag
\end{align}
Setting \be\label{wu} w_u:=ua_ub^N(g_u^k,e)a_u^{-1},\ee
we have
\begin{align}\label{eq.wix}
&J_u
\subset w_u a_uN_{c\eta}a_u^{-1}\text{ and }\\
&w_u a_uN_{2c\eta}a_u^{-1}\subset  ua_ub^N( g_u^k,N_{\eta})a_u^{-1}\subset w_u a_uN_\eta a_u^{-1}.\notag
\end{align}
Since $b^N(g_u^k,e)\in N_R$ by \eqref{eq.RRR}, we have $w_u\in  u a_uN_Ra_u^{-1}$.
Hence
\begin{equation}\label{eq.uix}
J_u\subset w_u a_uN_{2c\eta}a_u^{-1}\subset ua_ub^N( g_u^k,N_\eta)a_u^{-1}\subset   u a_uN_RN_\eta a_u^{-1}.
\end{equation}

\begin{lemma}\label{lem.123x}\label{lem.bddx} 
If $u_i,u_j\in \T_{\Om_2}$ satisfy that $J_{u_i}\cap J_{u_j}\neq\emptyset$, then
\begin{enumerate}
\item
$a_{u_i} ^{-1}a_{u_j}N_RN_\eta a_{u_j}^{-1}a_{u_i}N_{ c\eta}\not\subset N_{2c\eta}$,
\item
$u_i^{-1}u_j\in a_{u_i}N_RN_\eta a_{u_i}^{-1}a_{u_j} N_\eta N_R a_{u_j}^{-1}$, 
\item
$B_{u_j}\subset u_i a_{u_i}N_RN_\eta (a_{u_i}^{-1}a_{u_j} N_{\eta} N_{R} N_{ c\eta}a_{u_j}^{-1}a_{u_i})a_{u_i}^{-1}$, and
\item $a_{u_i}^{-1} a_{u_j}\subset \exp( [-\beta, \beta]\v )$.
\end{enumerate}
\end{lemma}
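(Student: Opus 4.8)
The plan is to exploit the inclusions $J_u \subset w_u a_u N_{2c\eta} a_u^{-1}$ and $J_u \subset u a_u N_R N_\eta a_u^{-1}$ from \eqref{eq.wix} and \eqref{eq.uix}, together with the fact that $w_u \in u a_u N_R a_u^{-1}$, to translate a nonempty intersection $J_{u_i} \cap J_{u_j} \ne \emptyset$ into algebraic containments in $N$. The four assertions are increasingly refined consequences of the same initial observation, so I would prove them in the listed order.

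First, for (1), pick a common point $z \in J_{u_i} \cap J_{u_j}$. Writing $z \in w_{u_i} a_{u_i} N_{c\eta} a_{u_i}^{-1}$ (via the first line of \eqref{eq.wix}, since $N_{c\eta}\subset N_{2c\eta}$... actually $J_{u_i}\subset w_{u_i}a_{u_i}N_{c\eta}a_{u_i}^{-1}$ directly) and $z \in w_{u_j} a_{u_j} N_{2c\eta} a_{u_j}^{-1} \cap w_{u_j} a_{u_j} N_\eta a_{u_j}^{-1} \cap u_j a_{u_j} N_R N_\eta a_{u_j}^{-1}$, one solves for $w_{u_i}$ in terms of $w_{u_j}$. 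The point is that $w_{u_i} \in z a_{u_i} N_{c\eta} a_{u_i}^{-1}$, while on the other hand $z \in u_j a_{u_j} b^N(g_{u_j}^k, N_\eta) a_{u_j}^{-1} \subset u_j a_{u_j} N_R N_\eta a_{u_j}^{-1}$; combining with $w_{u_i} \in u_i a_{u_i} N_R a_{u_i}^{-1}$... The cleanest route is: if $a_{u_i}^{-1}a_{u_j} N_R N_\eta a_{u_j}^{-1}a_{u_i} N_{c\eta} \subset N_{2c\eta}$, then one can show $J_{u_i}$ and $J_{u_j}$ would be forced into a common "fiber" structure that contradicts the geometry (or more directly, that $z\in J_{u_j}\subset w_{u_j}a_{u_j}N_{c\eta}a_{u_j}^{-1}$ together with $z\in J_{u_i}$ and the assumed inclusion pushes $w_{u_j}a_{u_j}N_{2c\eta}a_{u_j}^{-1}$ out of $u_ia_{u_i}b^N(g_{u_i}^k,N_\eta)a_{u_i}^{-1}$, contradicting $J_{u_i}\subset$ that set via \eqref{eq.uix}). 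This is the step I expect to be the main obstacle, since it is where the Besicovitch-friendly parameters $c\eta, 2c\eta, \eta, R$ were tuned; I would argue by contradiction, assuming the inclusion in (1) fails to hold... no — (1) asserts the non-inclusion, so I assume the inclusion \emph{does} hold and derive that $J_{u_i} \cap J_{u_j} = \emptyset$, contrary to hypothesis.

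For (2), I would take the common point $z$ again: $z \in u_i a_{u_i} N_R N_\eta a_{u_i}^{-1}$ and $z \in u_j a_{u_j} N_R N_\eta a_{u_j}^{-1}$ by \eqref{eq.uix}, so $u_i^{-1} u_j \in (a_{u_i} N_R N_\eta a_{u_i}^{-1})(a_{u_j} N_\eta N_R a_{u_j}^{-1})$ after noting that $N_R, N_\eta$ are symmetric (being quasi-balls centered at $e$) — this gives the claim directly. For (3), I start from $B_{u_j} = u_j a_{u_j} N_{c\eta} a_{u_j}^{-1}$ and substitute the expression for $u_j$ from (2): $B_{u_j} \subset u_i (a_{u_i} N_R N_\eta a_{u_i}^{-1})(a_{u_j} N_\eta N_R a_{u_j}^{-1}) a_{u_j} N_{c\eta} a_{u_j}^{-1} = u_i a_{u_i} N_R N_\eta (a_{u_i}^{-1} a_{u_j} N_\eta N_R N_{c\eta} a_{u_j}^{-1} a_{u_i}) a_{u_i}^{-1}$, which is exactly the stated containment after conjugating through by $a_{u_i}^{-1}$ inside. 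Finally for (4), writing $a_{u_i}^{-1} a_{u_j} = \exp((t_{u_j} - t_{u_i})\v)$ with $t_{u_i}, t_{u_j} \in [T_0, T_1]$ shows $a_{u_i}^{-1} a_{u_j} \in \exp([T_0 - T_1, T_1 - T_0]\v)$; then I invoke (1): if $|t_{u_j} - t_{u_i}| > \beta$, say $t_{u_i} - t_{u_j} \ge \beta$, then $a_{u_i}^{-1} a_{u_j} N_R N_\eta a_{u_j}^{-1} a_{u_i} N_{c\eta} = a_{-(t_{u_i}-t_{u_j})} N_R N_\eta a_{t_{u_i}-t_{u_j}} N_{c\eta} \subset N_{2c\eta}$ by the defining property \eqref{betad} of $\beta$, contradicting (1); the symmetric case $t_{u_j} - t_{u_i} \ge \beta$ is handled the same way after swapping roles (using that $J_{u_i}\cap J_{u_j}$ is symmetric in $i,j$). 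Hence $|t_{u_i} - t_{u_j}| \le \beta$, which is assertion (4).
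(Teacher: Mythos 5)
Your treatments of (2), (3) and (4) are correct and coincide with the paper's: (2) is the two-sided containment of a common point $z$ in $u_\bullet a_{u_\bullet}N_RN_\eta a_{u_\bullet}^{-1}$ plus symmetry of the quasi-balls, (3) is substitution of (2) into $B_{u_j}=u_ja_{u_j}N_{c\eta}a_{u_j}^{-1}$, and (4) follows from (1) via \eqref{betad} exactly as you describe, including the swap of $i$ and $j$ using the symmetry of the hypothesis.

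However, there is a genuine gap in (1), which you yourself flag as the main obstacle. Statement (1) is \emph{not} a purely geometric consequence of $J_{u_i}\cap J_{u_j}\neq\emptyset$: the assumed inclusion $a_{u_i}^{-1}a_{u_j}N_RN_\eta a_{u_j}^{-1}a_{u_i}N_{c\eta}\subset N_{2c\eta}$ involves only the times $t_{u_i},t_{u_j}$ and places no constraint on the relative position of $u_i$ and $u_j$, so nothing prevents two such sets $J_{u_i},J_{u_j}$ from meeting; in particular your parenthetical ``more direct'' route does not work --- the fact that $J_{u_i}$ and $J_{u_j}$ intersect never forces $w_{u_j}a_{u_j}N_{2c\eta}a_{u_j}^{-1}$ to sit inside (or outside) $u_ia_{u_i}b^N(g_{u_i}^k,N_\eta)a_{u_i}^{-1}$, and ``not contained in'' that set would anyway not contradict $J_{u_i}$ being contained in it. The missing ingredient is the dynamical input from Lemma \ref{lem.P1x} combined with $u_j\in\T_{\Om_2}=\T_{\Om_1}-\T_{\Xi}$: since $u_ia_{u_i}b^N(g_{u_i}^k,N_\eta)a_{u_i}^{-1}\subset \T_{\Xi}$ and $u_j\notin\T_{\Xi}$, we get $u_j\notin u_ia_{u_i}b^N(g_{u_i}^k,N_\eta)a_{u_i}^{-1}\supset w_{u_i}a_{u_i}N_{2c\eta}a_{u_i}^{-1}$ (the last containment by \eqref{eq.wix}), i.e.\ $u_j^{-1}w_{u_i}\notin a_{u_i}N_{2c\eta}a_{u_i}^{-1}$. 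On the other hand, a common point $v\in J_{u_i}\cap J_{u_j}$ gives $u_j^{-1}w_{u_i}=(u_j^{-1}v)(v^{-1}w_{u_i})\in a_{u_j}N_RN_\eta a_{u_j}^{-1}\cdot a_{u_i}N_{c\eta}a_{u_i}^{-1}$ by \eqref{eq.uix} and \eqref{eq.wix}. Conjugating by $a_{u_i}$, the element $a_{u_i}^{-1}u_j^{-1}w_{u_i}a_{u_i}$ lies in $a_{u_i}^{-1}a_{u_j}N_RN_\eta a_{u_j}^{-1}a_{u_i}N_{c\eta}$ but not in $N_{2c\eta}$, which is (1). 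Without identifying this use of Lemma \ref{lem.P1x} the argument for (1) --- and hence for (4), which depends on it --- is incomplete.
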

\begin{proof}
To prove (1), let $v\in J_{u_i}\cap J_{u_j}$.
By \eqref{eq.uix}, we have $u_j^{-1}v\in a_{u_j}N_RN_\eta a_{u_j}^{-1}$ and by \eqref{eq.wix}, we have $v^{-1}w_{u_i}\in a_{u_i}N_{ c\eta}a_{u_i}^{-1}$, using the fact that $N_{c\eta}$ is symmetric.
Hence,
\begin{equation}\label{eq.uwx}
u_j^{-1}w_{u_i}=(u_j^{-1}v)(v^{-1}w_{u_i})\in a_{u_j}N_RN_\eta a_{u_j}^{-1}a_{u_i}N_{ c\eta}a_{u_i}^{-1}.
\end{equation}
Since $u_i\in\T_{\Om_1}$ and $u_j\not\in\T_{\Xi}$, we have $u_j\not\in u_ia_{u_i}b^N( g_{u_i}^k,N_\eta)a_{u_i}^{-1}$ by Lemma \ref{lem.P1x}.
It follows from \eqref{eq.wix} that $u_j\not\in  w_{u_i}a_{u_i}N_{2c\eta} a_{u_i}^{-1}$, or equivalently,
$$
u_j^{-1}w_{u_i}\not\in a_{u_i}N_{2c\eta} a_{u_i}^{-1}.
$$
Note that by \eqref{eq.uwx},
$$
a_{u_i}^{-1}u_j^{-1}w_{u_i}a_{u_i}\in a_{u_i}^{-1}a_{u_j}N_RN_\eta a_{u_j}^{-1}a_{u_i}N_{ c\eta}- N_{2c\eta},
$$
proving (1).
We now prove (2).
Since $J_{u_i}\cap J_{u_j}\neq\emptyset$, by \eqref{eq.wix} and \eqref{eq.uix}, 
$$
u_i a_{u_i}N_RN_\eta a_{u_i}^{-1}\cap u_j a_{u_j}N_RN_\eta a_{u_j}^{-1}\neq\emptyset.
$$
Since $N_\eta$ and $N_R$ are symmetric, we get
$$
u_i^{-1}u_j\in a_{u_i}N_RN_\eta a_{u_i}^{-1}a_{u_j} N_\eta N_R a_{u_j}^{-1}.
$$

To check (3), observe that
\begin{align}\label{eq.UB}
B_{u_j}&=u_j a_{u_j}N_{ c\eta}a_{u_j}^{-1}=u_i(u_i^{-1}u_j)a_{u_j}N_{ c\eta}a_{u_j}^{-1}\\
&\subset u_i(a_{u_i}N_RN_\eta a_{u_i}^{-1}a_{u_j} N_\eta N_R a_{u_j}^{-1})a_{u_j}N_{c\eta}a_{u_j}^{-1}\notag\\
&= u_ia_{u_i}N_RN_\eta (a_{u_i}^{-1}a_{u_j} N_\eta N_R N_{c\eta}a_{u_j}^{-1}a_{u_i})a_{u_i}^{-1},\notag
\end{align}
where the inclusion $\subset$ follows from Claim (2).
Claim (4) follows from (1) by the  choice of $\beta$ as in \eqref{betad}.
\end{proof}

\begin{lemma} \label{lem.cov2}
For a bounded subset $S\subset \T_{\Om_2}$, consider the covering
$\{B_u: u\in S\}$. There exists a countable subset $F\subset S$ such that
 $\{B_{u_i}:u_i\in F\}$ covers $S$ and
\begin{equation} 
\sum_{i}\mathbbm{1}_{J_{u_i}}\leq \kappa_*.
\end{equation}
where $\kappa_*$ is given in \eqref{ka}. \end{lemma}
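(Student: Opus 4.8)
The plan is to apply the Besicovitch covering property (Proposition \ref{REH}) to the family $\{B_u : u \in S\}$, which by definition are quasi-balls $B_u = ua_uN_{c\eta}a_u^{-1} = B_\v(u, e^{t_u} c\eta)$ by the conjugation-dilation identity \eqref{cong}. This directly yields a countable $F \subset S$ such that $\{B_{u_i}\}_{u_i\in F}$ covers $S$ with $\sum_i \mathbbm{1}_{B_{u_i}} \le \kappa_\v$. The work is then to pass from the overlap bound on the balls $B_{u_i}$ to an overlap bound on the sets $J_{u_i}$, which are not the $B_{u_i}$ themselves but nearby distorted sets.

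First I would use the geometric containments established in Lemmas \ref{lem.bddx} and \ref{lem.123x}: if $J_{u_i}\cap J_{u_j}\ne\emptyset$, then $a_{u_i}^{-1}a_{u_j}\in\exp([-\beta,\beta]\v)$ by part (4), so $|t_{u_i}-t_{u_j}|\le\beta$, and moreover $B_{u_j}\subset u_ia_{u_i}N_RN_\eta(a_{u_i}^{-1}a_{u_j}N_\eta N_R N_{c\eta}a_{u_j}^{-1}a_{u_i})a_{u_i}^{-1}$ by part (3). Since $|t_{u_i}-t_{u_j}|\le\beta$, conjugating the defining inclusion \eqref{Rprime} of $R'$ by $a_{u_i}$ shows $B_{u_j}\subset u_ia_{u_i}N_{R'}a_{u_i}^{-1} = B_\v(u_i, e^{t_{u_i}}R')$. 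In particular, for fixed $u_j\in F$, every $u_i\in F$ with $J_{u_i}\cap J_{u_j}\ne\emptyset$ has $B_{u_i}=B_\v(u_i,e^{t_{u_i}}c\eta)$ contained in $B_\v(u_j,e^{t_{u_j}}R')$ (again using part (3) symmetrically, with $i,j$ swapped) and $|t_{u_i}-t_{u_j}|\le\beta$.

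Now I would invoke Lemma \ref{ka1} with $\eta_1 = c\eta$ and $\eta_2 = R'$: it bounds the number of $u_i\in F$ whose ball $B_\v(u_i,e^{t_{u_i}}c\eta)$ is contained in $B_\v(u_j,e^{t_{u_j}}R')$ with $|t_{u_i}-t_{u_j}|\le\beta$ by $\kappa_*(\v,\beta,c\eta,R') = \kappa_*$ as defined in \eqref{ka}. Since $J_{u_i}\subset B_{u_i}$ (from \eqref{eq.wix}, \eqref{eq.uix}, or directly since $J_u \subset B_u$ up to the translation $w_u$; more precisely the argument goes through the containments in Lemma \ref{lem.bddx}), any point lying in $k$ distinct sets $J_{u_i}$ forces those $k$ indices to be mutually "close" to any one of them, hence $k\le\kappa_*$. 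This gives $\sum_i\mathbbm{1}_{J_{u_i}}\le\kappa_*$ pointwise, completing the proof.

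The main obstacle I anticipate is bookkeeping the various neighborhoods correctly: ensuring that the inclusion from Lemma \ref{lem.123x}(3) genuinely lands inside the $R'$-quasi-ball centered at $u_i$ after the $a_{u_i}$-conjugation — this is exactly what \eqref{Rprime} was set up to guarantee, but one must check that the relevant products of $N_R, N_\eta, N_{c\eta}$ and the conjugated factors match the left-hand side of \eqref{Rprime}, and that the symmetry of these quasi-balls (used implicitly throughout) is legitimate. A secondary subtlety is that one wants the containment $B_{u_i}\subset B_\v(u_j, e^{t_{u_j}}R')$ rather than merely $B_{u_j}\subset B_\v(u_i, e^{t_{u_i}}R')$; since the hypothesis $J_{u_i}\cap J_{u_j}\ne\emptyset$ is symmetric in $i,j$ and all constants are uniform, swapping roles in Lemma \ref{lem.123x}(3) yields both, so this is not a real difficulty once noted.
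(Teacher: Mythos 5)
Your proposal is correct and follows essentially the same route as the paper: take the countable subcover from Lemma \ref{ka1} with parameters $\beta$, $\eta_1=c\eta$, $\eta_2=R'$, use Lemma \ref{lem.123x}(3),(4) to convert $J_{u_i}\cap J_{u_j}\neq\emptyset$ into $B_{u_j}\subset u_ia_{u_i}N_{R'}a_{u_i}^{-1}$ together with $|t_{u_i}-t_{u_j}|\le\beta$, and then apply the counting bound of Lemma \ref{ka1} to a common point of $q$ of the $J$'s. The only cosmetic slip is the parenthetical claim $J_{u_i}\subset B_{u_i}$ (in fact $J_u\subset w_ua_uN_{c\eta}a_u^{-1}$, a translate of $B_u$), but as you note the argument does not need this and runs entirely through Lemma \ref{lem.123x}(3).
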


\begin{proof} Let $\{B_{u_i}:u_i\in F\}$ be a countable subcover of $S$ given by Lemma \ref{ka1} with respect to the parameters $\beta, \eta_1=c\eta$ and $\eta_2=R'$. Since $S\subset \T_{\Om_2}$,
note that whenever $J_{u_i}\cap J_{u_j}\neq\emptyset$, we have
$|t_{u_i}- t_{u_j}|\leq\beta$ by Lemma \ref{lem.bddx}(4). Moreover
by  Lemma \ref{lem.123x}(3), and the definition of $R'>0$ as given in \eqref{Rprime}, we also have
$$B_{u_j}=u_ja_{u_j} N_{c\eta} a_{u_j}^{-1}\subset  C_{u_i}:=u_i a_{u_i}N_{R'}a_{u_i}^{-1}.$$

Therefore, if $J_{u_1}\cap \cdots \cap J_{u_{{q}}}\ne \emptyset$ for some ${q} \ge 2$, then 
$$\bigcup_{j=1}^{{q}} B_{u_j}\subset C_{u_i}$$ and $|t_{u_i}- t_{u_j}|\leq\beta$ for all $1\le i,j\le {q} $.
Hence by Lemma \ref{ka1}, we get ${q} \le \kappa_*$. Hence  the claim follows.
 \end{proof}

\begin{lemma}[Lower bound]\label{lem.cov}
We have
$$
m\left(\bigcup_{u\in\T_{\Om_2 }}J_u\cap  D\right)
\geq\frac{c}{\kappa_*} \cdot m(\T_{\Om_2}\cap D)
$$
\end{lemma}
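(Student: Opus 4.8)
The plan is to cover the set $\T_{\Om_2}\cap D$ efficiently by balls $B_u$, $u\in\T_{\Om_2}\cap D$, apply the Besicovitch-type selection of Lemma \ref{lem.cov2} to extract a countable subcover $\{B_{u_i}: u_i\in F\}$ with the key bounded-overlap property $\sum_i\mathbbm 1_{J_{u_i}}\le\kappa_*$, and then relate the measure of $\T_{\Om_2}\cap D$ to the measures of the $J_{u_i}$ via Lemma \ref{lem.asy}. The point is that each $J_u$ lives inside the ``transported'' ball $w_u a_uN_{c\eta}a_u^{-1}$ by \eqref{eq.wix}, and by Lemma \ref{lem.asy} its measure dominates $2c\cdot m(B_u\cap\T_\Om)$; meanwhile the $B_{u_i}$ cover $\T_{\Om_2}\cap D$, so summing these lower bounds over $i$ and using that the $J_{u_i}$ overlap at most $\kappa_*$ times will produce the factor $c/\kappa_*$. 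The only subtlety is to make sure the $J_{u_i}$ all land inside $D$, which is exactly what Lemma \ref{lem.chi} gives, provided the $u_i$ are taken from $D-\partial_{R_2}D$ rather than all of $D$; one handles the boundary layer $\partial_{R_2}D$ separately using the second inequality in Lemma \ref{lem.x0}.

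Concretely, I would proceed as follows. First, set $S:=\T_{\Om_2}\cap(D-\partial_{R_2}D)$, a bounded subset of $\T_{\Om_2}$, and apply Lemma \ref{lem.cov2} to the covering $\{B_u: u\in S\}$ to obtain a countable $F\subset S$ with $\{B_{u_i}:u_i\in F\}$ covering $S$ and $\sum_i\mathbbm 1_{J_{u_i}}\le\kappa_*$. Since each $u_i\in\T_{\Om_1}\cap(D-\partial_{R_2}D)$, Lemma \ref{lem.chi} gives $J_{u_i}\subset\T_{\tOm\cap\Xi}\cap D\subset D$, so in particular $\bigcup_{u\in\T_{\Om_2}}J_u\cap D\supset\bigcup_i J_{u_i}$. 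Now estimate
\begin{align*}
m\Big(\bigcup_{u\in\T_{\Om_2}}J_u\cap D\Big)
&\ge m\Big(\bigcup_i J_{u_i}\Big)
\ge\frac1{\kappa_*}\sum_i m(J_{u_i})\\
&\ge\frac{2c}{\kappa_*}\sum_i m(B_{u_i}\cap\T_\Om)
\ge\frac{2c}{\kappa_*}\sum_i m(B_{u_i}\cap\T_{\Om_2}),
\end{align*}
where the second inequality is the bounded-overlap bound, the third is Lemma \ref{lem.asy}, and the last uses $\Om_2\subset\Om$ so $\T_{\Om_2}\subset\T_\Om$. Since $\{B_{u_i}\}$ covers $S=\T_{\Om_2}\cap(D-\partial_{R_2}D)$, the sum $\sum_i m(B_{u_i}\cap\T_{\Om_2})\ge m(\T_{\Om_2}\cap(D-\partial_{R_2}D))$, and by Lemma \ref{lem.x0} (second inequality) this is at least $\tfrac12 m(\T_{\Om_2}\cap D)$. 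Combining, $m(\bigcup_u J_u\cap D)\ge\frac{c}{\kappa_*}m(\T_{\Om_2}\cap D)$, as claimed.

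The step I expect to require the most care is verifying that the countable subfamily selected by Lemma \ref{lem.cov2} indeed has the overlap bound controlling $\sum_i\mathbbm 1_{J_{u_i}}$ (not $\sum_i\mathbbm 1_{B_{u_i}}$): this is where the geometric lemmas \ref{lem.123x} and \ref{lem.bddx} are essential, since a priori the $J_{u_i}$ are translates/distortions of small quasi-balls sitting inside the $B_{u_i}$, and one needs that $J_{u_i}\cap J_{u_j}\neq\emptyset$ forces both $|t_{u_i}-t_{u_j}|\le\beta$ and $B_{u_j}\subset u_ia_{u_i}N_{R'}a_{u_i}^{-1}$ so that Lemma \ref{ka1} applies with the parameters $\eta_1=c\eta$, $\eta_2=R'$. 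The other mild point is bookkeeping with the boundary collar $\partial_{R_2}D$: one must choose $S$ inside $D-\partial_{R_2}D$ precisely so that Lemma \ref{lem.chi}'s hypothesis $u\in\T_{\Om_1}\cap(D-\partial_{R_2}D)$ is met and the resulting $J_{u_i}$ stay in $D$, and then absorb the loss from the collar using the second estimate in Lemma \ref{lem.x0}. All the remaining inequalities are the routine chain displayed above.
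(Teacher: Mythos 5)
Your proposal is correct and follows essentially the same route as the paper's proof: the same covering of $\T_{\Om_2}\cap(D-\partial_{R_2}D)$ by balls $B_u$, the same use of Lemma \ref{lem.cov2} for the overlap bound $\sum_i\mathbbm 1_{J_{u_i}}\le\kappa_*$, Lemma \ref{lem.chi} to keep the $J_{u_i}$ inside $D$, Lemma \ref{lem.asy} for $m(J_{u_i})\ge 2c\, m(B_{u_i}\cap\T_\Om)$, and the second inequality of Lemma \ref{lem.x0} to absorb the boundary collar. The only cosmetic difference is your intermediate step passing through $\sum_i m(B_{u_i}\cap\T_{\Om_2})$, which the paper folds into a single inequality using $\Om_2\subset\Om$.
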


\begin{proof} First, note that the union in the statement is indeed measurable as this is a union of open sets in $N$. Consider the cover 
$$
\cal F:=\{B_u:u\in \T_{\Om_2}\cap (D-\partial_{R_2}D) \}
$$
of the bounded subset $\T_{\Om_2}\cap  (D-\partial_{R_2}D)$, where $R_2$ is given \eqref{rdouble}.
By Lemma \ref{lem.cov2},
we can find a countable subset $F
\subset \T_{\Om_2}\cap  (D-\partial_{R_2}D)$
such that the collection
$\{B_{u_i}:u_i\in F\}$
 covers $\T_{\Om_2}\cap (D-\partial_{R_2}D)$ and
\begin{equation}\label{eq.kap}
\sum_{u_i\in F} {\mathbbm 1}_{J_{u_i}}\le \kappa_*.
\end{equation}

By Lemma \ref{lem.chi}, we have $J_{u_i}\subset D$ for all $u_i\in F\subset  \T_{\Om_2}\cap  (D-\partial_{R_2}D)$.
Hence, using \eqref{eq.kap}, we get
$$
m\left(\bigcup_{u\in \T_{\Om_2} }J_u\cap D\right)\geq m\left(\bigcup_{u_i\in F}J_{u_i}\right)\geq \frac{1}{\kappa_*}\sum_{u_i\in F}m\left(J_{u_i}\right).
$$

Since $m(J_{u_i})\geq 2c \cdot m(B_{u_i}\cap \T_{\Om})$ by Lemma \ref{lem.asy} (recall that $ \Om_2 \subset \Om $), we have
\begin{align*}
&m\left(\bigcup_{u\in \T_{\Om_2}}J_u\cap D\right) \geq \frac{2c}{\kappa_*} \sum_{u_i\in F} m\left(B_{u_i}\cap\T_{\Om}\right)\geq \frac{2c}{\kappa_*}   m\left(\T_{\Om_2}\cap (D-\partial_{R_2}D) \right),
\end{align*}
where the last inequality holds as $\{B_{u_i}:u_i\in F\}$ is a cover of $\T_{\Om_2}\cap (D-\partial_{R_2}D)$.
Since $$
{2}\cdot m\left(\T_{\Om_2}\cap (D-\partial_{R_2}D)\right)\ge
m(\T_{\Om_2}\cap D) $$
by the second inequality of Lemma \ref{lem.x0},
the claim follows. 
\end{proof}

\begin{lemma}[Upper bound]\label{lem.cov3}
We have $$
m\left(\bigcup_{u\in\T_{\Om_2}}J_u\cap  D\right)<\frac{c}{\kappa_*} m(\T_{\Om_2}\cap D).$$
\end{lemma}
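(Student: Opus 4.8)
The plan is to bound $m(\bigcup_{u\in\T_{\Om_2}}J_u\cap D)$ from above by exploiting the defining property of the point $x_0$ chosen in Lemma \ref{lem.x0}. The key geometric input, established in Lemma \ref{lem.chi}, is that for every $u\in\T_{\Om_1}\cap(D-\partial_{R_2}D)$ we have $J_u\subset \T_{\tOm\cap\Xi}\cap D$; and for the \emph{remaining} indices $u\in\T_{\Om_2}$ that lie in the boundary collar $\partial_{R_2}D$, the set $J_u$ is contained, by \eqref{eq.uix}, in $ua_uN_RN_\eta a_u^{-1}$, which by \eqref{rdouble} is contained in a bounded enlargement of $u$, hence still inside $D$ (after possibly enlarging the collar) — in any case these $u$ contribute a set of measure at most $m(B_0(R_{x_0})\setminus B_0(R_{x_0}-R_2-\text{const}))$, which is controlled by $m(\partial_{R_2'}D)$ for a slightly larger $R_2'$. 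So first I would split $\T_{\Om_2}\cap D = (\T_{\Om_2}\cap(D-\partial_{R_2}D)) \sqcup (\T_{\Om_2}\cap\partial_{R_2}D)$ and correspondingly split the union of the $J_u$'s.

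For the main part, coming from $u\in\T_{\Om_2}\cap(D-\partial_{R_2}D)$, I would use $\bigcup_u J_u \cap D \subset \T_{\tOm\cap\Xi}\cap D$ directly, giving
\[
m\Big(\bigcup_{u\in\T_{\Om_2}\cap(D-\partial_{R_2}D)}J_u\cap D\Big)\le m(\T_{\tOm\cap\Xi}\cap D)=\int_{D}\mathbbm 1_{\tOm\cap\Xi}(x_0n)\,dn.
\]
By the first inequality of Lemma \ref{lem.x0}, this is strictly less than $\frac{c}{\kappa_*}\int_D\mathbbm 1_{\Om_2}(x_0n)\,dn=\frac{c}{\kappa_*}m(\T_{\Om_2}\cap D)$. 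The boundary contribution from $u\in\partial_{R_2}D$ is absorbed using the second inequality of Lemma \ref{lem.x0}: since $J_u$ for such $u$ sits within a bounded neighborhood of $u$ of radius controlled by $R_2$ (via \eqref{eq.uix} and \eqref{rdouble}), its union lies in $\partial_{R_2'}D$ for a fixed $R_2'$, and $m(\partial_{R_2}D\text{-type collar})<\tfrac12 m(\T_{\Om_2}\cap D)$ — actually one should arrange the constants so that collar term, too, is dominated by a small multiple of $\frac{c}{\kappa_*}m(\T_{\Om_2}\cap D)$; alternatively, and more cleanly, note $J_u\subset D$ already for \emph{all} $u\in\T_{\Om_2}\cap D$ once the ball $D$ is large enough relative to $R_2$, so that the whole union lies in $\T_{\tOm\cap\Xi}\cap D$ up to the collar, and the collar estimate of Lemma \ref{lem.x0} closes the gap.

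The main obstacle — and the reason the lemma is stated as a strict inequality matching Lemma \ref{lem.cov} — is precisely the bookkeeping at the boundary $\partial_{R_2}D$: a priori the sets $J_u$ for $u$ near $\partial D$ could poke outside $D$, so one cannot simply write $\bigcup_u J_u\cap D\subset \T_{\tOm\cap\Xi}$. Lemma \ref{lem.chi} resolves this for $u\in D-\partial_{R_2}D$, and the role of the second inequality in Lemma \ref{lem.x0} (chosen via Lemma \ref{lem.x1}) is exactly to ensure the measure of the part of $\T_{\Om_2}$ in the collar is negligible compared to $m(\T_{\Om_2}\cap D)$, so the bad indices contribute a controllably small amount. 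Once this is arranged, combining the display above with Lemma \ref{lem.cov} yields $\frac{c}{\kappa_*}m(\T_{\Om_2}\cap D)\le m(\bigcup_u J_u\cap D)<\frac{c}{\kappa_*}m(\T_{\Om_2}\cap D)$, a contradiction — which forces the assumption \eqref{con} to fail, i.e.\ $\ell_0=\la(g_0)\in\op{Stab}_G([\mu])$, completing via Lemma \ref{p0} the proof of \eqref{smu} and hence of Theorem \ref{mp}.
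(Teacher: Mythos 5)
Your main term is handled correctly: for $u\in\T_{\Om_2}\cap(D-\partial_{R_2}D)$ you invoke Lemma \ref{lem.chi} to land in $\T_{\tOm\cap\Xi}\cap D$ and then apply the first inequality of Lemma \ref{lem.x0}. But the collar term as you set it up cannot be closed, for two reasons. First, there is no room in the inequality: the target is the \emph{same} quantity $\frac{c}{\kappa_*}m(\T_{\Om_2}\cap D)$ that already bounds the main term, so adding any positive collar contribution on top of $m(\T_{\tOm\cap\Xi}\cap D)$ (which a priori can be arbitrarily close to $\frac{c}{\kappa_*}m(\T_{\Om_2}\cap D)$) overshoots; there is no "small multiple" to spare. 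Second, the second inequality of Lemma \ref{lem.x0} controls $m(\T_{\Om_2}\cap\partial_{R_2}D)$, i.e.\ the measure of the set of \emph{indices} in the collar, not the measure of $\bigcup_{u}J_u$ over those indices, and certainly not $m(\partial_{R_2'}D)$ itself, which is not small relative to $m(\T_{\Om_2}\cap D)$ in general. Converting a bound on the index set into a bound on the union of the $J_u$'s would require another covering argument, and even then the factor would not be $o(c/\kappa_*)$. You also omit a third class of indices: the union in the statement runs over \emph{all} $u\in\T_{\Om_2}$, including $u\notin D$ whose $J_u$ may still meet $D$; your decomposition of $\T_{\Om_2}\cap D$ does not see these.

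The gap dissolves once you notice that the collar condition in Lemma \ref{lem.chi} is only used for the conclusion $J_u\subset D$, not for $J_u\subset\T_{\tOm\cap\Xi}$: inspecting its proof, the memberships $x_0v\in\tOm$ and $x_0v\in\Xi$ (the latter via Lemma \ref{lem.P1x}) are derived for every $u\in\T_{\Om_1}$, and only the final step ``$v\in D$'' invokes $u\in D-\partial_{R_2}D$ together with \eqref{rdouble}. Since $\Om_2\subset\Om_1$, one therefore has $J_u\subset\T_{\tOm\cap\Xi}$ for \emph{every} $u\in\T_{\Om_2}$, whence
\begin{equation*}
\bigcup_{u\in\T_{\Om_2}}J_u\cap D\;\subset\;\T_{\tOm\cap\Xi}\cap D ,
\end{equation*}
with no boundary cases at all — the intersection with $D$ on the left makes $J_u\subset D$ irrelevant here. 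The first inequality of Lemma \ref{lem.x0} then gives the strict bound directly. (The second inequality of Lemma \ref{lem.x0} is needed only in the lower bound, Lemma \ref{lem.cov}, where one \emph{must} discard the collar indices to guarantee $J_u\subset D$ and then recover $m(\T_{\Om_2}\cap D)$ at the cost of the factor $2$.) Your concluding remark about combining the two bounds to contradict \eqref{con} is the correct use of the lemma, but it is not part of its proof.
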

\begin{proof}
By Lemma \ref{lem.chi} and the fact that $ \Om_2 \subset \Om_1 $, we have
$$
\bigcup_{u\in\T_{\Om_2}}J_u\cap D\subset  \T_{\tOm \cap \Xi}\cap  D.
$$

By the choice of $ x_0 $ satisfying the first inequality in Lemma \ref{lem.x0}, we have \begin{equation*}
{m(\T_{\tOm \cap \Xi}\cap  D)}  <\frac{c}{\kappa_*} {m(\T_{\Om_2}\cap D)},
\end{equation*}
implying the claim.
\end{proof}

These two lemmas yield a contradiction to the hypothesis \eqref{con} that
$\la(g_0)=\la(h_0^p) \not\in\op{Stab}_G([\mu])$.  As $p\ge p_0$ was arbitrary,
we deduce that $\la(h_0) \in \op{Stab}_G([\mu])$ by Lemma \ref{p0}.
Therefore
we have proved \eqref{smu} and hence Theorem \ref{prop.sch}.

\section{Measures supported on directional recurrent sets} 
Let $G=\prod_{i=1}^{\r}G_i$ be a product of simple real algebraic groups of rank one.
 Let $\Ga_0<G$ be a Zariski dense discrete subgroup of $G$, and
$\Ga$ be a Zariski dense normal subgroup
of $\Ga_0$.

 For $\v \in \inte \fa^+$, define
\be\label{eu3} \R^*_{\mathsf v}=\{\Ga\ba \Ga g\in \Ga\ba G:  \limsup_{t\to \infty}
\Gamma_0\ba \Gamma_0 g \exp ({t}\v) \ne \emptyset\}.\ee
As $\Ga$ is normal in $\G_0$, $\cal R_\v^*$ is well-defined.

 \medskip

The main goal of this section is to deduce the following theorem and corollary from Theorem \ref{mp}: \begin{thm}\label{normal}  For $\v \in \inte \fa^+$,
any $N$-invariant, ergodic  measure $\mu$ supported on $\R^*_\v$ is $P^\circ$ quasi-invariant. 
\end{thm}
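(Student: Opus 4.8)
The plan is to combine Theorem \ref{mp} with structural facts about Zariski dense subgroups and the definition of the recurrent set $\R^*_\v$. The key point is that a measure $\mu$ supported on $\R^*_\v$ is recurrent along the $\exp(\br_+\v)$-flow \emph{with respect to $\G_0$}, which forces $\cal S_\mu(\v)$ to be large. More precisely, first I would observe that for $\mu$-a.e.~$x=\Gamma g\in \R^*_\v$ there exist $t_n\to\infty$ and $\gamma_n\in\Ga_0$ with $a_{-t_n}g^{-1}\gamma_n g a_{t_n}$ bounded and staying away from $e$; passing to a subsequence, these accumulate on some nontrivial element of $\cal S_x(\v)$, viewed now inside the closure of $\bigcup_t a_{-t}g^{-1}\Ga_0 g a_t$. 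By $N$-ergodicity of $\mu$, this ``$\Ga_0$-version'' of $\cal S_\mu(\v)$ — call it $\cal S_\mu^{0}(\v)$ — is a fixed closed subset of $G$ for $\mu$-a.e.~$x$, and it is nonempty and not contained in $\{e\}$. The actual obstacle at this step is purely bookkeeping: one must set up a version of Theorem \ref{mp} (or rather a slight generalization, presumably the ``Theorem \ref{mp} for the pair $(\Ga,\Ga_0)$'' alluded to in the remark after Proposition \ref{pm1}) so that $\la\big(\cal S_\mu^0(\v)\cap(\frak O_{(e^+,e^-)}\cup\frak O_{(e^-,e^+)})\big)\subset\op{Stab}_G([\mu])$. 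I expect the proof of that generalization to be essentially identical to Section 4 with $\Ga_0$ replacing $\Ga$ everywhere in the definition of $\cal S_\mu(\v)$, since the argument there only used that the returning elements normalize nothing in particular — it used $\cal S_\mu(\v)\subset G$, not $\subset\Ga$.

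Next I would extract loxodromic elements in general position from $\cal S_\mu^0(\v)$. Here is where Zariski density of $\Ga$ (and hence of $\Ga_0$) enters: the limit set $\La$ is Zariski dense in $\cal F$, the set of loxodromic elements of $\Ga_0$ is large, and conjugating by $ga_t$ and taking limits produces, from the abundance of loxodromic elements with attracting/repelling pairs ranging over a Zariski dense subset of $\cal F^{(2)}$, elements of $\cal S_\mu^0(\v)$ that are loxodromic with $(y_h,e^+),(y_{h^{-1}},e^-)$ in general position. I would argue that the subgroup of $L=MA$ generated by $\{\la(h): h\in\cal S_\mu^0(\v)\cap\frak O_{(e^+,e^-)}\}$, which lies in $\op{Stab}_G([\mu])$ by the generalized Theorem \ref{mp}, is all of $AM^\circ$; this is the place where I would use that the $A$-components $\la^A(h^{-1})$ sweep out a Zariski dense (hence, being in the limit cone, a generating) subset of $A$ together with the fact that the $M$-components fill out $M^\circ$ because $M^\circ$ is connected and the relevant closure argument (as in \cite{LO1}, \cite{BLLO}) applies. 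The main obstacle is precisely this density/generation step: showing that the recurrence directions force enough loxodromic elements in general position into $\cal S_\mu^0(\v)$ so that their generalized Jordan projections generate $AM^\circ$ and not merely a proper closed subgroup. For this one typically needs that $\v\in\inte\fa^+$ so that all root functionals $\alpha_i$ are positive on $\v$ (guaranteeing the contraction estimates of Section 4 in every factor) and that $\Ga$ is Zariski dense so that the limit cone has nonempty interior and the attracting fixed points of $\Ga_0$-loxodromic elements are Zariski dense in $\La$.

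Finally, I would assemble the pieces: $\op{Stab}_G([\mu])$ is a closed subgroup of $G$ containing a subset of $AM^\circ$ whose generated subgroup is dense in $AM^\circ$; since $\op{Stab}_G([\mu])$ is closed, it contains $AM^\circ$, which is exactly the assertion that $\mu$ is $AM^\circ$-quasi-invariant. I would present this as: by Theorem \ref{mp} (applied with $\Ga_0$ in place of $\Ga$, as justified by the remark following Proposition \ref{pm1}), $\la\big(\cal S_\mu^0(\v)\cap\frak O_{(e^+,e^-)}\big)\subset\op{Stab}_G([\mu])$; by Zariski density of $\Ga_0$ and recurrence along $\v$, this set generates a dense subgroup of $AM^\circ$; closedness of the stabilizer finishes the proof. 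The only genuinely delicate point that I would need to write out carefully is the derivation that $\R^*_\v$-support implies $\cal S_\mu^0(\v)$ contains loxodromic elements \emph{in general position with $e^\pm$} — in rank one this is automatic, but in the product-of-rank-one setting one must check that the recurring $\Ga_0$-elements can be chosen, after conjugation, so that none of their $\cal F_i$-attracting points collide with $e_i^\pm$, which again follows from a Zariski density argument in $\prod_i\cal F_i^{(2)}$.
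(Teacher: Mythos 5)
There is a genuine gap in your first step. You propose to define a ``$\Gamma_0$-version'' $\cal S_\mu^0(\v)$ of the scenery set, as the limit points of $a_{-t}g^{-1}\Gamma_0 g a_t$, and to prove the analogue of Theorem \ref{mp} for it by ``replacing $\Ga$ by $\Ga_0$ everywhere,'' on the grounds that Section 4 only uses $\cal S_\mu(\v)\subset G$. That is a misreading of how Section 4 works. The argument there does not care that the returning elements come from $\Gamma$ as a group, but it does crucially use that each $g_u\in\op{Stab}_G(x_0ua_{t_u})\cap V_0$ genuinely fixes the point $x_0ua_{t_u}$ \emph{in $\Gamma\ba G$}, the space carrying $\mu$: the identity $x_0u(a_uva_u^{-1})=x_0ua_u(g_uv)a_u^{-1}$ in \eqref{eq.BH0} and Lemma \ref{lem.P1x}, and hence the entire covering/contradiction scheme of Lemmas \ref{lem.cov} and \ref{lem.cov3}, rests on $x_0ua_{t_u}g_u=x_0ua_{t_u}$. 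An element $a_{-t}g^{-1}\gamma_0 g a_t$ with $\gamma_0\in\Gamma_0\setminus\Gamma$ sends $\Gamma ga_t$ to $\Gamma\gamma_0 ga_t$, a different point of $\Gamma\ba G$, so the limits populating your $\cal S_\mu^0(\v)$ are not limits of stabilizers and the proof of Theorem \ref{mp} does not transfer verbatim; one would at best have to track an extra deck transformation of the cover $\Gamma\ba G\to\Gamma_0\ba G$ through the whole of Section 4.

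The paper avoids needing any such generalization by a short normality trick that your outline misses: pick $x=[g]$ generic with $\cal S_x(\v)=\cal S_\mu(\v)$, use the definition of $\R^*_\v$ to extract $\gamma_i\in\Gamma_0$, $t_i\to\infty$ with $\gamma_i g\exp(t_i\v)\to h_0$, and observe that for every $\gamma\in\Gamma$,
$$a_{t_i}^{-1}g^{-1}(\gamma_i^{-1}\gamma\gamma_i)ga_{t_i}=(\gamma_iga_{t_i})^{-1}\gamma(\gamma_iga_{t_i})\longrightarrow h_0^{-1}\gamma h_0,$$
where $\gamma_i^{-1}\gamma\gamma_i\in\Gamma$ precisely because $\Gamma\trianglelefteq\Gamma_0$. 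Hence the \emph{original} $\cal S_\mu(\v)$ already contains the Zariski dense discrete subgroup $\Sigma=h_0^{-1}\Gamma h_0$, and Theorem \ref{mp} applies as stated. From that point on your outline is essentially the paper's: Lemma \ref{zd} produces a Zariski dense Schottky subgroup $\Sigma'\subset\Sigma\cap\frak O_{(e^+,e^-)}$ (this is exactly your ``general position'' worry, handled by Zariski openness of $\frak O_{(\xi,\eta)}$ rather than by density of attracting points of recurring elements), and Theorem \ref{GGRR} (Guivarch--Raugi) shows the closed subgroup generated by $\la(\Sigma')\subset\op{Stab}_G([\mu])$ contains $AM^\circ$. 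So the second half of your plan is sound, but it needs a Zariski dense subgroup inside the honest $\cal S_\mu(\v)$ as input, and the normality of $\Gamma$ in $\Gamma_0$ is the mechanism that supplies it.
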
 

	\begin{cor}\label{normal2} Set $\R^*(\inte \fa^+):=\cup_{\v\in \inte\fa^+} \cal R_\v^*$.
Any $N$-invariant, ergodic measure $\mu$ supported on $\R^*(\inte \fa^+)$ is $P^\circ$ quasi-invariant. 
	\end{cor}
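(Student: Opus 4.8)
The plan is to deduce Corollary \ref{normal2} from Theorem \ref{normal} by a standard ergodic decomposition argument applied to the countable cover of $\R^*(\inte\fa^+)$ by the sets $\R^*_\v$. First I would observe that since $\mu$ is $N$-invariant and ergodic and is supported on $\R^*(\inte\fa^+)=\bigcup_{\v\in\inte\fa^+}\R^*_\v$, it suffices to show that $\mu$ is actually supported on $\R^*_\v$ for a \emph{single} $\v\in\inte\fa^+$; then Theorem \ref{normal} applies verbatim. The difficulty is that the union is over an uncountable set of directions, so one cannot immediately conclude that one $\R^*_\v$ is conull. The resolution is to note a monotonicity/countability feature: if $\v'$ lies in the (open) cone spanned by $\v$, or more precisely if one picks a countable dense set $\{\v_j\}\subset\inte\fa^+$, then every $\R^*_\v$ is contained in $\bigcup_j \R^*_{\v_j}$ up to the behaviour of recurrence under small perturbations of the direction.

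The key step is therefore a covering lemma: $\R^*(\inte\fa^+)=\bigcup_{j=1}^\infty \R^*_{\v_j}$ for a fixed countable dense subset $\{\v_j\}$ of $\inte\fa^+$. To see this, suppose $x=\Ga g\in\R^*_\v$, so $\Ga_0\ba\Ga_0 g\exp(t_i\v)$ stays in a compact set $C$ for some $t_i\to\infty$. Writing $\exp(t_i\v)=\exp(t_i\v_j)\exp(t_i(\v-\v_j))$ and noting that $\exp(t_i(\v-\v_j))$ need not be bounded, one instead argues as follows: pick $j$ with $\v_j$ close to $\v$; along a further subsequence the points $g\exp(t_i\v)$ converge in $\Ga_0\ba G$, and since $A$ acts continuously, replacing $\v$ by a nearby $\v_j$ and adjusting $t_i$ by bounded amounts (using that $\exp(s(\v-\v_j))$ ranges over a relatively compact set for $|s|\le \delta t_i$ with $\delta$ small relative to $\|\v-\v_j\|$ — this is the point requiring care) keeps the orbit in an enlarged compact set. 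A cleaner route: observe directly from the definition \eqref{eu3} that $\R^*_\v$ depends on $\v$ only through the ray $\br_+\v$, and that the set of rays is second countable, so $\R^*(\inte\fa^+)$ is a countable union of the $\R^*_{\v_j}$ provided the map $\v\mapsto \R^*_\v$ is, in an appropriate sense, locally constant or monotone along rational approximations; alternatively one shows $\R^*_\v\subset\R^*_{\v_j}$ whenever $\v_j$ is a positive rational multiple of a vector sufficiently near $\v$, using that boundedness of $g\exp(t_i\v)$ forces boundedness of $g\exp(t_i'\v_j)$ for a suitable sequence $t_i'\to\infty$ by a compactness and continuity argument in $\Ga_0\ba G$.

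Granting the covering lemma, the conclusion is immediate: since $\mu$ is $N$-ergodic and each $\R^*_{\v_j}$ is $N$-invariant (indeed $P$-invariant, as in the discussion around \eqref{eu}, because $AM$ centralizes $\exp(\br\v_j)$), each $\R^*_{\v_j}$ is either $\mu$-null or $\mu$-conull. As their union is conull and the union is countable, at least one $\R^*_{\v_j}$ is conull. Thus $\mu$ is supported on $\R^*_{\v_j}$, and Theorem \ref{normal} gives that $\mu$ is $AM^\circ$ quasi-invariant.

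The main obstacle is the covering lemma, i.e.\ verifying that finitely or countably many directions suffice to capture all directional recurrence; this requires a careful compactness argument showing that recurrence along $\exp(t\v)$ for some direction $\v\in\inte\fa^+$ implies recurrence along $\exp(t\v_j)$ for a nearby rational direction, exploiting that $\Ga_0\ba G$ is locally compact and that $A$ acts by homeomorphisms, together with the elementary fact that for $\v,\v_j$ close the set $\{\exp(s(\v-\v_j)):|s|\le T\}$ is controlled relative to the recurrence window. Everything after that is a one-line ergodicity argument.
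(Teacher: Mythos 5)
Your overall strategy (localize $\mu$ to a single $\R^*_\v$, then invoke Theorem \ref{normal}) is the same as the paper's, but the covering lemma at the heart of your reduction --- that $\R^*(\inte \fa^+)=\bigcup_j \R^*_{\v_j}$ for a countable dense family of directions --- is false, and no refinement of the compactness argument will rescue it. The obstruction is exactly the one you flag and then try to argue around: for $\br \v_j\ne \br \v$ one has $\exp(t_i\v_j)=\exp(t_i\v)\exp(t_i(\v_j-\v))$ with $t_i\|\v_j-\v\|\to\infty$, and since $\exp(t(\v_j-\v))$ leaves every compact subset of $G$, recurrence along $\v$ gives no control on recurrence along $\v_j$; adjusting the times by bounded amounts, or rescaling $\v_j$, only moves you within the ray $\br_+\v_j$ and cannot cancel a divergent transverse displacement. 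Concretely, Theorem \ref{bb2} shows that for $\Ga$ Anosov with $\r\le 3$ the measure $\m_\v^{\BR}$ gives full measure to $\R_\v$ but zero measure to $\R_\u$ whenever $\br\u\ne\br\v$; since all but countably many rays are omitted from any countable family $\{\v_j\}$, an inclusion $\R_\v\subset\bigcup_j\R_{\v_j}$ for such a $\v$ would force $\m_\v^{\BR}(\Ga\ba G)=0$, a contradiction. The directional recurrent sets for distinct rays are essentially disjoint, not stable under perturbation of the direction.

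The paper avoids any reduction to countably many directions. Its Lemma \ref{Polish} shows that for every \emph{open} subset $U$ of the unit sphere $S\subset\inte\fa^+$, the full uncountable union $\R^*(U)=\bigcup_{\u\in U}\R^*_\u$ is universally measurable --- this is the genuinely nontrivial point, handled by applying the Measurable Projection Theorem of Castaing--Valadier to the jointly measurable set $\{(x,\u): \liminf_{t\to\infty} d(x,x\exp(t\u))<\infty\}$ --- and is $N$-invariant, hence $\mu$-null or conull by ergodicity. One then runs a nested refinement: cover $S$ by countably many balls of diameter at most $2^{-k}$, select at each stage one ball $U_{k,i_k}$ with $\R^*(U_{k,i_k})$ conull, and shrink to the single direction $\v=\bigcap_k U_{k,i_k}$, for which $\R^*_\v$ is conull. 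To repair your argument you should replace the covering lemma by this measurability-plus-nesting step; the final appeal to Theorem \ref{normal} is then exactly as you wrote it.
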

We remark that any $N$-invariant, ergodic and $P^\circ$-invariant measure on $\cal E$ is of the form $\m^{\BR}_\nu|_Y$ for some $\Gamma$-conformal measure $\nu$ on $\La$ and $P^\circ$-minimal subset $Y\subset \G\ba G$ (see \eqref{eq.BR} and \cite[Prop. 7.2]{LO2}).

Proposition \ref{pm1} is a special case of Theorem \ref{normal} when
$\G=\G_0$ and $M$ is connected. We recall that
as long as none of $G_i$ is isomorphic $\op{SL}_2(\br)$,  $M$ is always connected \cite[Lem. 2.4]{Win}.

\subsection*{Properties of Zariski dense groups} In the following Theorem \ref{GGRR}, and Lemmas \ref{oxi} and \ref{zd},
we let $\Sigma$ be a Zariski dense discrete subgroup of any semisimple real algebraic group $G$. Note that
 $\Sigma$ contains a Zariski dense subset of loxodromic elements \cite{Be}. 
 The following theorem can be deduced from
 the work of Guivarch and Raugi \cite{GR}.  
\begin{thm} \cite[Cor. 3.6]{LO2} \label{GGRR}
Any closed subgroup of $MA$ containing the generalized Jordan projection $\lambda(\Sigma)$ contains $M^\circ A$.
\end{thm}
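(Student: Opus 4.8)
The plan is to reduce the statement to a single assertion about the closed subgroup $H:=\overline{\langle\lambda(\Sigma)\rangle}$ of $MA$ generated by the generalized Jordan projections, namely that $H\supseteq M^\circ A$; since every closed subgroup of $MA$ containing $\lambda(\Sigma)$ contains $H$, this suffices. First I would record that $\lambda(g)$ is only defined for loxodromic $g$ with $y_g\in Ne^-$, so $\lambda(\Sigma)=\lambda(\Sigma')$, where $\Sigma'$ denotes the set of such elements of $\Sigma$. This $\Sigma'$ is still Zariski dense in $G$: it is the intersection of the Zariski dense set $\Sigma$ with a nonempty Zariski open subset of $G$, and a (Zariski connected, hence irreducible) algebraic group is not the union of two proper closed subvarieties. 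Writing $MA=M\times A$ (legitimate since $M\subset K$ and $A\cap K=\{e\}$) and letting $\pi_M,\pi_A$ be the two coordinate projections, the goal becomes $H\supseteq M^\circ\times A$.

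For the $A$-coordinate I would invoke Benoist's results \cite{Be}: by the formula recalled in Section~2 one has $\pi_A(\lambda(\gamma))=\lambda^A(\gamma^{-1})$ for $\gamma\in\Sigma'$, and since $\{\gamma^{-1}:\gamma\in\Sigma'\}$ is again Zariski dense, the subgroup of $A$ generated by these Jordan projections is dense in $A$; hence $\pi_A(H)=A$. The genuinely new input is needed for the $M$-coordinate, and here I would appeal to Guivarch--Raugi \cite{GR}. The point is that Zariski density of $\Sigma$ forces the boundary action to be proximal and strongly irreducible, and the equidistribution/density results of \cite{GR} for the $MA$-valued Jordan (or Cartan) data of such a group imply that the $M$-parts $\{\pi_M(\lambda(\gamma)):\gamma\in\Sigma'\}$ cannot be trapped — even after twisting by a continuous homomorphism of $A$ — inside any proper closed subset compatible with a proper closed subgroup of $M\times A$ surjecting onto $A$. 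Concretely, if $H\not\supseteq M^\circ\times A$ then, using $\pi_A(H)=A$, either $H\cap(M\times\{e\})$ fails to contain $M^\circ$, or $H$ is a ``skew'' subgroup, the graph of a nontrivial continuous homomorphism $A\to M/(H\cap M)$; in both cases the $M$-components of the $\lambda(\gamma)$ would be confined to a proper dynamically invariant structure, contradicting \cite{GR}. Therefore $H\supseteq M^\circ\times A=M^\circ A$, as desired.

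I expect the main obstacle to be the precise transcription of the results of Guivarch--Raugi into the algebraic statement $\overline{\langle\lambda(\Sigma')\rangle}\supseteq M^\circ A$: their theorems are formulated in terms of the asymptotics of the $MA$-components of group elements (and of random products) and the ergodic properties of the associated boundary dynamics, so care is needed to convert ``the $M$-parts equidistribute on $M^\circ$ in the appropriate frequency sense'' into ``the closed group they generate contains $M^\circ$'', and to verify that passing to the Zariski dense \emph{subset} $\Sigma'$ — forced on us by the domain of definition of $\lambda$ — does not weaken any hypothesis, which it does not, precisely because $\Sigma'$ remains Zariski dense. By comparison the $A$-direction is routine once \cite{Be} is available.
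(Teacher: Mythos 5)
First, a point of reference: the paper does not prove Theorem \ref{GGRR} at all — it is imported verbatim from \cite[Cor.~3.6]{LO2}, with only the remark that it ``can be deduced from the work of Guivarch and Raugi.'' So there is no internal proof to compare against, and what you are really reconstructing is the argument of \cite{LO2}. Your overall architecture — reduce to showing $H:=\overline{\langle\lambda(\Sigma)\rangle}\supseteq M^\circ A$, get the $A$-direction from Benoist's density of the group generated by the Jordan projections, get the $M$-direction from Guivarch--Raugi, and finish with closed-subgroup bookkeeping (where compactness of $M$ is what makes $\pi_A(H)$ closed, and finiteness of $M/M^\circ$ plus the fact that a proper closed subgroup of $A\cong\br^{\r}$ has uncountable index rules out the ``skew'' case once $M^\circ\subset H$) — is indeed the route taken there.

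Two steps, however, are not sound as written. (i) The set of loxodromic elements of $G$ is \emph{not} Zariski open (in $\op{SL}_2(\br)$ it is $\{g:|\op{tr}g|>2\}$), so your justification that $\Sigma'$ is ``$\Sigma$ intersected with a nonempty Zariski open set'' fails; the correct input is \cite[Lem.~3.6]{Be} (used in this paper in the proof of Lemma \ref{oxi}): the loxodromic $\gamma\in\Sigma$ with $y_\gamma$ in the prescribed open cell $Ne^-$ already form a Zariski \emph{dense} subset, which is what you need. (ii) More seriously, the $M$-part of your argument is circular: ``the $M$-components cannot be trapped in any proper structure compatible with a proper closed subgroup surjecting onto $A$'' is a paraphrase of the conclusion, not a derivation. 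The danger you must exclude is precisely that $H$ is the graph of a nontrivial homomorphism correlating the $A$- and $M$-coordinates, and no separate density statement for the $A$-parts plus a separate one for the $M$-parts can exclude it; one needs the joint non-arithmeticity theorem of Guivarch--Raugi for the full $MA$-valued projection $\lambda(\gamma)$ of a Zariski dense subsemigroup (\cite[\S2]{GR}, as packaged in \cite[Thm.~3.3, Cor.~3.6]{LO2}). Until that specific statement is quoted and applied, the core of the proof is missing — which you candidly acknowledge, but it means the proposal is a correct plan rather than a proof.
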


 We denote by $\La(\Sigma)\subset \cal F$ the limit set of $\Sigma$, which is the unique ${\Sigma}$-minimal subset. 

We refer to
\cite[Def. 7.1]{ELO} for the definition of a Schottky subgroup of $G$.
\begin{lemma}\label{oxi}
Let $\cal O$ be a Zariski open subset of $\cal F$.
Any Zariski dense subgroup $\Sigma$ of $G$ contains a Zariski dense Schottky subgroup $\Sigma_1$ with $\La(\Sigma_1)\subset\cal O$.
\end{lemma}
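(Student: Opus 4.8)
\textbf{Proof plan for Lemma \ref{oxi}.}
The plan is to build a Schottky subgroup whose limit set is concentrated near a single point of $\cal F$ that we are free to choose inside $\cal O$. First I would use Zariski density of $\Sigma$ to produce a ``rich'' supply of loxodromic elements: by Benoist's theorem $\Sigma$ contains a Zariski dense set of loxodromic elements, and more precisely one can find loxodromic elements $g_1,\dots,g_k\in\Sigma$ whose attracting fixed points $y_{g_1},\dots,y_{g_k}$ together with the attracting fixed points $y_{g_1^{-1}},\dots,y_{g_k^{-1}}$ of their inverses are in pairwise general position and span enough of $\cal F$ to guarantee Zariski density of the group they generate. (This standard ping-pong setup is exactly the content of \cite[Def.~7.1, Lem.~7.2]{ELO} and \cite{Qu}.) Since the set of loxodromic elements with attracting fixed point lying in the Zariski open set $\cal O$ is itself Zariski dense — the fixed-point map is algebraic and $\cal O$ is nonempty Zariski open, hence $\Sigma$-unavoidable — I may moreover arrange each $y_{g_j}\in\cal O$.

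Next I would replace each $g_j$ by a sufficiently high power $g_j^{n_j}$. Passing to high powers does not change the attracting/repelling fixed points, keeps the group Zariski dense (a finite-index-type argument: the powers still generate a Zariski dense group because the $y_{g_j}$, $y_{g_j^{-1}}$ remain in general position and in sufficiently generic position), and makes the $g_j^{n_j}$ act as strong contractions/expansions on small neighborhoods of $y_{g_j^{\pm 1}}$ — this is precisely the ``high powers attract a neighborhood of the attracting fixed point'' phenomenon already invoked in the discussion following Proposition \ref{pm1}. Choosing disjoint Schottky neighborhoods $U_j^{\pm}$ of $y_{g_j^{\pm 1}}$ with $U_j^{+}\cup U_j^{-}\subset\cal O$, the ping-pong lemma then shows that $\Sigma_1:=\langle g_1^{n_1},\dots,g_k^{n_k}\rangle$ is a free Schottky subgroup in the sense of \cite[Def.~7.1]{ELO}, and its limit set $\La(\Sigma_1)$ is contained in $\bigcup_j(U_j^{+}\cup U_j^{-})\subset\cal O$, since every nontrivial reduced word maps the complement of its first letter's repelling region into one of the $U_j^{\pm}$.

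The main obstacle is the simultaneous bookkeeping: one must choose the generators $g_j$ so that (i) the resulting Schottky group is Zariski dense, not just discrete free, and (ii) all $2k$ relevant fixed points lie in $\cal O$ with disjoint ping-pong neighborhoods also inside $\cal O$. Requirement (i) is the delicate one — Zariski density of a Schottky group is not automatic and must be engineered by choosing the $g_j$ generically (using that the ``bad'' configurations form a proper subvariety, so Zariski density of $\Sigma$ lets us avoid them); this is exactly why \cite{Qu} and \cite[Lem.~7.2]{ELO} are cited. Once the generators are fixed, shrinking the neighborhoods and raising the powers to secure the ping-pong inclusions is routine, and the containment $\La(\Sigma_1)\subset\cal O$ follows immediately from the construction.
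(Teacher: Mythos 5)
Your overall strategy --- produce loxodromic elements of $\Sigma$ whose attracting/repelling fixed points lie in small neighborhoods inside $\cal O$, pass to high powers, play ping-pong, and read off $\La(\Sigma_1)\subset\cal O$ from the fact that every reduced word sends a base point into one of the ping-pong neighborhoods --- is the same as the paper's, which carries it out with two generators, quoting \cite[Lem.~3.6]{Be} to place $(y_\gamma,y_{\gamma^{-1}})$ in prescribed open sets $b_i^{\pm}$ with closures in $\cal O$, and then using minimality of $\La(\Sigma_1)$ inside $\overline{\Sigma_1\xi_0}$.

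The genuine gap is at your point (i), the Zariski density of $\Sigma_1$, and both justifications you offer are insufficient. First, the claim that the high powers ``still generate a Zariski dense group because the $y_{g_j}$, $y_{g_j^{-1}}$ remain in general position'' is false: general position of fixed points says nothing about the Zariski closure of the generated group. For instance, in $G=G_1\times G_2$ with $G_1\simeq G_2$, the graph of an algebraic isomorphism contains loxodromic elements whose fixed points are pairwise in general position in $\F=\F_1\times\F_2$, yet any subgroup they generate lies in a proper algebraic subgroup. Nor is the group generated by the powers of finite index in the original group (in a free group $\langle a,b\rangle$ the subgroup $\langle a^m,b^m\rangle$ has infinite index), so there is no ``finite-index-type argument'' available. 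Second, ``the bad configurations form a proper subvariety'' cannot simply be asserted: failure of Zariski density is a priori a countable union of algebraic conditions, one for each candidate proper subgroup. The paper closes exactly this gap with two inputs from Tits \cite{Ti}: (a) reducing to $\Sigma$ finitely generated, there is an exponent $n=n_\Sigma$ with $\langle\gamma^{n}\rangle$ Zariski connected for every $\gamma\in\Sigma$, which is what makes passing to arbitrary further powers harmless, since $\overline{\langle\gamma^{nk}\rangle}=\overline{\langle\gamma^{n}\rangle}$ when these closures are connected; and (b) \cite[Prop.~4.4]{Ti}, which provides a single proper Zariski closed set $F_{\gamma_1}$ containing every proper Zariski closed connected subgroup through $\gamma_1$, so that choosing the second generator outside $F_{\gamma_1}$ (possible since the loxodromic elements with prescribed fixed-point positions are Zariski dense in $\Sigma$ by \cite[Lem.~3.6]{Be}) forces $\langle\gamma_1^{k},\gamma_2^{k}\rangle$ to be Zariski dense for all $k\ge 1$. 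Without (a) and (b), or some substitute, your construction yields a Schottky group with limit set in $\cal O$ but with no control on its Zariski closure.
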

\begin{proof} This can be proved similarly to the proof of \cite[Prop. 4.3]{Be} (see also proof of \cite[Lem. 7.3]{ELO}). 
First, we may assume that $\Sigma$ is finitely generated. Hence there exists  an integer $n:=n_\Sigma\geq 1$ such that the subgroup $\langle \ga^{n}\rangle$ generated by $\ga^{n}$ is Zariski connected for all $\ga\in\Sigma$ \cite{Ti}.

Since $\cal O$ and $\cal F^{(2)}$ are Zariski open
in $\cal F$ and
$\cal F\times \cal F$ respectively, we can choose open subsets $b_i^{\pm}$, $i=1,2$ whose closures are contained in $\cal O$
and which are pairwise in general position.\footnote{Two subsets $A$ and $B$ of $\F$ are in general position if $A\times B\subset \F^{(2)}$.} 
By \cite[Lemma 3.6]{Be}, for each $i=1,2$, the subset
$
\{\ga\in\Sigma:\text{loxodromic}, \, (y_\ga,y_{\ga^{-1}})\in b_i^+\times b_i^-\}
$
is Zariski dense. Hence there exists $g_1\in\Sigma$ such that $\ga_1:=g_1^{n}$ is loxodromic and $(y_{\ga_1},y_{\ga_1^{-1}})\in b_1^+\times b_1^-$.
By \cite[Proposition 4.4]{Ti}, there exists a proper Zariski closed subset $F_{\ga_1}\subset G$ containing all proper Zariski closed and Zariski connected subgroups of $G$ containing $\ga_1$.
Hence we can find a loxodromic element $g_2\in \Sigma - F_{\ga_1}$ such that $(y_{g_2},y_{g_2^{-1}})\in b_2^+\times b_2^-$.
Set $\ga_2:=g_2^{n}$.  
By definition of $n$ and $F_{\ga_1}$, the subgroup $\Sigma_k:=\langle\ga_1^k,\ga_2^k\rangle$ is Zariski dense for any $k\geq 1$.

We can find open subsets $B_i^{\pm}\subset \cal F$, $i=1,2$
such that $\cap_{i=1}^2( B_i^{+}\cap B_i^-) \ne \emptyset$ and $\ga_i^{\pm k}(B_i^{\pm})\subset b_i^{\pm}$
 for all sufficiently large $k\geq1$. Fix one such $k$.
If we take $\xi_0\in \cap_{i=1}^2( B_i^{+}\cap B_i^-)$, then
$\Sigma_k \xi_0$ is contained in the union $\cup_{i=1,2} (b_i^{+}\cup b_i^{-})\subset \cal O$. Since the closure
of $\Sigma_k\xi_0$ contains $\Lambda(\Sigma_k)$, which is the minimal $\Sigma_k$-subset,
it follows that $\Lambda(\Sigma_k)\subset \cal O.$
\end{proof}

\begin{lem}\label{zd}
 For any $\xi, \eta\in \F$, set
 \be\label{og} \frak O_{(\xi,\eta)}:=\{g\in G:\text{loxodromic}, (y_{g}, \xi), (y_{g^{-1}}, \eta)\in \F^{(2)}\}.\ee
For any Zariski dense subgroup $\Sigma$ of $G$, 
the intersection $\Sigma\cap \frak O_{(\xi, \eta)}$ contains a Zariski dense Schottky subgroup of $G$.
\end{lem}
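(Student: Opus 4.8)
The plan is to combine Lemma \ref{oxi} with an argument that the relevant open set $\mathfrak O_{(\xi,\eta)}$ can be detected on the Furstenberg boundary. First I would observe that for a loxodromic $g$, the conditions $(y_g,\xi)\in\cal F^{(2)}$ and $(y_{g^{-1}},\eta)\in\cal F^{(2)}$ only involve the attracting fixed points $y_g=y_g$ and $y_{g^{-1}}$ of $g$ and $g^{-1}$; in the product case $G=\prod G_i$ these factor componentwise, and in each rank one factor being in general position just means being distinct. So $\mathfrak O_{(\xi,\eta)}$ is invariant under replacing $g$ by a high power $g^k$, and — crucially — it contains every loxodromic $g$ whose attracting pair $(y_g,y_{g^{-1}})$ lies in a product $b^+\times b^-$ of sufficiently small open sets chosen in general position with $\xi$ and with $\eta$ respectively.

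Next I would run the construction in the proof of Lemma \ref{oxi} almost verbatim, but with the extra requirement on the location of the fixed-point pairs. Concretely: using Zariski openness of $\cal F^{(2)}$ and the fact that $\{\xi\}\times\cal F^{(2)}$-type conditions cut out Zariski open sets, pick open subsets $b_1^\pm, b_2^\pm\subset\cal F$, pairwise in general position, with $\cl{b_i^+}$ in general position with $\xi$ and $\cl{b_i^-}$ in general position with $\eta$. By \cite[Lemma 3.6]{Be}, for each $i$ the set of loxodromic $\ga\in\Sigma$ with $(y_\ga,y_{\ga^{-1}})\in b_i^+\times b_i^-$ is Zariski dense, so we may select loxodromic generators $g_1,g_2$ (and pass to powers $\ga_i=g_i^{n}$ with $n$ as in \cite{Ti}, avoiding the Zariski closed set $F_{\ga_1}$ from \cite[Proposition 4.4]{Ti}) so that $\Sigma_k=\lan\ga_1^k,\ga_2^k\ran$ is Zariski dense for all $k\ge 1$. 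Then the ping-pong argument produces, for large $k$, a Schottky subgroup $\Sigma_k$ all of whose nontrivial elements are loxodromic with attracting pair in $\cup_i (b_i^+\cup b_i^-)\times(\text{the dual set})$; by the choice of the $b_i^\pm$ every such element lies in $\mathfrak O_{(\xi,\eta)}$, and $\Lambda(\Sigma_k)\subset\cup_i(b_i^+\cup b_i^-)$.

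The one point requiring a little care — and what I expect to be the main obstacle — is verifying that all nontrivial elements of the Schottky group $\Sigma_k$, not just the generators, have their attracting and repelling fixed points in the prescribed small sets and hence lie in $\mathfrak O_{(\xi,\eta)}$. This is where the standard Schottky/ping-pong setup pays off: with $B_i^\pm$ chosen so that $\ga_i^{\pm k}(B_i^\pm)\subset b_i^\pm$, any reduced word $w$ in $\ga_1^{\pm k},\ga_2^{\pm k}$ acts on a suitable $B_j^\pm$ as a strict contraction into some $b_i^\pm$, forcing $y_w\in\cl{b_i^+}$ (for the leading letter) and $y_{w^{-1}}\in\cl{b_{i'}^-}$ (for the inverse of the trailing letter); since each $\cl{b_i^+}$ is in general position with $\xi$ and each $\cl{b_i^-}$ with $\eta$, the pair $(y_w,\xi),(y_{w^{-1}},\eta)$ lies in $\cal F^{(2)}$ as required. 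Thus $\Sigma_k\setminus\{e\}\subset\mathfrak O_{(\xi,\eta)}$, and $\Sigma_k$ is the desired Zariski dense Schottky subgroup of $G$ contained in $\mathfrak O_{(\xi,\eta)}$, which completes the proof.
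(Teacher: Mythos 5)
Your overall strategy is sound and lands in the same place as the paper, but the execution differs and contains one slip worth flagging. The paper does not re-run the ping-pong construction with constrained $b_i^\pm$; it simply applies Lemma \ref{oxi} twice as a black box: first it extracts from $\Sigma$ a Zariski dense Schottky subgroup $\Sigma_1$ with $\La(\Sigma_1)\subset\cal O_\xi:=\{\xi':(\xi,\xi')\in\F^{(2)}\}$, then it applies the lemma again \emph{inside} $\Sigma_1$ to get $\Sigma_2$ with $\La(\Sigma_2)\subset\cal O_\eta$; since $\La(\Sigma_2)\subset\La(\Sigma_1)$, all fixed points $y_{\ga^{\pm1}}$ of elements of $\Sigma_2$ lie in $\cal O_\xi\cap\cal O_\eta$ and the conclusion is immediate. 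This nesting trick avoids exactly the bookkeeping your version has to do by hand, and that is where your argument as written goes wrong: in a Schottky group a reduced word $w$ whose leading letter is $\ga_i^{-k}$ has attracting fixed point in $\overline{b_i^-}$, not $\overline{b_i^+}$, so with your choice (``$\overline{b_i^+}$ in general position with $\xi$ and $\overline{b_i^-}$ with $\eta$'') the claim $(y_w,\xi)\in\F^{(2)}$ fails for such words. The repair is one line — choose all four sets $b_1^\pm,b_2^\pm$ with closures inside the nonempty Zariski open set $\cal O_\xi\cap\cal O_\eta$, which \cite[Lemma 3.6]{Be} permits; then $\La(\Sigma_k)\subset\bigcup_i(b_i^+\cup b_i^-)\subset\cal O_\xi\cap\cal O_\eta$ contains all attracting and repelling fixed points and you are done. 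With that correction your proof is valid; it is essentially a merged, single-pass version of the paper's two applications of Lemma \ref{oxi}, at the cost of having to track fixed points of arbitrary words rather than quoting the containment of limit sets.
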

\begin{proof} For $\xi\in \cal F$,
the subset $\cal O_\xi:=\{\xi'\in\cal F : (\xi,\xi')\in\cal F^{(2)}\}$
is Zariski open.
By Lemma \ref{oxi}, $\Sigma$ contains a Zariski dense Schottky subgroup $\Sigma_1$ consisting of loxodromic elements and with $\Lambda(\Sigma_1)\subset \cal O_{\xi}$. Now $\Sigma_1$ contains
a Zariski dense Schottky subgroup $\Sigma_2$ with $\Lambda(\Sigma_2)\subset \cal O_{\eta}$.
Then $\Sigma_2\subset \frak O_{(\xi, \eta)}$ since $$\{y_{\ga^{\pm 1}}\in \cal F :\ga\in \Sigma_2\}\subset \Lambda(\Sigma_2)\subset  \cal O_\eta\cap \cal O_\xi.$$
\end{proof}

\subsection*{Proof of Theorem \ref{normal}}
As $\mu$ is supported on $\cal R_\v^*$,
there exists $x=[g]\in \cal R_\v^*$  such that 
$\cal S_\mu(\v)=\cal S_x(\v)$. By the definition of $\cal R_\v^*$,
 there exist $\gamma_i\in \Ga_0$ and $t_i\to +\infty$
such that $\gamma_i g \exp (t_i\v) $ converges to some $h_0\in G$. Since $\G$ is normal in $ \G_0$,
it follows that  $\cal S_x (\v)$ contains $\Sigma:=h_0^{-1}\Gamma h_0$, and hence
 $$\cal S_\mu(\v) \supset \Sigma.$$
Hence by Theorem \ref{mp},
$$\la(\Sigma\cap \frak O_{(e^+, e^-)})  \subset \op{Stab}_{G}([\mu]).$$
Since $\Sigma$ is Zariski dense, by Lemma \ref{zd}, the intersection $\Sigma \cap \frak O_{(e^+, e^-)}$ contains a Zariski dense discrete subgroup, say $\Sigma'$.
Since the closure of the subgroup generated by $\la(\Sigma')$ contains $AM^\circ$ by Theorem \ref{GGRR}, we get $AM^\circ \subset \op{Stab}_{G}([\mu])$, proving the claim.

\subsection*{Proof of Corollary \ref{normal2}}
	By Theorem \ref{normal}, it suffices to show the following lemma:
	\begin{lemma}\label{Polish} 
	Any $N$-invariant, ergodic measure $\mu$ supported on $\R^*(\inte \fa^+)$
is supported on $\R^*_\v$ for some $\v\in \inte\fa^+$.
	\end{lemma}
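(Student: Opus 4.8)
The plan is to combine a short descriptive–set–theoretic argument with the $N$-ergodicity of $\mu$. Since recurrence of the forward $\exp(\br_{>0}\v)$-orbit is unchanged under the reparametrization $t\mapsto ct$, the set $\R^*_\v$ depends only on the ray $\br_{>0}\v$, so I will regard the parameter as ranging over $W:=\mathbb P(\inte\fa^+)$, an open subset of a simplex and hence a Polish space, with the assignment $[\v]\mapsto\R^*_\v$ defined on $W$. The first step is to observe that
\[
E:=\{(x,[\v])\in(\Ga\ba G)\times W:\ x\in\R^*_\v\}
\]
is Borel. Indeed, writing $\pi\colon\Ga\ba G\to\Ga_0\ba G$ for the factor map and fixing an exhaustion $\Omega_1\subset\Omega_2\subset\cdots$ of $\Ga_0\ba G$ by compact sets, a point $x$ lies in $\R^*_\v$ iff $\pi(x)\exp(t\v)\in\Omega_m$ for some $m$ and arbitrarily large $t$, so
\[
E=\bigcup_{m}\bigcap_{n\in\N}\bigcup_{\substack{t\in\Q,\ t\ge n}}\bigl\{(x,[\v]):\ x\exp(t\v)\in\pi^{-1}(\Omega_m)\bigr\},
\]
a countable Boolean combination of closed sets. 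In particular each slice $\R^*_\v=E^{[\v]}$ is Borel and, for each $x$, the set of recurrent directions $\mathsf{Dir}(x):=E_x=\{[\v]\in W:x\in\R^*_\v\}$ is Borel.

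Next I would use that $\R^*_\v$ is $N$-invariant for every $[\v]$ — in fact $P$-invariant: since $\Ga$ is normal in $\Ga_0$ and, for $n\in N$ and $\v\in\inte\fa^+$, one has $\exp(-t\v)n\exp(t\v)\to e$ as $t\to+\infty$, the forward $\exp(\br_{>0}\v)$-orbits of $\pi(x)$ and $\pi(xn)=\pi(x)n$ in $\Ga_0\ba G$ stay within distance $o(1)$ of each other and recur simultaneously. By $N$-ergodicity of $\mu$ each $\R^*_\v$ is therefore either $\mu$-null or $\mu$-conull; put $B:=\{[\v]\in W:\ \R^*_\v\text{ is }\mu\text{-conull}\}$. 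The lemma is equivalent to the assertion $B\neq\emptyset$.

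Now consider the map $\Phi\colon\Ga\ba G\to\mathscr F(W)$, $x\mapsto\overline{\mathsf{Dir}(x)}$, where $\mathscr F(W)$ denotes the standard Borel (Effros) space of closed subsets of the Polish space $W$; the Borel-ness of $E$ renders $\Phi$ $\mu$-measurable, and $\Phi$ is $N$-invariant because $\mathsf{Dir}$ is. Since a measurable $N$-invariant map from an $N$-ergodic system to a standard Borel space is essentially constant, there is a fixed closed set $\mathcal C\subseteq W$ with $\overline{\mathsf{Dir}(x)}=\mathcal C$ for $\mu$-a.e.\ $x$; and $\mathcal C\neq\emptyset$ because $\mu$ is supported on $\R^*(\inte\fa^+)=\{x:\mathsf{Dir}(x)\neq\emptyset\}$. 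Thus for $\mu$-a.e.\ $x$ the set $\mathsf{Dir}(x)$ is a non-empty subset of $W$ dense in the fixed non-empty closed set $\mathcal C$.

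The crucial remaining point is to pass from this to the existence of a \emph{single} direction lying in $\mathsf{Dir}(x)$ for $\mu$-a.e.\ $x$, and for this I would establish the structural fact that $\mathsf{Dir}(x)$ is a convex subcone of $\inte\fa^+$ for every $x$ — i.e.\ if the forward orbit of $\pi(x)$ recurs in directions $\v_1$ and $\v_2$ then it recurs in every direction of $\br_{>0}\v_1+\br_{>0}\v_2$ — which I expect to be the main obstacle. It should come out of translating recurrence into the geometry of $G/K$: $\pi(x)$ recurs in direction $\v$ exactly when the ray $\{g\exp(t\v)K:t\ge0\}$ inside the flat $\{g\exp(s)K:s\in\fa\}$ passes within bounded distance of infinitely many $\Ga_0$-translates of the basepoint; the "flat positions" of consecutive such translates along a recurrence sequence differ by $O(1)$, which already gives connectedness of $\mathsf{Dir}(x)$, and one should be able to promote this to convexity by concatenating the recurrence data in the directions $\v_1$ and $\v_2$ (using that $\Ga_0$ is a group). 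Granting convexity, for $\mu$-a.e.\ $x$ the set $\mathsf{Dir}(x)$ is convex and dense in the convex closed set $\mathcal C$, so by the standard identity $\op{relint}\mathsf{Dir}(x)=\op{relint}\overline{\mathsf{Dir}(x)}=\op{relint}\mathcal C$ for convex sets we obtain $\mathsf{Dir}(x)\supseteq\op{relint}\mathcal C$ for $\mu$-a.e.\ $x$, with $\op{relint}\mathcal C\neq\emptyset$. Fixing any $[\v_0]\in\op{relint}\mathcal C$ then yields $x\in\R^*_{\v_0}$ for $\mu$-a.e.\ $x$, that is $\R^*_{\v_0}$ is $\mu$-conull, so $B\neq\emptyset$ and $\mu$ is supported on $\R^*_{\v_0}$, as desired; everything other than the convexity input is routine descriptive set theory together with the ergodicity of $\mu$.
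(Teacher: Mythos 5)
Your descriptive-set-theoretic scaffolding is essentially sound, and you correctly sense the measurability subtlety that the paper handles with the Measurable Projection Theorem: the sets $\{x:\mathsf{Dir}(x)\cap U\ne\emptyset\}=\R^*(U)$ are projections of a Borel set, hence a priori only analytic, so your map $\Phi$ into the Effros Borel space is only universally measurable rather than Borel (this is fine for the ergodicity argument, but it deserves an explicit citation, not the phrase ``the Borel-ness of $E$ renders $\Phi$ $\mu$-measurable''). A second, minor and fixable, imprecision: with $\Omega_m$ closed, the set of return times to $\pi^{-1}(\Omega_m)$ is closed and may miss $\Q$ entirely, so your formula for $E$ should interleave $\Omega_m$ with $\op{int}\Omega_{m+1}$. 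The genuine gap is exactly where you flag it: the convexity of $\mathsf{Dir}(x)$ is neither proved nor plausible for a general Zariski dense discrete $\Ga_0$, and without some such structural input the conclusion that $\mathsf{Dir}(x)$ is dense in a fixed closed set $\mathcal C$ for a.e.\ $x$ yields nothing, since these dense subsets may vary with $x$ and have empty common intersection. Your concatenation heuristic fails for two independent reasons. First, the Cartan projection is only subadditive; approximate additivity $\mu(\gamma\delta)\approx\mu(\gamma)+\mu(\delta)$ requires the relevant flags to be in general position, a hypothesis available for Schottky or Anosov subgroups but not for the arbitrary discrete $\Ga_0$ of the lemma. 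Second, even granting additivity, recurrence in direction $\v_1$ produces an unbounded but otherwise uncontrolled set of admissible times $\{t_i\}$, and similarly $\{s_j\}$ for $\v_2$; to land near a prescribed intermediate ray you must realize a prescribed ratio $s_j/t_i$, which is impossible when the gaps between consecutive admissible times are arbitrarily long. So ``convexity of $\mathsf{Dir}(x)$'' is not a deferred technicality: it is the entire difficulty, and reducing the lemma to it is not a proof.

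The paper's argument is structured precisely so as to avoid any claim about the individual fibers $\mathsf{Dir}(x)$. It applies the $N$-invariance and the null/conull dichotomy not to the slices $\R^*_\v$ but to the unions $\R^*(U)$ over a nested shrinking sequence of basic open balls $U$ in the unit sphere $S$ of $\inte\fa^+$ (each such union being $N$-invariant and universally measurable by the Measurable Projection Theorem), and then extracts $\v$ as the unique point in the intersection of the balls, identifying $\R^*_\v$ with the conull set $\bigcap_k\R^*(U_{k,i_k})$. If you wish to salvage your route, the input you actually need is not convexity but a closedness/stability statement for $\mathsf{Dir}(x)$ under limits of directions, and you should prove that directly rather than routing through the Effros space and relative interiors.
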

	
	\begin{proof} 
	For any subset $U\subset \inte \fa^+$, we set $$\R^*(U):=\cup_{\u\in U}\cal R^*_\u\subset \Ga\ba G .$$
Note that $\R^*(U)$ is $N$-invariant, since $\R^*_\u$ itself is $N$-invariant for each $\u\in \inte\fa^+$.
Note that $ \R^*(\inte \fa^+)=\bigcup_{\u\in S} \R^*_{\u}$ where $S:=\{\u\in \inte\fa^+: \|\u\|=1\}$. 
	Let $ (\Gamma \backslash G, \mathcal{A}, \mu) $ be the completion of the measure space $ (\Gamma \backslash G, \mathcal{B}, \mu) $, where $ \mathcal{B} $ is the Borel $ \sigma $-algebra of $ \Gamma \backslash G $. 

	\begin{claim*}
		For any open set $ U \subset S $, the set $ \R^*(U)$ belongs to
		$\mathcal{A} $ and is either $ \mu $-null or co-null. 
	\end{claim*}
	
	Given $ U $, denote $ X_U= \Gamma \backslash G \times U $ equipped with the product $ \sigma $-algebra $ \mathcal{B} \otimes \mathcal{B}_U $ with respect to the Borel $ \sigma $-algebras on $ \Gamma \backslash G $ and $ U $. Define the function $ \psi: X_U \to [0,\infty] $ by
	\[ \psi(x,\u) = \liminf_{t\to \infty} d_{\Gamma \backslash G}(x,x \exp ({t}\u)), \]
	where $ d_{\Gamma \backslash G} $ is the metric induced from
	the left-invariant metric on $G $. The function $ \psi $ is clearly $ \mathcal{B} \otimes \mathcal{B}_U $-measurable and therefore so is the set $ W:=\psi^{-1}\left( [0,\infty) \right) $. Note that $ \R^*(U)=\pi_{\Gamma \backslash G}(W) $ is the image of
	$W$ under the projection map $\pi_{\Ga\ba G}: X_U\to \Ga\ba G$. We would have liked to conclude that $ \R^*(U) $ is itself Borel measurable but this might not be true. Fortunately, we have the following Measurable Projection Theorem  \cite[III.23]{CV}:\medskip
	
	{\centering \emph{Let $ (Y,\mathcal{F}) $ be a measure space and let $ (U,\mathcal{B}_U) $ be a Polish space, i.e.~a separable completely metrizable space, together with its Borel $ \sigma $-algebra. Let $ X=Y\times U $ together with $ \mathcal{F}\otimes \mathcal{B}_U $ be the product measure space. Then for any set $ W \in \mathcal{F}\otimes \mathcal{B}_U $, the projection $\pi_Y(W)\subset Y$ is universally measurable, that is, $ \pi_Y(W) $ is contained in the completion of $ \mathcal{F} $ with respect to any probability measure $ \nu $ on $ (Y,\mathcal{F}) $.}}
	\medskip
	
	The space $ U $ is clearly Polish whenever $ U $ is open in $ S $. 
	Since $\mu$ is equivalent to a probability measure, say, $fd\mu $ for some $ 0 < f \in L^1(\mu) $ of norm$=1$, this theorem implies that $ \R^*(U)=\pi_{\Gamma \backslash G}(W) \in \mathcal{A} $. By the properties of the completion $ \sigma $-algebra, there exist Borel measurable sets $ Q_1 \subset \R^*(U) \subset Q_2 $ satisfying $ \mu (Q_2 - Q_1)=0 $. Since $ \R^*(U) $ is $ N $-invariant we have
	\[ Q_1 N \subset \R^*(U)N = \R^*(U) \subset Q_2 \]
	and hence $ \mu(Q_1 \Delta Q_1 N) = 0 $, where $ \Delta $ denotes symmetric difference. By ergodicity, this implies that $ Q_1 $, and hence also $ \R^*(U) $, are either $ \mu $-null or co-null, proving the claim.
	\medskip
	
	Now take a countable basis $\{U_{1,i}\} $ of $S$ consisting of open balls of diameter at  most $1/2$. By the claim above, the sets $\R^*(U_{1,i})$ are either $\mu$-null or co-null. Since $\mu$ is supported on 
	\[ \R^*(\inte \fa^+)=\R^*(S) = \bigcup_{i \geq 1} \R^*(U_{1,i}), \]
	there exists some $i_1$ for which
	$\R^*(U_{1,i_1})$ is co-null. Take a countable basis $\{U_{2,i}\}$ of $U_{1,i_1}$ consisting of open balls of diameter at most $1/4$. Then there exists $U_{2,i_2} \subset U_{1,i_1} $ for which $\R^*(U_{2,i_2})$ is co-null. Continuing inductively,  we get a decreasing
	sequence of balls $U_{1,i_1}
	\supset U_{2,i_2}\supset \cdots$ of diameters
	$\op{diam}U_{k, i_k}\le 2^{-k}$ satisfying that $\R^*(U_{k, i_k})$ are $\mu$-co-null.
	Hence $\bigcap_k U_{k, i_k}=\{\v\}$ for some $\v\in S$ and $\R^*_\v=\bigcap_k \R^*(U_{k, i_k})$ is co-null for $\mu$.
\end{proof}

\section{Unique ergodicity and Anosov groups} 
We begin by recalling the definition of Burger-Roblin measures given in \cite{ELO}. Let $\G$ be a Zariski dense discrete subgroup of a connected semisimple real algebraic group $G$. Denote by $\psi_\Gamma:\fa\to \br\cup\{-\infty\}$ the growth indicator function of $\Gamma$ defined by
Quint \cite{Quint2}. Let $\psi$ be a linear form on $\fa$ and $\nu$ a $(\Gamma, \psi)$-conformal measure supported on the limit set $\La$. This implies $\psi\ge \psi_\Ga$ \cite[Thm. 1.2]{Quint2}. When the rank of $G$ is one, $\psi$ is simply a real number and $\psi_\Ga$ is equal to the critical exponent of $\G$.
The Burger-Roblin measure $\m_{\nu}^{\BR}$ associated to $\nu$ is the $MN$-invariant Borel measure on $\Ga \ba G$ which is induced from the following
measure $\tilde \m^{\BR}_{\nu}$ on $G/M$:
 using the Hopf parametrization $G/M=\cal F^{(2)}\times \fa$ given by $gM\to (g^+, g^-, \beta_{g^+}(e,g))$,
\begin{equation}\label{eq.BR}
d\tilde \m^{\BR}_{\nu} (g)=e^{\psi (\beta_{g^+}(e, g))+ 2\rho ( \beta_{g^-} (e, g )) } \;  d\nu (g^+) dm_o(g^-) db,
\end{equation}
  where $db $ is the Lebesgue measure on $\frak a$, $m_o$ is the $K$-invariant probability measure on $\cal F$ and
$\beta_{g^+}(e,g)\in \fa$ and  $\beta_{g^-}(e,g) \in \fa$ are respectively given by
the conditions 
$$g\in K \exp (\beta_{g^+}(e,g))N\quad\text{and}\quad g\in K\exp (\op{Ad}_{w_0}(\beta_{g^-}(e,g)))N^+.$$

Now, let $\Ga$ be an Anosov subgroup of $G$, as defined in the introduction. 
For each  $\v\in \inte\L_\Ga$, there exist a unique linear form $\psi_\v\in \fa^*$ such that
$\psi_\v\ge \psi_\Gamma$ and $\psi_{\mathsf v}(\v)=\psi_\Gamma(\v)$ and a unique
$(\Gamma, \psi_\v)$-conformal probability measure supported on the limit set $\La$, which we denote by $\nu_\v$ (see \cite{Sa} and
\cite[Theorem 7.9]{ELO}). 
We set 
\be\label{brbr} \m_\v^{\BR}:=\m_{\nu_\v}^{\BR}.\ee
Note that if $\br \v=\br \u$, then $\psi_\u=\psi_\v$ and hence $\m_\v^{\BR}=\m_\u^{\BR}$.

 We recall the following result of Lee and Oh, which is based on their classification of
$\G$-conformal measures on $\La$ \cite[Thm. 7.7]{LO1}:
\begin{thm} \cite[Prop. 7.2]{LO2} \label{class} Any $N$-invariant ergodic and $P^\circ$-quasi invariant measure on $\cal E$ is of the form 
$\m_\v^{\BR}|_Y$ for some $ \v\in \inte\L_\Ga$ and some $\pc$-minimal subset $Y\subset \G\ba G$, up to proportionality.
\end{thm}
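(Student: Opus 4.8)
\emph{Proof proposal.} The strategy is the classical dictionary between $N$-invariant, $\pc$-quasi-invariant measures on $\Ga\ba G$ and $\Ga$-conformal densities on the limit set, combined with the classification of such densities available for Anosov groups. First I would lift $\mu$ to a left-$\Ga$-invariant, right-$N$-invariant, right-$\pc$-quasi-invariant Radon measure $\tilde\mu$ on $G$. Via the principal $N$-bundle $G\to G/N$, the measure $\tilde\mu$ corresponds to a measure $\tilde\mu_0$ on $G/N$, with $\tilde\mu$ recovered as $\tilde\mu_0$ times Haar along the $N$-fibres. Now $G/N$ fibres $\Ga$-equivariantly over $\F=G/P$ with fibre $MA$, and $\pc=MAN$ acts on $G/N$ through the $MA$-part of the fibre; since $\mu$ is $\pc$-quasi-invariant, $\tilde\mu_0$ pushes forward to a $\Ga$-quasi-invariant measure $\hat\mu$ on $\F$, supported on $\La$ because $\mu$ is supported on $\E$ (and, since $[P:\pc]<\infty$, this pushforward is finite-to-one on supports).

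\emph{Identifying $\hat\mu$.} The conditional measures of $\tilde\mu_0$ along the $MA$-fibres are $A$- and $M^\circ$-quasi-invariant, and the corresponding Radon--Nikodym derivative gives a measurable homomorphism $A\to\br_{>0}$, which by $N$-ergodicity of $\mu$ must be $a\mapsto e^{\psi(\log a)}$ for a genuine linear form $\psi\in\fa^*$. Feeding this together with the $\Ga$-equivariance back through the disintegration exhibits $\hat\mu$ (up to proportionality) as a $(\Ga,\psi)$-conformal measure on $\La$. At this point the classification \cite[Thm.~7.7]{LO1} --- the substantive input --- applies: for $\Ga$ Anosov, a $(\Ga,\psi)$-conformal measure on $\La$ exists only when $\psi=\psi_\v$ is the linear form tangent to the growth indicator $\psi_\Ga$ at some $\v\in\inte\L_\Ga$, and then it is unique, equal to $\nu_\v$. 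Hence $\psi=\psi_\v$ and $\hat\mu\propto\nu_\v$.

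\emph{Reconstruction.} Knowing the $\F$-marginal ($\propto\nu_\v$), the $A$-cocycle ($e^{\psi_\v}$), and that $\tilde\mu$ is $N$-invariant determines the measure class of $\tilde\mu$: along the $g^-$-coordinate the $N$-invariance forces the conditional measure to be equivalent to $m_o$ --- in fact proportional to $e^{2\rho(\beta_{g^-}(e,g))}\,dm_o(g^-)$, the unique $N$-invariant family, $2\rho$ being the Jacobian exponent of $A$-conjugation on $N$ --- and along $\fa$ the marginal must be equivalent to Lebesgue, since otherwise some $A$-translate of $\mu$ would be singular to $\mu$, contradicting local finiteness and $A$-quasi-invariance. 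Comparing with \eqref{eq.BR}, $\tilde\mu$ lies in the measure class of the restriction of $\tilde\m^{\BR}_\v$ to $\op{supp}\tilde\mu$. Finally $\op{supp}\mu$ is a closed $\pc$-invariant, hence $N$-invariant, subset of $\E$, so it is a union of $\pc$-minimal sets; $N$-ergodicity forces it to be a single one, say $Y$. As $\mu$ and $\m_\v^{\BR}|_Y$ are both $N$-invariant and mutually absolutely continuous, their Radon--Nikodym derivative is $N$-invariant and hence $\mu$-a.e.\ constant, so $\mu$ is proportional to $\m_\v^{\BR}|_Y$.

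\emph{Main obstacle.} The crux is the input \cite[Thm.~7.7]{LO1}: the existence-and-uniqueness of conformal measures on $\La$ is precisely where the Anosov property is indispensable, resting on the limit set being a regular, topologically tame, $\Ga$-equivariant image of the Gromov boundary, on uniform contraction and expansion along flats, and on a shadow lemma with matching upper and lower bounds. Of the remaining steps, the two delicate points are (i) upgrading ``the Radon--Nikodym cocycle agrees almost everywhere with a homomorphism'' to genuine linearity of $\psi$, which needs ergodicity together with a careful choice of cocycle representative, and (ii) excluding degeneracy of the $\fa$-marginal, again via ergodicity and local finiteness through a Fubini argument along $NA$-orbits.
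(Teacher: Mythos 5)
The paper does not actually prove Theorem \ref{class}: it is imported verbatim from \cite[Prop.~7.2]{LO2}, and the proof there runs exactly along the lines you describe --- the correspondence between $N$-invariant, $P^\circ$-quasi-invariant measures on $\E$ and $\Gamma$-conformal densities on $\La$ (disintegration over $G/N\to G/P$, with the $A$-Radon--Nikodym cocycle reduced to a character $e^{\psi}$ by $N$-ergodicity), followed by the classification \cite[Thm.~7.7]{LO1} forcing $\psi=\psi_\v$ and the boundary measure to be $\nu_\v$, and finally the decomposition of $\m_\v^{\BR}$ over the finitely many $P^\circ$-minimal subsets of $\E$. Your outline is therefore correct in strategy and identifies the right key input; the one step I would flag as glossed over is the passage from $G/N$ to $\F$, where the fibre $MA$ is noncompact, so one cannot literally push $\tilde\mu_0$ forward (the image measure could fail to be Radon) but must first use the $MA$-quasi-invariance character, constant by $N$-ergodicity, to exhibit the product structure along the fibres --- which is precisely the content of the correspondence theorem established in \cite{LO1}.
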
 
Indeed in \cite{LO1}, it
was also shown that each  $\m_\v^{\BR}|_Y$ in the above theorem is $N$-ergodic; however we will not need this result.

For $\v\in \inte \fa^+$, set
$$\R_\v:=\{x\in\E: 
\limsup_{t\to +\infty} x\exp t\v \ne \emptyset\}.$$ 

We also recall the following recent result obtained by Burger, Landersberg, Lee and Oh:
\begin{thm}\label{bb2} \cite{BLLO} Let $\v\in \inte\L_\Ga$ and $\u\in \inte\fa^+$.
\begin{itemize}
\item If $\op{rank} G\le 3$, then  $\m^{\BR}_\v(\Ga\ba G -\R_\v)=0$.
\item If $\op{rank}G>3$ or $\br \u\ne \br \v$, then $\m^{\BR}_\v(\R_\u)=0$.
\end{itemize}

\end{thm}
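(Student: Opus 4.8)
The statement is quoted from \cite{BLLO}; here is the argument I would give. The plan is to establish a \emph{directional Hopf--Tsuji--Sullivan dichotomy} for the one-parameter flow $\{\exp(t\v)\}$ and then decide which side of it occurs for Anosov subgroups by a sharp orbit-counting estimate. The first step is to recast recurrence geometrically: in the Hopf parametrization $G/M=\F^{(2)}\times\fa$ of \eqref{eq.BR}, the forward orbit of $x=[g]$ under $\exp(t\v)$ merely translates the $\fa$-coordinate by $t\v$ while fixing $(g^+,g^-)$, so $x\in\R_\v$ if and only if $g^+$ lies in the \emph{$\v$-conical limit set} $\Lambda_\v\subseteq\Lambda$, consisting of those $\xi\in\Lambda$ approached by a sequence $\gamma_i\in\Gamma$ along the direction $\v$ -- i.e.\ with $\gamma_i\cdot o\to\xi$ (for $o=eK$) and the Cartan projection $\op{Cart}\gamma_i\in\fa^+$ (defined by $\gamma_i\in K\exp(\op{Cart}\gamma_i)K$) within a fixed distance $R$ of the ray $\br_+\v$. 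This condition does not see $g^-$ or the $\fa$-coordinate, and $\m_\v^{\BR}$ disintegrates over $g^+$ as $d\nu_\v(g^+)\,dm_o(g^-)\,db$ with the conformal weight; so by Fubini both bullets reduce to statements about the conformal measure $\nu_\v$: that $\nu_\v(\Lambda\setminus\Lambda_\v)=0$ in the first case, and $\nu_\v(\Lambda_\u)=0$ whenever $\r>3$ or $\br\u\neq\br\v$.

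To obtain the dichotomy I would run Borel--Cantelli against the shadow lemma. By Quint's shadow lemma for the $(\Gamma,\psi_\v)$-conformal measure (compare \cite{LO1}), $\nu_\v$ of the shadow $\mathcal O_R(\gamma\cdot o)$ is comparable to $e^{-\psi_\v(\op{Cart}\gamma)}$ uniformly over $\gamma$ with $\op{Cart}\gamma$ within bounded distance of $\br_+\u$, and $\Lambda_\u$ is, up to the choice of $R$, the $\limsup$ of these shadows. The convergent half of Borel--Cantelli then gives: if
\[
\Sigma_\u:=\sum_{\gamma\in\Gamma:\ d(\op{Cart}\gamma,\,\br_+\u)\le R}e^{-\psi_\v(\op{Cart}\gamma)}<\infty,
\]
then $\nu_\v(\Lambda_\u)=0$. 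In the divergent case $\Sigma_\v=\infty$ one upgrades to $\nu_\v(\Lambda\setminus\Lambda_\v)=0$ by proving enough quasi-independence of the shadows $\mathcal O_R(\gamma\cdot o)$ along the $\v$-ray -- this is the higher-rank analogue of the Hopf--Tsuji--Sullivan circle of ideas, and it also identifies conservativity versus dissipativity of the $\exp(\br_+\v)$-flow on $(\Ga\ba G,\m_\v^{\BMS})$.

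It remains to evaluate $\Sigma_\u$ for an Anosov subgroup, which is where the fine asymptotics of orbit counting enter. Since $\psi_\v$ is the tangent linear form to the growth indicator $\psi_\Gamma$ at $\v$, with $\psi_\v(\v)=\psi_\Gamma(\v)$, critical exponent $1$, and $\psi_\Gamma$ analytic and strictly concave near the interior direction $\v$ (thermodynamic formalism for Anosov representations; Sambarino, Quint), a local limit theorem gives
\[
\#\{\gamma\in\Gamma:\ d(\op{Cart}\gamma,\br_+\v)\le R,\ \psi_\v(\op{Cart}\gamma)\in[n,n+1]\}\ \asymp\ \frac{e^{n}}{n^{(\r-1)/2}},
\]
where the exponent $(\r-1)/2$ is the transverse Gaussian correction coming from the $(\r-1)$-dimensional level sets of $\psi_\v$. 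Hence $\Sigma_\v\asymp\sum_n n^{-(\r-1)/2}$, which diverges exactly when $\r\le 3$; this yields the first bullet and the rank-$>3$ half of the second. For $\br\u\neq\br\v$ one has $\psi_\v>\psi_\Gamma$ strictly on the ray $\br_+\u$ (equality holds only along $\br\v$), so the same count in the $\u$-tube at level $\psi_\v\in[n,n+1]$ is $\ll e^{-cn}e^{n}n^{-(\r-1)/2}$ for some $c>0$; thus $\Sigma_\u<\infty$ and $\m_\v^{\BR}(\R_\u)=0$ (if $\u\notin\L_\Gamma$ then $\R_\u=\emptyset$ and there is nothing to prove).

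The main obstacle is this last step together with the divergent half of Borel--Cantelli: one needs the sharp $n^{-(\r-1)/2}$ local limit asymptotics for Cartan projections of Anosov subgroups -- resting on transfer-operator/thermodynamic methods, analyticity and strict concavity of $\psi_\Gamma$, and a central limit theorem for $\psi_\v\circ\op{Cart}$ along $\Gamma$ -- together with a sufficiently strong decorrelation of the shadows along the $\v$-ray to turn $\Sigma_\v=\infty$ into full conical measure. By contrast the geometric reformulation of the first paragraph and the Borel--Cantelli bookkeeping are comparatively routine; the genuinely hard input is this quantitative higher-rank Hopf--Tsuji--Sullivan package.
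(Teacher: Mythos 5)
This theorem is imported from \cite{BLLO} and the present paper gives no proof of its own, so there is nothing internal to compare against; your outline is, however, a faithful reconstruction of the strategy of that reference: a directional Hopf--Tsuji--Sullivan dichotomy reducing both bullets, via the Hopf parametrization and the shadow lemma, to convergence or divergence of the $\psi_\v$-weighted series over a tube around $\br_+\u$, which is then settled by the local limit theorem for Cartan projections with its transverse Gaussian correction $n^{-(\r-1)/2}$, giving divergence exactly when $\r\le 3$ and $\br\u=\br\v$. Be aware that the two steps you yourself flag --- quasi-independence of the shadows needed for the divergence-type half, and the local limit theorem for Anosov groups --- are precisely where the substance of \cite{BLLO} lies, so what you have written is an accurate roadmap of the cited proof rather than a self-contained argument.
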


\subsection*{Proof of Theorem \ref{m1}} Let $\mu$ be an $N$-invariant measure supported on $\R_\u$
for some $\u\in \inte\fa^+$. In view of the ergodic decomposition, we may assume without loss of generality that $\mu$ is ergodic.  By Proposition \ref{pm1}, $\mu$ is $P$ quasi-invariant. Since $P=P^\circ$ under the hypothesis that none of $G_i$ is isomorphic to $\op{SL}_2(\br)$, it follows from Theorem \ref{class} that $\mu=\m_{\v}^{\BR}$ for some $\v\in \inte \L_\Ga$.  By Theorem \ref{bb2},
this implies that $\op{rank} G\le 3$ and $\br \v=\br \u$ and hence $\u\in \inte\L_\Ga$; in other cases,
 such $\mu$ cannot exist.
This proves the claim.

 \subsection*{Proof of Corollary \ref{m2}}
 By Corollary \ref{normal2}, 
 any $N$-invariant ergodic measure supported on $\R$ is supported on $\R_\u$ for some $\u\in \inte\fa^+$. Hence the claim follows from Theorem \ref{m1}.

\end{document}